\newcommand{\drivy}{pdftex}
\newcommand{\drivy}{dvips}
\definecolor{orange}{rgb}{1,0.5,0}
\numberwithin{equation}{section}
\newtheorem{thm}{Theorem}[section]
\newtheorem{cor}[thm]{Corollary}
\newtheorem{lem}[thm]{Lemma}
\theoremstyle{definition}
\newtheorem{remark}[thm]{Remark}
\newtheorem{assumption}{Assumption}
\newtheorem{example}{Example}[section]
\newtheorem{algo}[thm]{Algorithm}
\theoremstyle{remark}
\newcommand{\norm}[1]{\left\Vert#1\right\Vert}
\newcommand{\abs}[1]{\left\vert#1\right\vert}
\newcommand{\Real}{\mathbb R}
\newcommand{\Rplus}{\mathbb {R}_+}
\newcommand{\moment}{\mu}
\DeclareMathOperator{\diag}{diag}
\newcommand{\brac}[1]{\left ( #1 \right )}
\newcommand{\brak}[1]{\left [ #1 \right ]}
\newcommand{\brat}[1]{\left \{ #1 \right \}}
\newcommand{\Qmeas}{\mathbb{Q}}
\newcommand{\Pmeas}{\mathbb{P}}
\newcommand{\LS}{\mathcal{L}}
\newcommand{\ev}[1]{\mathbb{E}\brak{#1}}
\newcommand{\evj}{\mathbb{E}^{(J)}}
\newcommand{\vr}[1]{\mathbb{V}\brak{#1}}
\newcommand{\evt}[2]{\mathbb{E}_{#1}\brak{#2}}
\newcommand{\evtj}[2]{\mathbb{E}^{(J)}_{#1}\brak{#2}}
\newcommand{\bracket}[1]{\left \langle #1 \right \rangle }
\newcommand{\domain}{\mathcal{D}}
\newcommand{\reals}{\mathbb{R}}
\newcommand{\e}{\ensuremath{\mathrm{e\;\!}}}
\newcommand{\iim}{\ensuremath{\mathrm{i}}} 
\newcolumntype{d}{D{.}{.}{3}}
\title[Density Expansions]{Density Approximations for Multivariate Affine Jump-Diffusion Processes}
\author{Damir Filipovi\'{c}$^1$}
\address{$^1$\'Ecole Polytechnique F\'ed\'erale de Lausanne and Swiss Finance Institute, Quartier UNIL-Dorigny, Extranef 218, CH - 1015 Lausanne, Switzerland}
\email{\href{mailto:damir.filipovic@epfl.ch}{\texttt{damir.filipovic@epfl.ch}}}
\author{Eberhard Mayerhofer$^2$}
\address{$^2$Vienna Institute of Finance, Heiligenst\"{a}dter Str. 46-48, 1190 Vienna, Austria}
\email{\href{mailto:eberhard.mayerhofer@vif.ac.at}{\texttt{eberhard.mayerhofer@vif.ac.at}}}
\author{Paul Schneider$^3$}
\address{$^3$Warwick Business School, University of
Warwick, Coventry CV4 7AL, United Kingdom}
\email{\href{mailto:paul.schneider@wbs.ac.uk}{\texttt{paul.schneider@wbs.ac.uk}}}
\thanks{We are thankful to Yacine A\"{i}t-Sahalia, Michael Brandt, Anna Cieslak, Pierre Collin-Dufresne, Valentina Corradi, Ron Gallant, Aleksandar Mijatovi\'{c}, Alessandro Palandri, Benedikt Pötscher, and Gareth Roberts for helpful discussions. We benefitted from suggestions from participants of the Workshop on Financial Econometrics at the Fields institute, Toronto, the internal workshop at Warwick Business School, Coventry, the Econometric Research Seminar at the IHS, Vienna, and the 2010 meeting of the European Finance Association, Frankfurt. Part of this research has been carried out within the project on "Dynamic Asset Pricing" of the National Centre of Competence in Research "Financial Valuation and Risk Management" (NCCR FINRISK). The NCCR FINRISK is a research instrument of the Swiss National Science Foundation. Eberhard Mayerhofer gratefully acknowledges support from WWTF (Vienna Science and Technology Fund).}
\keywords{Affine Processes, Asymptotic Expansion, Density Approximation, Orthogonal Polynomials}
\date{13 October 2011}
\begin{document}
%
%
%
%

\begin{abstract}

We introduce closed-form transition density expansions for
multivariate affine jump-diffusion processes.  The expansions
rely on a general approximation theory which we develop in weighted Hilbert spaces
for random variables which possess all polynomial moments. We
establish parametric conditions which guarantee existence and differentiability of transition densities of affine models and
show how they naturally fit into the approximation framework.
Empirical applications in credit risk, likelihood inference, and
option pricing highlight the usefulness of our expansions. The
approximations are extremely fast to evaluate, and they perform very
accurately and numerically stable.

\end{abstract}
\maketitle


\section{Introduction}

Most observed phenomena in financial markets are inherently
multivariate: stochastic trends, stochastic volatility, and the
leverage effect in equity markets are well-known examples. The
theory of affine processes provides multivariate stochastic models
with a well established theoretical basis and sufficient degree of
tractability to model such empirical attributes. They enjoy much
attention and are widely used in practice and academia. Among their
best-known proponents are Vasicek's interest rate model
\citep[][]{vasicek77}, the square-root model \citet{cir85}, Heston's
model \citep[cf.][]{Heston1993}, and affine term structure models
\citep{duffiekan96,daisingleton00,collindufresnegoldsteinjones06}.
Affine models owe their popularity and their name to their key
defining property: their characteristic function is of exponential
affine form and can be computed by solving a system of generalized
Riccati differential equations (cf.\ \citet{duffiefilipovicschachermayer03}). This allows for computing
transition densities and transition probabilities by means of
Fourier inversion \citep{duffiepansingleton00}. Transition densities
constitute the likelihood which is an  ingredient for both
frequentist and Bayesian econometric methodologies.\footnote{Various
other approaches for parameter estimation for discretely observed
Markov processes can be found in the literature \citep[excellent
comprehensive surveys are for example
in][]{hurnjeismanlindsay05,sorensen,aitsahalia06}.  The approaches
range from likelihood approximation using Bayesian data augmentation
\citep{robertsstramer01,elerianchibshephard01, eraker01,jones98},
estimating functions \citep{bibbyjacobsensorensen04},  up to the
efficient method of moment \citet{gallanttauchen02}. Only few of
them make use of the properties of affine models, however
\citep[e.g.][]{singleton01,bates2005}.} Also, they appear in the
pricing of financial derivatives. However, Fourier inversion is a
very delicate task. Complexity and numerical difficulties increase
with the dimensionality of the process. Efficient density
approximations avoiding the need for Fourier inversion are therefore
desirable.

This  paper is concerned with directly approximating the transition
density  without resorting
to Fourier inversion techniques. We pursue a polynomial
expansion approach, an idea that has been proposed by \citet{schoutens00},
\citet{aitsahalia02} and \citet{hurnjeismanlindsay08} among others for univariate
diffusion processes. Extensions for multivariate (jump-)diffusions
do exist in \citet{aitsahalia08} and \citet{yu07}, but they follow a
different route by approximating the Kolmogorov forward-, and
backward partial differential equations.  Our approach  exploits
a crucial property of affine processes. Under some technical
conditions, conditional moments of all orders exist and are explicitly given in terms of derivatives of the affine characteristic exponential function, see \citet{duffiepansingleton00}. This
ensures that the coefficients of the polynomial expansions can be
computed without approximation error.


We present a general theory of density approximations with several traits of the affine model class in mind.  The assumptions made for the general theory are then justified by proving existence and differentiability  of the true, unknown transition densities of affine models. These theoretical results, contrary to the density approximations themselves, do rely on Fourier theory. Specifically we investigate the asymptotic behavior of the characteristic function with novel ODE techniques.

We improve
earlier work, along
several lines. Our method (i) is
applicable to multivariate models; (ii) works equally well for
reducible and irreducible processes in the sense of
\citet{aitsahalia08}\footnote{A model is said to be \emph{reducible}
\citet{aitsahalia08} if its diffusion function can be transformed
one-to-one into a constant)}, in particular stochastic volatility
models; (iii) produces density approximations the quality of which
is independent of the time interval between observations; (iv)
allows for expansions on the ''correct" state space. That is, the
support of the density approximation agrees with the support of the
true, unknown transition density as in \citet{hurnjeismanlindsay08} and \citet{schoutens00};
(v) produces density approximations that integrate to unity by
construction, hence are much more amenable to applications that demand the constant of proportionality than the
purely polynomial expansions from \citet{aitsahalia08}.\footnote{The Markov chain Monte Carlo sampling schemes from
\citet{stramerbognarschneider09} accommodate Bayesian
likelihood-based inference using expansions from
\citet{aitsahalia08} even in absence of the normalizing constant,
but at a high computational cost.}  A specialization on affine models is not a severe limitation, since virtually any continuous-time multivariate application is based on affine models.\footnote{In discrete-time, \citet{lesingletondai10} show how $\mathbb{Q}$-affine models may be constructed to exhibit non-affine dynamics under $\Pmeas$.} This includes
Wishart processes \citet{bru91} and even general affine
matrix-valued processes \citep{cuchieroetal09}. This paper therefore provides a unified framework for  econometric inference for financial models, because in applications one typically needs to evaluate, both, the transition densities themselves, as well as integrals of payoff functions against the transition densities for model-based asset pricing. This complements the methods recently developed in \citet{chenjoslin11} and \citet{kristensenmele11}, which are aimed at asset pricing only.\footnote{It is of course conceivable to mix the mentioned methods. For example, one could use transition densities developed in this paper, while approximating asset prices using the generalized Fourier transform in \citet{chenjoslin11}, whenever the payoff function allows it, or the error expansion method from \citet{kristensenmele11}.}

The paper proceeds as follows: Section
\ref{sec:densityapproximations} develops a general theory of orthonormal polynomial density approximations in certain weighted $\LS ^{2}$
spaces--under suitable integrability and regularity assumptions.
These may be validated by the sufficient criteria presented
subsequently in Section \ref{secsuffass}. The density approximations are
then specialized within the context of affine processes: Section
\ref{sec:affinemodels} reviews the affine transform
formula and the polynomial moment formula for affine processes, which in
turn allows the aforementioned polynomial approximations. The main
theoretical contribution--constituted by fairly general results on
existence and differentiability of transition densities of affine
processes-- is elaborated in Section \ref{sec:truedensities}. In
Section \ref{sec:weightfunctions} we introduce candidate weight
functions and the Gram-Schmidt algorithm to compute orthonormal polynomial bases
corresponding to these weights, along with important examples.
Section \ref{sec:relationexistingapproximations} relates
existing techniques for density approximations to ours. An empirical
study is presented in Section \ref{sec:applications}:
applications in stochastic volatility (Section \ref{sec:hestonmodel}), credit risk (Section \ref{sec:cdopricing}),
likelihood inference (Section \ref{sec:likelihoodinference}), and
option pricing (Section \ref{sec:optionpricing}), support the
tractability and usefulness of the  likelihood expansions. Section
\ref{sec:discussion} concludes. The proofs of our main results are given in Appendices \ref{approofslemma}--\ref{proof: int cbi dens}.

In the paper we will use the following notational conventions. The nonnegative integers are denoted by $\mathbb N_0$. The length of a multi-index $\alpha=(\alpha_1,\dots,\alpha_d)\in\mathbb N_0^d$ is defined by $|\alpha| =\alpha_1+\cdots+\alpha_d$, and we write $\xi^\alpha=\xi_1^{\alpha_1} \cdots  \xi_d^{\alpha_d}$ for any $\xi\in\mathbb R^d$. The degree of a polynomial $p(x)=\sum_{|\alpha|\ge 0} p_\alpha\,x^\alpha$ in $x\in \mathbb R^d$ is defined as $\deg p(x) =\max\{|\alpha|\mid p_\alpha\neq 0\}$. For the likelihood ratio functions below we define $0/0=0$. The class of $p$-times continuously differentiable (or continuous, if $p=0$) functions on $\mathbb R^d$ is denoted by $C^p$.

\section{Density Approximations}\label{sec:densityapproximations}

Let $g$ denote a probability density on $\mathbb R^d$ whose polynomial moments
\[ \moment _\alpha =\int_{\mathbb R^d} \xi^\alpha\,g(\xi)\,d\xi\]
of every order $\alpha\in \mathbb N_0^d$ exist and are known in closed form. For example, $g$ may denote the pricing density in a financial market model. Typically, $g$ is not known in explicit form, and needs to be approximated. Let $w$ be an auxiliary probability density function on $\mathbb R^d$. The aim is to expand the likelihood ratio $g/w$ in terms of orthonormal polynomials of $w$ in order to get an explicit approximation for the unknown density function $g$. This can be formalized as follows. Define the weighted Hilbert space $\LS ^2_{w}$ as the set of (equivalence classes of) measurable functions $f$ on $\mathbb R^d$ with finite $\LS ^2_{w}$-norm defined by
\[ \| f\|_{\LS^2_w} ^2 = \int_{\mathbb R^d} |f(\xi)|^2\,w(\xi)\,d\xi<\infty .\]
Accordingly, the scalar product on $\LS^2_w$ is denoted by
\[ \langle f,h\rangle_{\LS^2_w} = \int_{\mathbb R^d} f(\xi)\,h(\xi)\,w(\xi)\,d\xi.
\]
We will now proceed under the following assumptions. Sufficient conditions for the assumptions to hold are provided in Section~\ref{secsuffass} below.
\begin{assumption}\label{ass1}
 There exists an orthonormal basis of polynomials $\{H_\alpha\mid\alpha\in\mathbb N_0^d\}$ of $\LS^2_w$ with $\deg H_\alpha = |\alpha|$. This implies $H_0=1$ in particular.
\end{assumption}

\begin{assumption}\label{ass2}
  The likelihood ratio function $g/w$ lies in $\LS^2_w$. This is equivalent to $\int_{\mathbb R^d} \frac{g(\xi)^2}{w(\xi)}\,d\xi<\infty$.
\end{assumption}
Consequently, the coefficients
\[ c_\alpha = \left\langle\frac{g}{w},H_\alpha  \right\rangle_{\LS^2_w} = \int_{\mathbb R^d} H_\alpha(\xi) \,g(\xi)\,d\xi \quad\text{($=1$ for $\alpha=0$)}\]
are well defined and given explicitly\footnote{This is an advantage over the method in \citet{aitsahalia02} which also relies on series expansions, where the coefficients are functions of expectations of nonlinear moments, and therefore have to be approximated in general.} in terms of the coefficients of $H_\alpha$ and the polynomial moments $\moment _\alpha$ of $g$. Moreover, according to standard $\LS ^2_{w}$-theory, the sequence of pseudo-likelihood ratios\footnote{See Footnote \ref{f1} below for an explanation of this terminology.} $1+\sum_{|\alpha|=1}^J c_\alpha H_\alpha$ approximates the likelihood ratio $g/w$ in $\LS ^2_{w}$ for $J\to\infty$. In fact, defining the pseudo-density functions\footnote{\label{f1}Theorem~\ref{thmgJ} below states that $g^{(J)}$ integrates to one, but $g^{(J)}$ may take negative values. Whence we shall call $g^{(J)}$ a pseudo-density function, and $g^{(J)}/w$ a pseudo-likelihood ratio.}
\begin{equation}\label{eq:truncatedexpansion}
  g^{(J)}(x) = w(x) \left(1+\sum_{|\alpha|=1}^J c_\alpha H_\alpha(x)\right)
  \end{equation}
the following properties can be established.
\begin{thm}\label{thmgJ}
The pseudo-density functions $g^{(J)}$ satisfy
\begin{gather}
  \int_{\mathbb R^d}g^{(J)}(\xi)\,d\xi =1\label{lemgJ1}\\
  \lim_{J\to \infty} \frac{g^{(J)}}{w}=\frac{g}{w}\quad \text{in $\LS ^2_{w}$}\label{lemgJ2}\\
  \lim_{J\to \infty}\int_{\mathbb R^d}\left| g^{(J)}(\xi)-g(\xi)\right|^2 \frac{d\xi}{w(\xi)}=0\label{lemgJ3}.
\end{gather}
\end{thm}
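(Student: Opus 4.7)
My plan is to derive all three conclusions as essentially immediate consequences of orthonormality and of the abstract fact that in a separable Hilbert space the partial Fourier sums with respect to an orthonormal basis converge to the expanded element in norm.

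For \eqref{lemgJ1}, I would integrate \eqref{eq:truncatedexpansion} term by term (the sum is finite, so no convergence issue). Since $H_0\equiv 1$ by Assumption~\ref{ass1}, the orthonormality relation gives $\int_{\mathbb R^d} H_\alpha(\xi)\,w(\xi)\,d\xi = \langle H_\alpha,H_0\rangle_{\LS^2_w}=\delta_{\alpha,0}$, so every summand with $|\alpha|\ge 1$ vanishes and what remains is $\int w\,d\xi=1$.

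For \eqref{lemgJ2}, I would observe that setting $c_0=1$ (which is consistent with the definition of $c_\alpha$) lets me rewrite
\[
\frac{g^{(J)}(x)}{w(x)}=\sum_{|\alpha|\le J} c_\alpha\,H_\alpha(x),
\]
with the convention $0/0=0$ making this identity valid on $\{w=0\}$ (Assumption~\ref{ass2} forces $g=0$ a.e.\ on $\{w=0\}$, since otherwise $\int g^2/w\,d\xi$ would be infinite, so $g^{(J)}/w$ may be taken to vanish there too). This is precisely the $J$-th partial sum of the Fourier expansion of $g/w\in \LS^2_w$ in the orthonormal basis $\{H_\alpha\}$, and the coefficients $c_\alpha=\langle g/w,H_\alpha\rangle_{\LS^2_w}$ are the Fourier coefficients. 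Standard Hilbert space theory (convergence of partial sums to the expanded element with respect to a complete orthonormal system) then yields the claimed $\LS^2_w$-convergence.

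Finally, \eqref{lemgJ3} is merely a rewriting of \eqref{lemgJ2}: factoring $1/w$ inside the integrand and using the same $0/0=0$ convention gives
\[
\int_{\mathbb R^d}\bigl|g^{(J)}(\xi)-g(\xi)\bigr|^2\,\frac{d\xi}{w(\xi)} = \int_{\mathbb R^d}\left|\frac{g^{(J)}(\xi)}{w(\xi)}-\frac{g(\xi)}{w(\xi)}\right|^2 w(\xi)\,d\xi = \left\|\frac{g^{(J)}}{w}-\frac{g}{w}\right\|_{\LS^2_w}^{2},
\]
which tends to zero by \eqref{lemgJ2}. The only non-routine point of the argument is the bookkeeping on the set $\{w=0\}$; this is mild, because Assumption~\ref{ass2} together with the convention $0/0=0$ immediately forces $g$ and $g^{(J)}$ to vanish there, so the integrals coincide with their restrictions to $\{w>0\}$ where all ratios are well defined.
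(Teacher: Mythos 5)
Your proof is correct and follows essentially the same route as the paper's: orthogonality of $H_\alpha$ against $H_0=1$ gives \eqref{lemgJ1}, and \eqref{lemgJ2}--\eqref{lemgJ3} are the standard Hilbert-space convergence of partial Fourier sums with respect to the orthonormal basis $\{H_\alpha\}$, which the paper simply cites as ``standard $\LS^2_w$-theory.'' Your extra bookkeeping on the set $\{w=0\}$ (using Assumption~\ref{ass2} to force $g=0$ a.e.\ there) is a minor point of care the paper glosses over, but it does not change the argument.
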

Property \eqref{lemgJ1} proves to be very useful for applications where the constant of proportionality is needed, for example option pricing and the computation of Bayes factors.
\begin{proof}
A calculation shows that
\[ \int_{\mathbb R^d}H_\alpha(\xi)\,w(\xi)\,d\xi = \bracket{H_\alpha, 1}_{\LS _w^{2}}=\bracket{H_\alpha, H_0}_{\LS _w^{2}}=0,\]
by the orthogonality of $H_\alpha$ and $H_0=1$. Hence $\int_{\mathbb R^d}  g^{(J)}(\xi)\,d\xi =    \int_{\mathbb R^d} w(\xi)\,d\xi =1$, which proves \eqref{lemgJ1}. Properties \eqref{lemgJ2} and \eqref{lemgJ3} are formal restatements of the discussion preceding the theorem.
\end{proof}

The idea of expanding the likelihood ratio function $g/w$ in orthonormal polynomials of $w$ is simple and powerful. An overview and discussion of related literature can be found e.g.\ in \citet{ber_95}. In particular, for the case where $w$ is the standard Gaussian density, \eqref{eq:truncatedexpansion} is actually the Gram--Charlier expansion of $g$. But note that Assumption~\ref{ass2} is very restrictive in this case. This is why the Gram--Charlier series diverges in most cases of interest, which is sometimes given as an argument against the use of it. However, the blame is on the choice of the Gaussian as auxiliary density.
The efficiency of the approximation \eqref{lemgJ2}, or equivalently \eqref{lemgJ3}, lies in the appropriate choice of the auxiliary density function $w$ and the corresponding orthonormal polynomials $H_\alpha$.

Here is a first result towards a good choice of $w$. The intuition is to choose $w$ as close as possible to the unknown density function
$g$, in the sense that the pseudo-likelihood ratio $g/w$ is close to one. This should be achieved if many of the coefficients $c_\alpha$, other than $c_0=1$, are equal to zero. This will also improve the numerical efficiency of the approximation as the respective orthonormal polynomials $H_\alpha$ need not be computed. Denote the polynomial moments of $w$ by
\[
\lambda_\alpha=\int_{\mathbb R^d} \xi^\alpha\, w(\xi)\,d\xi.
\]
\begin{lem}[Moment Matching Principle]\label{thm:momentmatching}
   Suppose for some $n\geq 1$, we have $\mu_\alpha=\lambda_\alpha$ for all $|\alpha|\leq n$. Then $c_\alpha = 0$ for $1\leq |\alpha|\leq n$.
  \end{lem}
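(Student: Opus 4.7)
The plan is to expand $H_\alpha$ as a polynomial, then rewrite $c_\alpha$ as a linear combination of the moments $\mu_\beta$ of $g$, and use the moment-matching hypothesis to swap $\mu_\beta$ for $\lambda_\beta$, which turns $c_\alpha$ into an integral against $w$ that vanishes by orthogonality.

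More concretely, first I would fix $\alpha$ with $1\le|\alpha|\le n$ and write $H_\alpha(\xi)=\sum_{|\beta|\le|\alpha|}a_{\alpha,\beta}\,\xi^\beta$, which is possible since $\deg H_\alpha=|\alpha|$ by Assumption~\ref{ass1}. Using the explicit formula from the excerpt,
\[
c_\alpha=\int_{\mathbb R^d}H_\alpha(\xi)\,g(\xi)\,d\xi=\sum_{|\beta|\le|\alpha|}a_{\alpha,\beta}\,\mu_\beta.
\]
All multi-indices $\beta$ appearing in this sum satisfy $|\beta|\le|\alpha|\le n$, so by hypothesis $\mu_\beta=\lambda_\beta$. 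Substituting gives
\[
c_\alpha=\sum_{|\beta|\le|\alpha|}a_{\alpha,\beta}\,\lambda_\beta=\int_{\mathbb R^d}H_\alpha(\xi)\,w(\xi)\,d\xi=\langle H_\alpha,H_0\rangle_{\LS^2_w},
\]
since $H_0=1$. By Assumption~\ref{ass1} the basis is orthonormal, so $\langle H_\alpha,H_0\rangle_{\LS^2_w}=0$ whenever $|\alpha|\ge 1$, which yields $c_\alpha=0$ as claimed.

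There is no real obstacle here: the argument is a one-line manipulation once one notices that $c_\alpha$ is a linear functional of the moments of degree at most $|\alpha|$. The only thing worth being mindful of is that the polynomial representation of $H_\alpha$ and the interchange with integration are justified by Assumption~\ref{ass2} (which ensures the moments $\mu_\beta$ exist, as they do by hypothesis) together with the fact that $w$ has all polynomial moments $\lambda_\beta$ — an assumption implicit in Assumption~\ref{ass1}, since polynomials must lie in $\LS^2_w$ for an orthonormal polynomial basis to exist.
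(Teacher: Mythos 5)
Your proof is correct and is essentially the paper's own argument: the paper likewise observes that matching moments up to order $n$ forces $\int H_\alpha\,g\,d\xi=\int H_\alpha\,w\,d\xi=\langle H_\alpha,H_0\rangle_{\LS^2_w}=0$ for $1\le|\alpha|\le n$; you merely make the intermediate polynomial expansion of $H_\alpha$ explicit.
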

\begin{proof}
The assumption implies that, for $1\le |\alpha|\le n$,
\[ c_{\alpha} = \int_{\mathbb R^d} H_\alpha(\xi) \,g(\xi)\,d\xi = \int_{\mathbb R^d} H_\alpha(\xi) \,w(\xi)\,d\xi = \bracket{H_\alpha, 1}_{\LS _w^{2}}=\bracket{H_\alpha, H_0}_{\LS _w^{2}}=0,\]
by the orthogonality of $H_\alpha$ and $H_0=1$.
\end{proof}

\section{Sufficient Conditions for Assumptions~\ref{ass1} and \ref{ass2}}\label{secsuffass}
In this section we provide sufficient conditions for Assumptions~\ref{ass1} and \ref{ass2} to hold. The proofs of the following lemmas are postponed to Appendix~\ref{approofslemma}. We first provide sufficient conditions on $w$ that guarantee that Assumption~\ref{ass1} is satisfied.
\begin{lem}\label{lempolydense}
Suppose that the density function $w$ has a finite exponential moment
\begin{equation}\label{eps0mom}
 \int_{\mathbb R^d}\e^{\epsilon_0 \|\xi\|}\,w(\xi)\,d\xi<\infty
\end{equation}
for some $\epsilon_0>0$. Then the set of polynomials is dense in $\LS^2_w$. Moreover, Assumption~\ref{ass1} is satisfied.
\end{lem}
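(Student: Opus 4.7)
My plan is to prove the two claims separately: \emph{(a)} the polynomials are dense in $\LS^2_w$, and \emph{(b)} Assumption~\ref{ass1} holds. Once \emph{(a)} is established, \emph{(b)} follows by a routine Gram--Schmidt on a degree-ordered enumeration of monomials, so essentially all the real work is in \emph{(a)}.

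For \emph{(a)}, I would use the standard duality criterion: by Hilbert-space theory, it suffices to show that every $f\in \LS^2_w$ with $\bracket{f,\xi^\alpha}_{\LS^2_w}=0$ for all $\alpha\in\mathbb N_0^d$ vanishes in $\LS^2_w$. Note first that \eqref{eps0mom} implies $\xi^\alpha\in\LS^2_w$ for every $\alpha$, since $\|\xi\|^{2|\alpha|}$ is dominated by a constant multiple of $\e^{\epsilon_0\|\xi\|}$. Given such an $f$, consider
\[
F(z):=\int_{\mathbb R^d}\e^{\bracket{z,\xi}}f(\xi)\,w(\xi)\,d\xi,\qquad z\in\mathbb C^d.
\]
By Cauchy--Schwarz,
\[
\int_{\mathbb R^d}\bigl|\e^{\bracket{z,\xi}}\bigr|\,|f(\xi)|\,w(\xi)\,d\xi\le \left(\int_{\mathbb R^d}\e^{2\bracket{\mathrm{Re}\,z,\xi}}w(\xi)\,d\xi\right)^{1/2}\|f\|_{\LS^2_w},
\]
which is finite whenever $\|\mathrm{Re}\,z\|<\epsilon_0/2$ by \eqref{eps0mom}. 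Hence $F$ is well defined and, by differentiation under the integral (justified by the same estimate on compact subsets), holomorphic on the tube $T=\{z\in\mathbb C^d:\|\mathrm{Re}\,z\|<\epsilon_0/2\}$. Its Taylor coefficients at the origin are
\[
\partial^\alpha F(0)=\int_{\mathbb R^d}\xi^\alpha f(\xi)\,w(\xi)\,d\xi=\bracket{f,\xi^\alpha}_{\LS^2_w}=0,
\]
so $F\equiv 0$ on $T$. Specializing $z=it$ with $t\in\mathbb R^d$ shows that the Fourier transform of the integrable function $fw$ (whose $L^1$-norm is bounded by $\|f\|_{\LS^2_w}$ via Cauchy--Schwarz) is identically zero, hence $fw=0$ a.e.\ by Fourier uniqueness, i.e.\ $f=0$ in $\LS^2_w$.

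For \emph{(b)}, fix any enumeration $\alpha^{(0)}=0,\alpha^{(1)},\dots$ of $\mathbb N_0^d$ with $|\alpha^{(k)}|$ nondecreasing in $k$. The monomials $\xi^{\alpha^{(k)}}$ are linearly independent in $\LS^2_w$, since any nontrivial linear relation would force a nonzero polynomial to vanish on the set $\{w>0\}$, which has positive Lebesgue measure (because $w$ integrates to one) and hence cannot be contained in the zero set of a nonzero polynomial. Applying Gram--Schmidt to $(\xi^{\alpha^{(k)}})_{k\ge 0}$ inside $\LS^2_w$ produces orthonormal polynomials $H_{\alpha^{(k)}}$, each of which equals a nonzero scalar times $\xi^{\alpha^{(k)}}$ plus a linear combination of the strictly earlier monomials $\xi^{\alpha^{(j)}}$, $j<k$. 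In particular $\deg H_{\alpha^{(k)}}=|\alpha^{(k)}|$ and $H_0=1$. Density from \emph{(a)} ensures that $\{H_\alpha\}$ is an orthonormal basis of $\LS^2_w$.

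The main obstacle is the analytic step in \emph{(a)}: verifying that $F$ is truly holomorphic on $T$ and that term-by-term integration of the exponential series is justified so that the Taylor coefficients really coincide with the moments of $fw$. Both points reduce to uniform $L^1$ bounds via Cauchy--Schwarz together with \eqref{eps0mom}, so while conceptually delicate, they are routine to make precise. Everything else --- Fourier uniqueness and the Gram--Schmidt construction --- is standard Hilbert-space machinery.
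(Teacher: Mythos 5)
Your proof is correct and follows the same overall route as the paper: density of polynomials plus Gram--Schmidt on the degree-ordered monomials (whose linear independence you justify exactly as needed, via the positive-measure support of $w$). The only difference is that where the paper simply cites Lemma~1 of the reference it invokes for the density statement (noting that strict positivity of $w$ can be dropped by reading ``$=0$'' as ``$=0$ $w(\xi)\,d\xi$-a.s.'', precisely the point you handle when passing from $fw=0$ a.e.\ to $f=0$ in $\LS^2_w$), you supply the standard self-contained argument --- orthogonal complement, holomorphy of the Fourier--Laplace transform of $fw$ on the tube $\|\mathrm{Re}\,z\|<\epsilon_0/2$, vanishing Taylor coefficients, and Fourier uniqueness --- which is exactly the argument underlying that cited lemma.
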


In applications, the auxiliary density function $w$ on $\mathbb R^d$ will often be given as product of marginal densities $w_i$ on $\mathbb R$. Hence the following modification of Lemma~\ref{lempolydense} will be useful.
\begin{lem}\label{lemprod}
Let $w_1,\dots,w_d$ be density functions on $\mathbb R$ having
finite exponential moments
\[ \int_{\mathbb R} \e^{\epsilon_i  |\xi_i |}\,w_i(\xi_i)\,d\xi_i<\infty  \]
for some $\epsilon_i>0$, $i=1,\dots, d$.
Then the product density $w(\xi)=w_1(\xi_1)\cdots w_d(\xi_d)$ on
$\mathbb R^d$ admits a finite exponential moment \eqref{eps0mom} for $\epsilon_0=\min_i\epsilon_i$. Moreover, let
$\{H^i_{j}\mid j\in\mathbb N_0\}$ denote the corresponding
orthonormal basis of polynomials of $\LS^2_{w_i}(\mathbb R)$ by
$\deg H^i_{j}=j$, for $i=1,\dots,d$, asserted by
Lemma~\ref{lempolydense}. Then
\[ H_\alpha(\xi) = H^1_{\alpha_1}(\xi_1)\cdots H^d_{\alpha_d}(\xi_d) \]
defines an orthonormal basis of polynomials of $\LS^2_w$ with $\deg H_\alpha= |\alpha|$, and Assumption~\ref{ass1} is satisfied.
\end{lem}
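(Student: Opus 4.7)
The plan has two main components. First, to establish the exponential moment bound for the product density, I would use the elementary inequality $\|\xi\|\le |\xi_1|+\cdots+|\xi_d|$ together with $\epsilon_0=\min_i\epsilon_i\le\epsilon_i$ to get
\[ \e^{\epsilon_0\|\xi\|}\le \prod_{i=1}^d \e^{\epsilon_0|\xi_i|}\le \prod_{i=1}^d \e^{\epsilon_i|\xi_i|}. \]
Since $w(\xi)=\prod_{i=1}^d w_i(\xi_i)$, Fubini--Tonelli factors the integral:
\[ \int_{\RR^d}\e^{\epsilon_0\|\xi\|}\,w(\xi)\,d\xi\le \prod_{i=1}^d\int_{\RR}\e^{\epsilon_i|\xi_i|}\,w_i(\xi_i)\,d\xi_i<\infty. \]
In particular, $w$ itself satisfies the hypothesis of Lemma~\ref{lempolydense}, so polynomials are dense in $\LS^2_w$.

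Next I would verify that the tensor-product family $H_\alpha(\xi)=H^1_{\alpha_1}(\xi_1)\cdots H^d_{\alpha_d}(\xi_d)$ is an orthonormal system in $\LS^2_w$ with $\deg H_\alpha=|\alpha|$. Orthonormality is immediate from the product form of $w$ and another application of Fubini:
\[ \bracket{H_\alpha,H_\beta}_{\LS^2_w}=\prod_{i=1}^d\bracket{H^i_{\alpha_i},H^i_{\beta_i}}_{\LS^2_{w_i}}=\prod_{i=1}^d\delta_{\alpha_i\beta_i}=\delta_{\alpha\beta}, \]
and the degree identity follows from $\deg H^i_{\alpha_i}=\alpha_i$ together with the nonvanishing of the leading term $\xi_1^{\alpha_1}\cdots\xi_d^{\alpha_d}$ in the expansion of the product.

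The remaining and only mildly subtle point is completeness. Since polynomials are dense in $\LS^2_w$ by the first step, it suffices to show that every monomial $\xi^\beta$ lies in the linear span of $\{H_\alpha\}$. The triangularity $\deg H^i_j=j$ means that, for each $i$, $\{H^i_0,\dots,H^i_{\beta_i}\}$ spans the univariate polynomials of degree at most $\beta_i$, so each $\xi_i^{\beta_i}$ is a finite linear combination of the $H^i_0,\dots,H^i_{\beta_i}$. Taking tensor products expresses $\xi^\beta$ as a finite linear combination of $\{H_\alpha\mid\alpha_i\le\beta_i\}$, which gives completeness and verifies Assumption~\ref{ass1}. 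This triangularity argument is the only step not completely automatic; everything else is a direct consequence of the product structure of $w$ and Lemma~\ref{lempolydense}.
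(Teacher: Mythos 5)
Your proposal is correct and follows essentially the same route as the paper's proof: the same elementary inequality $\|\xi\|\le\sum_i|\xi_i|$ for the exponential moment, the same factorization of the inner product for orthonormality, and the same triangularity argument expressing each monomial as a product of linear combinations of the univariate $H^i_j$ to get completeness. You merely spell out a couple of steps (Fubini--Tonelli and the reduction of completeness to density of polynomials via Lemma~\ref{lempolydense} applied to $w$) that the paper leaves implicit.
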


Assumption~\ref{ass2} is opposite to Assumption~\ref{ass1} in the sense that there we have to bound the auxiliary density function $w$ from below.
The following lemmas provide sufficient conditions for Assumption~\ref{ass2} to hold.
\begin{lem}\label{lemass2simple}
Assume that $g$ is bounded and has a finite exponential moment
\[ \int_{\mathbb R^d}\e^{\epsilon_0 \|\xi\|}\,g(\xi)\,d\xi<\infty \]
for some $\epsilon_0>0$. If $w$ decays at most exponentially such that
\begin{equation}\label{lemass2simplecond}
  \sup_{x\in\mathbb R^d} \frac{\e^{-\epsilon_0 \|x\|}}{w(x)}<\infty
\end{equation}
then Assumption~\ref{ass2} is satisfied.
\end{lem}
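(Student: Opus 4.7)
The plan is to bound the integral $\int_{\mathbb R^d} g(\xi)^2/w(\xi)\,d\xi$ directly, by splitting one factor of $g$ off and pairing the remaining ratio $g/w$ with the exponential decay assumption on $w$.

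First I would use the boundedness of $g$ to pull out a supremum: writing $M_g = \sup_\xi g(\xi) < \infty$ and $M_w = \sup_x e^{-\epsilon_0\|x\|}/w(x) < \infty$ by \eqref{lemass2simplecond}, I would estimate
\[
\int_{\mathbb R^d} \frac{g(\xi)^2}{w(\xi)}\,d\xi \;=\; \int_{\mathbb R^d} g(\xi)\cdot\frac{g(\xi)}{w(\xi)}\,d\xi \;\le\; M_g \int_{\mathbb R^d} \frac{g(\xi)}{w(\xi)}\,d\xi.
\]

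Next, I would insert the factor $e^{\epsilon_0\|\xi\|}e^{-\epsilon_0\|\xi\|}=1$ in the remaining integrand and apply the bound \eqref{lemass2simplecond} to $e^{-\epsilon_0\|\xi\|}/w(\xi)$, giving
\[
\int_{\mathbb R^d} \frac{g(\xi)}{w(\xi)}\,d\xi \;=\; \int_{\mathbb R^d} g(\xi)\,e^{\epsilon_0\|\xi\|}\cdot\frac{e^{-\epsilon_0\|\xi\|}}{w(\xi)}\,d\xi \;\le\; M_w \int_{\mathbb R^d} e^{\epsilon_0\|\xi\|}\,g(\xi)\,d\xi,
\]
which is finite by hypothesis. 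Combining the two estimates yields $g/w\in\LS^2_w$, which is Assumption~\ref{ass2}.

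There is no real obstacle here; the entire argument is a two-step pointwise bound exploiting (i) the uniform bound on $g$ to reduce the $\LS^2$-integrand to an $\LS^1$-integrand and (ii) the hypothesis that $w$ decays no faster than $e^{-\epsilon_0\|x\|}$ to dominate $1/w$ by an exponentially growing factor that is absorbed by the finite exponential moment of $g$. The only point worth checking is that the definition $0/0=0$ adopted in the notational conventions ensures the integrand $g^2/w$ is unambiguously defined wherever $w$ vanishes, since \eqref{lemass2simplecond} forces $w$ to be strictly positive almost everywhere on $\mathbb R^d$ anyway.
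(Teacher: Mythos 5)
Your proof is correct and is essentially the paper's own argument: the paper writes $g^2/w = g\cdot\bigl(\e^{-\epsilon_0\|\xi\|}/w\bigr)\cdot \e^{\epsilon_0\|\xi\|}g$ and pulls out the single supremum $\sup_x\bigl(g(x)\,\e^{-\epsilon_0\|x\|}/w(x)\bigr)$, whereas you split that same bound into the two factors $M_g$ and $M_w$ — a purely cosmetic difference. No gap.
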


If the support of $w$ and $g$ is contained in a subset $\domain$ of $\mathbb R^d$, the situation becomes more difficult as one has to control the rate at which $w$ converges to zero at the boundary of the support set. We provide sufficient conditions for the set $\domain=\mathbb R^m_+\times \mathbb R^n$, starting with the scalar case $\domain=\mathbb R_+$.
\begin{lem}\label{lemass2spec}
Let $d=1$ and $p\in\mathbb N$. Assume that $g$ is a bounded density with support in $\mathbb R_+$ and has a finite exponential moment
\[ \int_0^\infty \e^{\epsilon_0 \xi}\,g(\xi)\,d\xi<\infty \]
for some $\epsilon_0>0$. Assume further that $g$ is of class $C^p$. If $w$ has support in $\mathbb R_+$, and decays at most polynomially at zero and exponentially at infinity such that
\begin{equation}\label{lemass2specass}
 \sup_{x\in [0,1]} \frac{  x^{2p} }{w(x)}<\infty  \quad\text{and}\quad  \sup_{x\ge 1} \frac{ \e^{-\epsilon_0 x} }{w(x)}<\infty
\end{equation}
then Assumption~\ref{ass2} is satisfied.
\end{lem}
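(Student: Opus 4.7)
The plan is to split the integral $\int_0^\infty g(x)^2/w(x)\,dx$ at $x=1$ and bound each piece using the two displayed conditions in \eqref{lemass2specass}, together with the regularity and moment hypotheses on $g$.

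For the tail piece $\int_1^\infty g(x)^2/w(x)\,dx$, the second bound in \eqref{lemass2specass} gives a constant $C$ with $1/w(x)\le C\,\e^{\epsilon_0 x}$ on $[1,\infty)$. Since $g$ is bounded, I can use $g(x)^2\le \|g\|_\infty g(x)$ to reduce the integral to $C\,\|g\|_\infty\int_1^\infty g(x)\,\e^{\epsilon_0 x}\,dx$, which is finite by the assumed exponential moment. This step is routine.

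For the near-zero piece $\int_0^1 g(x)^2/w(x)\,dx$, the first bound in \eqref{lemass2specass} reduces matters to showing $\int_0^1 g(x)^2/x^{2p}\,dx<\infty$. Here is the substantive step: since $g$ has support in $\mathbb R_+$ and lies in $C^p(\mathbb R)$ (per the paper's convention), the identity $g\equiv 0$ on $(-\infty,0)$ forces the left derivatives at $0$ to vanish, and by continuity of $g,g',\dots,g^{(p)}$ at $0$, I obtain $g(0)=g'(0)=\cdots=g^{(p-1)}(0)=0$ (and $g^{(p)}(0)=0$ as well). An order-$p$ Taylor expansion with remainder then yields, for $x\in[0,1]$,
\[
g(x)=\frac{x^p}{p!}\,g^{(p)}(\theta_x x)\quad\text{for some }\theta_x\in(0,1),
\]
and since $g^{(p)}$ is continuous hence bounded on $[0,1]$, this gives $|g(x)|\le M\,x^p$ on $[0,1]$ for a constant $M$. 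Consequently $g(x)^2/x^{2p}\le M^2$ on $[0,1]$, and combined with the polynomial lower bound on $w$ the near-zero integral is finite.

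Adding the two bounds proves Assumption~\ref{ass2}. The only subtlety worth flagging is the vanishing of the derivatives of $g$ at the boundary of its support; this is what forces the precise matching between the smoothness index $p$ of $g$ and the polynomial decay rate $2p$ allowed for $w$ at the origin in \eqref{lemass2specass}. Everything else is direct estimation.
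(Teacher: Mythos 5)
Your proposal is correct and follows essentially the same route as the paper's proof: split at $x=1$, bound the tail via boundedness of $g$, the exponential moment, and the second condition in \eqref{lemass2specass}, and bound the piece near zero via the Taylor expansion $g(x)=\frac{x^p}{p!}g^{(p)}(\xi)$ together with the first condition. Your explicit justification that the derivatives of $g$ vanish at $0$ (because $g$ is $C^p$ on all of $\mathbb R$ and vanishes on $(-\infty,0)$) is a step the paper leaves implicit, but the argument is the same.
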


The case where $\domain=\mathbb R_+^m\times \mathbb R^n$ is similar, but requires stronger conditions on $g$ and $w$.
We respect the product structure of the domain by writing $g=g(x,y)$ for $x\in\mathbb R_+^m$ and $y\in\mathbb R^n$.
The following tubular neighborhood of the boundary of $\domain$
\[ \mathcal I = \mathbb R_+^m\setminus (1,\infty)^m =\{ x\in \mathbb R_+^m\mid \min_i x_i\le 1\}\]
is the convenient multivariate generalization of the unit interval from the above scalar case.
\begin{lem}\label{lemass2}
Let $d=m+n$ and $p\in\mathbb N$. Assume that $g(x,y)$ is a bounded density with support in $\mathbb R_+^m\times \mathbb R^n$ and
has a finite exponential moment
\[ \int_{\mathbb R^m_+} \int_{\mathbb R^n} \e^{\epsilon_1 \|\xi\| + \epsilon_2 \| \eta\|}\,g(\xi,\eta)\,d\xi\,d\eta<\infty \]
for some $\epsilon_1,\,\epsilon_2>0$. Assume further that $g(x,y)$ is of class $C^p$ in $x$ and the
$p$-th partial derivative $\partial_{x_i}^p g(x,y)$ is bounded on $\mathcal I\times\mathbb R^n$,
for all $i=1,\dots,m$. If $w$ has support in $\mathbb R^m_+\times \mathbb R^n$,
and decays at most polynomially around the boundary and exponentially at infinity such that
\begin{equation}\label{lemass2ass}
 \sup_{(x,y)\in \mathcal I\times\mathbb R^n}
 \frac{  \min_i x_i^{p}\, \e^{  - \epsilon_2\| y\|}  }{w(x,y)}<\infty  \quad\text{and}\quad  \sup_{(x,y)\in (1,\infty)^m\times \mathbb R^n} \frac{ \e^{-\epsilon_1 \|x\| - \epsilon_2\| y\|} }{w(x,y)}<\infty
\end{equation}
then Assumption~\ref{ass2} is satisfied.
\end{lem}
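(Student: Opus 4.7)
The plan is to split the domain $\mathbb R^m_+\times\mathbb R^n$ into the tubular boundary piece $\mathcal I\times\mathbb R^n$ and the interior piece $(1,\infty)^m\times\mathbb R^n$, and to verify $\int g^2/w\,dx\,dy<\infty$ on each separately. On the interior we exploit the boundedness of $g$ and the exponential lower bound on $w$, while on the tube we trade off the polynomial lower bound on $w$ against a matching polynomial upper bound on $g$ obtained from Taylor's theorem and the prescribed $C^p$ regularity.

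For the interior piece $(1,\infty)^m\times\mathbb R^n$, the second bound in \eqref{lemass2ass} gives a constant $C$ with $1/w(x,y)\le C\,\e^{\epsilon_1\|x\|+\epsilon_2\|y\|}$, hence
\[
  \int_{(1,\infty)^m\times\mathbb R^n} \frac{g^2}{w}\,dx\,dy \le C\|g\|_\infty \int_{\mathbb R^m_+\times\mathbb R^n} g(x,y)\,\e^{\epsilon_1\|x\|+\epsilon_2\|y\|}\,dx\,dy<\infty
\]
by the assumed exponential moment. This step is essentially the same as in Lemma~\ref{lemass2simple}.

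The tubular piece is the interesting part. For $x\in\mathcal I$ pick an index $i=i(x)$ with $x_i=\min_j x_j\le 1$. Since $g(\cdot,y)\equiv 0$ on $\{x_i<0\}$ and is globally of class $C^p$ in $x$, continuity of each partial derivative forces
\[
  \partial_{x_i}^k g(x_1,\dots,0,\dots,x_m,y)=0 \quad\text{for }k=0,1,\dots,p,
\]
so Taylor's formula with integral remainder applied in the $x_i$ variable yields
\[
  g(x,y)=\frac{1}{(p-1)!}\int_0^{x_i}(x_i-t)^{p-1}\,\partial_{x_i}^{p}g(x_1,\dots,t,\dots,x_m,y)\,dt .
\]
For $t\in[0,x_i]\subset[0,1]$ the integrand's base point still lies in $\mathcal I\times\mathbb R^n$, so the hypothesized uniform bound $M:=\sup_{\mathcal I\times\mathbb R^n}|\partial_{x_i}^p g|$ applies and produces
\[
  |g(x,y)|\le \frac{M}{p!}\,x_i^{p} = \frac{M}{p!}\,(\min_j x_j)^p,\qquad (x,y)\in\mathcal I\times\mathbb R^n.
\]
Combining this with the first bound in \eqref{lemass2ass}, which reads $1/w(x,y)\le C'\,\e^{\epsilon_2\|y\|}/(\min_i x_i)^p$, the polynomial factors cancel:
\[
  \frac{g(x,y)^2}{w(x,y)}\le g(x,y)\cdot \frac{M}{p!}(\min_j x_j)^p\cdot \frac{C'\,\e^{\epsilon_2\|y\|}}{(\min_j x_j)^p}= C''\,g(x,y)\,\e^{\epsilon_2\|y\|},
\]
and integrating over $\mathcal I\times\mathbb R^n$ gives a finite quantity, again by the exponential moment assumption. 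Summing the two pieces proves Assumption~\ref{ass2}.

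\textbf{Main obstacle.} The crux is the Taylor step on the tube. One must be careful that (a)~the boundary vanishing of $\partial_{x_i}^k g$ for all $k\le p$ really follows from $C^p$ regularity on \emph{all} of $\mathbb R^m$ together with $g\equiv 0$ off $\mathbb R^m_+$; (b)~the index $i$ producing the smallest coordinate may depend on $x$, yet the resulting bound $|g|\le C(\min_j x_j)^p$ must be uniform; and (c)~the intermediate points $(x_1,\dots,t,\dots,x_m,y)$ appearing in the remainder stay inside $\mathcal I\times\mathbb R^n$ so that the hypothesis on $\partial_{x_i}^p g$ can be invoked. All three are handled by the choice $x_i\le 1$ made above, but they are the delicate structural points on which the proof rests.
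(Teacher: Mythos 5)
Your proposal is correct and follows essentially the same route as the paper's proof: the same decomposition into $\mathcal I\times\mathbb R^n$ and $(1,\infty)^m\times\mathbb R^n$, the same Taylor expansion in the smallest coordinate $x_{i^\ast}=\min_j x_j$ to get $|g(x,y)|\le K\,(\min_j x_j)^p$ on the tube, and the same pairing of each piece with the corresponding supremum in \eqref{lemass2ass} and the exponential moment. The only cosmetic difference is that you use the integral form of the Taylor remainder where the paper uses the Lagrange form, and you make explicit the boundary-vanishing of the lower-order derivatives that the paper leaves implicit.
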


We note that the conditions in Lemmas~\ref{lemass2simple}, \ref{lemass2spec} and \ref{lemass2} can be explicitly verified for transition densities of affine processes, see Corollary~\ref{coraffineC} below.

\section{Affine Models}\label{sec:affinemodels}
The main application of the polynomial density approximation is for affine factor models. In this section, we follow the setup of \citet{duffiefilipovicschachermayer03}, which we now briefly recap. Let $d=m+n\ge 1$. We define the index set $J=\{m+1,\dots,d\}$, and write $v_J=(v_{m+1},\dots,v_d)$ and $m_{JJ}=\left(m_{kl}\right)_{k,l\in J}$, for any vector $v$ and matrix $m$. We consider an affine process $X$ on the canonical state space $\domain=\mathbb R_+^m\times\mathbb R^n$ with generator
\begin{align*}
 \mathcal A f(x)&= \sum_{k,l=1}^d  \left(\diag\left(0, a\right)+\sum_{i=1}^m x_i\alpha_{i}\right)_{kl}\,\frac{\partial^2 f(x)}{\partial x_k\partial x_l} + \left(b+\beta\,x\right)^\top\nabla  f(x)\\
 &\quad+ \int_\domain \left( f(x+\xi)-f(x)-\chi_J(\xi)^\top\nabla_J f(x)\right) m(d\xi)\\
 &\quad+ \int_\domain \left( f(x+\xi)-f(x)\right)\left(\sum_{i=1}^m x_i\mu_i(d\xi)\right)
\end{align*}
for some appropriate positive semidefinite $n\times n$- and $d\times d$-matrices $a$ and $\alpha_i$, respectively. Here, with $\diag\left(0, a\right)$ we denote the block-diagonal $d\times d$-matrix with blocks given by the $m\times m$-zero matrix and $a$. Moreover, $\chi_J(\xi)$ denotes an $\mathbb R^n$-valued continuous and bounded truncation function with $\chi_J(\xi)=\xi_J$ in a neighborhood of the origin $\xi=0$. For detailed parametric
restrictions on $(a,\alpha_i, b,\beta,m,\mu_i)$ we refer the reader to \citet[Definition
2.6]{duffiefilipovicschachermayer03}. We assume for simplicity\footnote{At
the cost of more technical analysis, the following results could also
be proved for the general case of infinite variation jumps $\mu_i$
with infinite tail mean.} that the jump measures $\mu_i$ are of
finite variation type with integrable large jumps
\[\int_{\domain}  \|\xi\| \, \mu_i(d\xi)<\infty,\quad i=1,\dots,m.\]

\subsection{Affine Transform Formula}
The analytical tractability of affine models stems from the fact that the characteristic function of $X_t|X_0=x$ is explicitly given by the affine
transform formula
\begin{equation}\label{eqatfXX}
  \mathbb E\left[ e^{\iim u^\top X_t} \mid X_0=x\right]=e^{\phi(t,\iim u)+\psi(t,\iim u)^\top x} ,\quad u\in\mathbb R^d,\quad x\in \domain
\end{equation}
where the $\mathbb C_-$- and $\mathbb C^m_-\times\iim\mathbb
R^n$-valued functions $\phi=\phi(t,\iim  u)$ and $\psi=\psi(t,\iim u)$ solve the generalized Riccati equations, for $i=1,\dots,m$,
\begin{equation}\label{ricceq}
  \begin{aligned}
    \partial_t\phi&=\psi_J^\top a\,\psi_J+b^\top\psi+\int_{\domain}\left(e^{\psi^\top\xi}-1-\psi_J^\top\chi_J(\xi)\right)m(d\xi),\\
    \phi(0)&=0,\\
        \partial_t\psi_i&=\psi^\top\alpha_i\,\psi+\mathcal B_i\,\psi+\int_{\domain}\left(e^{\psi^\top\xi}-1\right)\mu_i(d\xi),\\
        \psi_i(0)&=\iim u_i, \\
    \partial_t\psi_J&=\mathcal B_{JJ}\psi_J,\\
    \psi_J(0)&=\iim u_J,
  \end{aligned}
\end{equation}
where we define $\mathcal
B=\beta^\top$ and write $\mathcal B_i$ for the $i$th row vector of
$\mathcal B$.
Obviously, we have
\[ \psi_J(t,\iim u)=\iim e^{\mathcal B_{JJ} t }\,u_J,\]
and $\phi(t,\iim u)$ is given by simple integration of the right hand
side of its equation.

\subsection{Polynomial Moments}
It is well known that if $X_t|X_0=x$ has finite $k$-th moment,
\[\mathbb E\left[ \|X_t\|^k \mid X_0=x\right]<\infty\quad \text{for all $x\in\domain$,}\]
then $\phi(t,u)$ and $\psi(t,u)$ are of class $C^k$ in $u$. Moreover, the polynomial moments are explicitly given in terms of the respective mixed derivatives of the characteristic function
\[ \mathbb E\left[  X_t^\alpha \mid X_0=x\right]=-\iim^{|\alpha|}\frac{\partial^{|\alpha|}}{\partial u_{\alpha_1}\cdots\partial u_{\alpha_d}}\, e^{\phi(t,\iim u)+\psi(t,\iim u)^\top x}|_{u=0} \]
for $|\alpha|\le k$, see e.g.\ \citet[Lemma A.1]{duffiefilipovicschachermayer03}. It follows by inspection that the right hand side of this equation is a real polynomial in $x$ of degree less than or equal to $|\alpha|$. Recently, generalizing the recursive method used in \citet{formansorensen08} for Pearson-type diffusions, \citet{cuchieroetal08} proposed an alternative method to compute the coefficients of this polynomial. The idea rests on the insight that the affine generator $\mathcal A$ formally maps $\mathcal P_k$ into $\mathcal P_k$, where $\mathcal P_k$ denotes the finite-dimensional linear space of all polynomials in $x\in\mathbb R^d$ of degree less than or equal to $k$.\footnote{This method is not restricted to affine processes, but can be defined for any Markov process with finite $k$-th moments, and whose infinitesimal generator maps $\mathcal P_k$ into itself.} The generator $\mathcal A$ thus restricts to a linear operator $\mathcal A_k$ on $\mathcal P_k$. Consequently, we obtain the formal representation
\[\mathbb E\left[  X_t^\alpha \mid X_0=x\right] = \e^{\mathcal A_k  t}\, x^\alpha\]
where $\e^{\mathcal A_k  t}=\sum_{j\ge 0} \frac{\left(\mathcal A_k t\right)^j}{j!}$ is the exponential of $\mathcal A_k t$. This can be expressed as a matrix. We shall illustrate this for $d=1$. The dimension of $\mathcal P_k$ then equals $k+1$, and we can pick as canonical basis of $\mathcal P_k$ the set $\mathcal Q=\{ 1,x,\dots,x^k\}$. For every $j=0,\dots,k$ we then calculate symbolically the coefficients $q_{ij}$ in
\begin{equation}\label{eqQdef}
  \mathcal A_k x^j = \mathcal A x^j =\sum_{i=0}^k q_{ij} x^i .
\end{equation}
Hence $\mathcal A_k$ can be represented by the upper-triangular matrix $Q=\left(q_{ij}\right)$ with respect to the basis $\mathcal Q$. In other words, if we identify a generic polynomial $p(x)=\sum_{i=0}^k p_i x^i$ in $\mathcal P_k$ with the vector of its coefficients $p=(p_0,\dots,p_k)^\top$, then $\mathcal A_k p(x)\in\mathcal P_k$ equals the polynomial with coefficient vector $Qp$. Moreover,
\begin{equation}\label{eq:momentformula}
\mathbb E\left[  p(X_t) \mid X_0=x\right] = \e^{ Q  t}\, p.
\end{equation}
See Examples~\ref{example:laguerre} and \ref{example:heston} below for some concrete applications for $d=1$ and $d=2$.

\subsection{Existence and Properties of Affine Transition Densities} \label{sec:truedensities}
In this section we present our main theoretical results, which establish existence and smoothness of the density of the conditional distribution of the affine process $X$. Moreover, we provide explicitly verifiable conditions asserting that Lemmas~\ref{lemass2simple}--\ref{lemass2} apply.

Our first result provides sufficient conditions for the existence and smoothness of
a density of $X_t|X_0=x$, for some $t>0$ and $x\in\mathbb R^m_+\times\mathbb R^n$. These easy to check conditions apply in particular to multi-factor affine term-structure models on $\mathbb R^m_+ \times \mathbb R^n$ from \citet{duffiekan96} and \citet{daisingleton00}, and Heston's stochastic volatility model. The proof is given in Appendix~\ref{sec_proofthmtailnew}.

\begin{thm}\label{thmtailnew}
Assume that the $d\times (n+1)d$-matrix\footnote{Here, for given $d\times d$-matrices $B_1,\,B_2,\,\dots,\,B_n$ the expression $\left[ B_1,\,B_2,\,\dots,\,B_n\right]$ denotes the $d\times nd$-block matrix we obtain by putting the matrices next to each other.}
\begin{equation}\label{Kcaldef}
 \mathcal K=\left[ \sum_{i=1}^m\alpha_i ,\, \diag\left(0,a\right) ,\, \diag\left(0,\mathcal B_{JJ}^\top a\right),\dots,\, \diag\left(0,(\mathcal B_{JJ}^{n-1})^\top a\right)\right]
\end{equation}
has full rank. Further, let $p$ be a nonnegative integer with
\begin{equation}\label{eq:jakobsebastian}
p< \min_{i\in \{1,\dots,m\}} \frac{b_i}{\alpha_{i,ii}}-1.
\end{equation}
Then $X_t|X_0=x$ admits a density $g(\xi)$ of class $C^p$ with support in $\mathbb R^m_+\times\mathbb R^n$ and the partial
derivatives of $g(\xi)$ of orders $0,\dots,p$ tend to $0$ as
$\|\xi\|\to\infty$.
\end{thm}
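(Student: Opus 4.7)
The plan is to obtain $g$ by Fourier inversion of the affine characteristic function $\hat g(u):=\exp(\phi(t,\iim u)+\psi(t,\iim u)^\top x)$. Once I can establish that $(1+\|u\|)^p\hat g(u)\in L^1(\mathbb R^d)$, the formula
\[
  g(\xi) = (2\pi)^{-d}\int_{\mathbb R^d}\e^{-\iim u^\top \xi}\,\hat g(u)\,du
\]
defines a $C^p$ function on $\mathbb R^d$ by dominated convergence, each partial derivative of order $\alpha$ with $|\alpha|\le p$ arises as the Fourier inverse of $(-\iim u)^\alpha\hat g(u)\in L^1$, and Riemann--Lebesgue supplies the decay at infinity. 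Nonnegativity of $g$ and support in $\mathbb R_+^m\times\mathbb R^n$ are automatic, since $\hat g$ is the characteristic function of $X_t|X_0=x$ and the canonical state space is invariant under $X$ by admissibility \citep{duffiefilipovicschachermayer03}. All content of the theorem thus reduces to the single integrability statement for $(1+\|u\|)^p\hat g(u)$.

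To extract decay from $\hat g$ I would decompose $\psi(t,\iim u)=\rho(t,u)+\iim\sigma(t,u)$. Admissibility forces $\rho_J\equiv 0$ and $\rho_i\le 0$ for $i\le m$, while $\sigma_J(t,u)=\e^{\mathcal B_{JJ}t}u_J$ is explicit. Separating real and imaginary parts in \eqref{ricceq} yields a coupled real ODE system for $(\rho,\sigma)$ together with the representation
\[
  \re\phi(t,\iim u)=\int_0^t\Bigl(-\sigma_J(s)^\top a\,\sigma_J(s)+b^\top\rho(s,u)+R_{\mathrm{jump}}(s,u)\Bigr)\,ds,
\]
where $R_{\mathrm{jump}}$ is bounded thanks to the finite-variation hypothesis on $m$ and the $\mu_i$. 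Each $\rho_i$, $i\le m$, satisfies a scalar Riccati inequality forced by the nonpositive quadratic $-\sigma^\top\alpha_i\sigma$.

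The main obstacle--and presumably where the novel ODE techniques advertised in the introduction enter--is a sharp asymptotic analysis of the above Riccati system. For each CIR coordinate $i\le m$, I would compare $\rho_i(t,u)$ with the scalar pure-CIR Riccati to show that $\rho_i(t,u)$ converges to a strictly negative limit as $|u_i|\to\infty$; the drift contribution $\int_0^t b_i\rho_i\,ds$ then adds a term of size $-(b_i/\alpha_{i,ii})\log(1+|u_i|)$ to $\re\phi$ up to bounded remainders, producing the familiar power-law tail $|\hat g|\sim|u_i|^{-b_i/\alpha_{i,ii}}$ of the univariate CIR characteristic function -- exactly the decay that \eqref{eq:jakobsebastian} is calibrated to absorb. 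To propagate sufficient decay to the remaining Gaussian coordinates, the full-rank hypothesis on $\mathcal K$ enters through a Kalman-type controllability argument: iterating $\sigma_J(s)=\e^{\mathcal B_{JJ}s}u_J$ against $a$ and combining with the $\sum_i\alpha_i$-terms shows that the quadratic forms $\int_0^t\sigma_J^\top a\,\sigma_J\,ds$ and $\int_0^t\sigma^\top\alpha_i\sigma\,ds$ appearing in $-\re\phi$ are coercive in the directions not already handled by the CIR estimate, with the coercivity constant being strictly positive exactly when $\mathcal K$ has full rank. Combining the two estimates produces the integrability of $(1+\|u\|)^p\hat g(u)$ required in the first paragraph and thereby closes the argument.
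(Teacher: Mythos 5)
Your route coincides with the paper's at every structural level: reduce the theorem to integrability of $\|u\|^p\left|e^{\phi(t,\iim u)+\psi(t,\iim u)^\top x}\right|$ (the paper's Lemma~\ref{lemdensity}, quoted from Sato), split $\psi$ into real and imaginary parts of the generalized Riccati system with $\Re\psi_J\equiv 0$ and $\Re\psi_i\le 0$, extract power-law decay of order $\|u\|^{-b_i/\alpha_{i,ii}}$ from the $\mathbb R_+$-coordinates and Gaussian decay from $\Re\phi\le-u_J^\top\left(\int_0^te^{\mathcal B_{JJ}^\top s}a\,e^{\mathcal B_{JJ}s}\,ds\right)u_J$, and use the full rank of $\mathcal K$ to ensure every direction is covered by one of these mechanisms. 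The paper formalizes that last point as the cone covering $\bigcup_{i=0}^m C_i=\mathbb R^d$ of Lemma~\ref{lemXXX}, which is exactly your controllability/coercivity statement.

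Two caveats, one of which is substantive. First, the decay contributed by coordinate $i$ is governed by membership in the cone $C_i=\{u^\top\alpha_i u\ge\epsilon\|u\|^2\}$, not by $|u_i|\to\infty$: the forcing term in the equation for $\Re\psi_i$ is $-\Im\psi^\top\alpha_i\,\Im\psi$, whose size at $t=0$ is the full quadratic form $u^\top\alpha_i u$, and a coordinate-wise phrasing leaves uncovered the directions where $u_i$ is small but $C_i$ is still the only cone containing $u$. Second, the mechanism you give for the logarithm does not work as stated: if $\Re\psi_i(s,\iim u)$ merely converged to a strictly negative limit for each fixed $s$, then $b_i\int_0^t\Re\psi_i\,ds$ would stay bounded and no power-law decay would result. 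What actually happens is that $\Re\psi_i(s,\iim u)$ behaves like $-1/(\alpha_{i,ii}s)$ once $s\gtrsim 1/\|u\|$, so the time integral diverges like $-\frac{1}{\alpha_{i,ii}}\log\|u\|$; the paper obtains this via the rescaling $F_i(t,u)=\frac{1}{\|u\|}\Re\psi_i\left(t/\|u\|,\iim u\right)$, a uniform bound $F_i(t_0,\iim u)\le-\rho$ on $C_i\cap\{\|u\|\ge R\}$ (Lemma~\ref{lemminrho}), Volkmann's comparison lemma (Lemma~\ref{lemcomp}) against the explicit scalar Riccati solution of Lemma~\ref{ric sol}, and the exact integration \eqref{eqpreeq}. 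This is the technical core of the theorem — it is what produces the exponent $b_i/\alpha_{i,ii}$ and hence justifies \eqref{eq:jakobsebastian} — and your proposal defers it entirely; you would also need the preliminary block-diagonalization of the $\alpha_i$ via \citet[Theorem 10.7]{filipovicbook2009}, which decouples the imaginary-part equations and makes the scalar comparison legitimate.
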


We note that condition \eqref{eq:jakobsebastian} is sharp and cannot be relaxed in general. Consider for instance the scalar square-root diffusion $X$ on $\mathbb R_+$ with generator $\mathcal A f(x)=\alpha x f''(x)+ b f'(x)$. It is well known that for any parameter values $\alpha>0$ and $b\ge 0$, the distribution of $\frac{2 X_t}{\alpha t}\mid X_0=x$ is noncentral $\chi^2$ with $\frac{2b}{\alpha}$ degrees of freedom and noncentrality parameter $\frac{2 x}{\alpha t}$, see e.g.\ \citet[Exercise 10.9]{filipovicbook2009}. The corresponding density function $g(\xi)$ satisfies $\lim_{\xi\to 0} g(\xi)=0$, and is therefore of class $C^0$, if and only if the degrees of freedom $\frac{2b}{\alpha}>2$, see \citet[Chap.\ 29]{johkotbal95}. This is exactly what condition \eqref{eq:jakobsebastian} states for $p=0$.

As regards exponential moments of $X_t|X_0=x$, we combine and
rephrase some results from \citet{duffiefilipovicschachermayer03}:
\begin{thm}\label{thmmoments}
Assume that the jump measures admit exponential moments
\[ \int_{\{\|\xi\|>1\}} e^{q^\top\xi}\,m(d\xi)<\infty\quad\text{and}\quad \int_{\{\|\xi\|>1\}} e^{q^\top\xi}\,\mu_i(d\xi)<\infty,\quad i=1,\dots,m\]
for all $q$ in some open neighborhood $V$ of $0$ in $\mathbb R^d$. Then the right hand side of \eqref{ricceq} is analytic in $\psi\in
V$. Suppose further that \eqref{ricceq} admits a $V$-valued solution $\psi(t,u)$ with $\psi(0,u)=u$ for all $t\in [0,T)$ and for all
$u$ in $[-\epsilon_1,\epsilon_1]^m\times [-\epsilon_2,\epsilon_2]^n$, for some $\epsilon_1,\epsilon_2>0$. Then
$X_t|X_0=x$ has a finite exponential moment
\begin{equation}\label{eqexpqXt}
\mathbb E\left[\e^{\epsilon_1 \|Y_t\|+\epsilon_2 \|Z_t\|}\mid X_0=x\right]<\infty
\end{equation}
for all $t\in [0,T)$, where we denote $Y_t=(X_{1,t},\dots,X_{m,t})^\top$ and $Z_t=(X_{m+1,t},\dots,X_{d,t})^\top$.
\end{thm}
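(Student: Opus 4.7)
The plan is to address the two assertions separately. For the analyticity of the right-hand side of \eqref{ricceq} on $V$, the quadratic and linear terms in $\psi$ are trivially holomorphic, so only the jump integrals require argument. The exponential-moment hypothesis on $m$ and on each $\mu_i$ furnishes, uniformly over any relatively compact subset of a complex neighborhood of $V$, an integrable majorant for $\psi\mapsto e^{\psi^\top \xi}-1-\psi_J^\top\chi_J(\xi)$ and for $\psi\mapsto e^{\psi^\top \xi}-1$ respectively; differentiation under the integral sign then delivers analyticity, following \citet[Lemma~5.3]{duffiefilipovicschachermayer03}.

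For the moment bound, I would fix $u\in[-\epsilon_1,\epsilon_1]^m\times[-\epsilon_2,\epsilon_2]^n$ and $t\in[0,T)$, and introduce the candidate exponential martingale
\[ M_s = \exp\left(\phi(t-s,u)+\psi(t-s,u)^\top X_s\right),\quad s\in[0,t].\]
Since by hypothesis $\psi(t-s,u)\in V$ throughout, all the jump integrals entering $\mathcal A M_s$ are absolutely convergent. Applying Itô's formula to the exponential and substituting the generalized Riccati system \eqref{ricceq} makes the finite-variation part vanish identically, so $M$ is a non-negative local martingale, hence a supermartingale. Taking expectations yields
\[ \mathbb E\!\left[\e^{u^\top X_t}\mid X_0=x\right]\le M_0=\exp\left(\phi(t,u)+\psi(t,u)^\top x\right)<\infty.\]

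To pass from these one-sided Laplace bounds to the two-sided bound \eqref{eqexpqXt}, I exploit $Y_t\in\mathbb R_+^m$ and write
\[ \e^{\epsilon_1\|Y_t\|+\epsilon_2\|Z_t\|}\le \prod_{i=1}^m \e^{\epsilon_1 Y_{i,t}}\prod_{j=1}^n\left(\e^{\epsilon_2 Z_{j,t}}+\e^{-\epsilon_2 Z_{j,t}}\right),\]
using $\|Y_t\|\le \sum_i Y_{i,t}$ and $\|Z_t\|\le\sum_j |Z_{j,t}|$. Expanding the right-hand side yields a sum of $2^n$ exponentials $\e^{u_s^\top X_t}$, each with parameter vector $u_s\in[-\epsilon_1,\epsilon_1]^m\times[-\epsilon_2,\epsilon_2]^n$; applying the previous supermartingale bound termwise gives \eqref{eqexpqXt}.

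The main obstacle is the Itô / local-martingale step. One must carefully verify that, for $\psi$ values lying in $V$, the stochastic and compensated-jump integrals in the semimartingale decomposition of $M$ are well-defined, that the drift identically cancels against the Riccati system, and that a suitable localizing sequence (for instance $\tau_n=\inf\{s:\|X_s\|>n\}$) justifies the supermartingale conclusion before invoking non-negativity of $M$. This is precisely the reasoning carried out in \citet[Lemma~5.3 and Theorem~2.16]{duffiefilipovicschachermayer03}; once it is in place, the remainder of the argument is bookkeeping.
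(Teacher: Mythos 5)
Your proposal is correct and follows essentially the same route as the paper: both reduce the analyticity claim to \citet[Lemma 5.3]{duffiefilipovicschachermayer03}, obtain the one-sided exponential moments $\mathbb E[\e^{q^\top X_t}\mid X_0=x]<\infty$ for $q$ in the cube $[-\epsilon_1,\epsilon_1]^m\times[-\epsilon_2,\epsilon_2]^n$ from the existence of the $V$-valued Riccati solution (the paper by citing \citet[Theorem 2.16 and Lemma 6.5]{duffiefilipovicschachermayer03}, you by sketching the underlying nonnegative-local-martingale/supermartingale verification argument that those results encapsulate), and then conclude via the same elementary inequality bounding $\e^{\epsilon_1\|Y_t\|+\epsilon_2\|Z_t\|}$ by a finite sum of exponentials $\e^{q_\alpha^\top X_t}$ with $q_\alpha$ in that cube. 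Your use of $Y_t\in\mathbb R_+^m$ to drop the sign-flips in the first $m$ coordinates (giving $2^n$ rather than $2^d$ terms) is a harmless minor refinement.
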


\begin{proof}
That the right hand side of \eqref{ricceq} is analytic in $\psi\in
V$ follows from \citet[Lemma 5.3]{duffiefilipovicschachermayer03}. From \citet[Theorem 2.16 and Lemma 6.5]{duffiefilipovicschachermayer03} we then infer that
\[  \mathbb E\left[\e^{q^\top X_t}\mid X_0=x\right]<\infty \]
for all $t\in [0,T)$ and for all $q\in [-\epsilon_1,\epsilon_1]^m\times [-\epsilon_2,\epsilon_2]^n$. Combining this with the elementary inequality
\[ \e^{\epsilon_1 \|Y_t\|+\epsilon_2 \|Z_t\|}\le \e^{\epsilon_1 \sum_{i=1}^m |X_{it}|+\epsilon_2 \sum_{i=m+1}^d |X_{it}|}\le \sum_{|\alpha|=0}^1\e^{ q_\alpha^\top X_{t}},\]
where we denote $q_\alpha=\left((-1)^{\alpha_1}\epsilon_1,\dots,(-1)^{\alpha_m}\epsilon_1,
(-1)^{\alpha_{m+1}}\epsilon_2,\dots,(-1)^{\alpha_d}\epsilon_2\right)^\top\in [-\epsilon_1,\epsilon_1]^m\times [-\epsilon_2,\epsilon_2]^n$, proves \eqref{eqexpqXt}.
\end{proof}

Note that if $m(d\xi)$ and $\mu_i(d\xi)$ have light tails of the
order $e^{-r\|\xi\|^2}d\xi$ for some $r>0$, or have compact
support in particular, then the first assumption of
Theorem~\ref{thmmoments} is satisfied for $V=\mathbb R^d$. Even
then, however, the solution $\psi(t,u)$ exists only on a finite time
horizon $t<T<\infty$ for any nonzero $u\in\mathbb R^d$ in general.
We refer to the discussion of the diffusion case in \citet{mayerhoferfilipovic09},
see also \citet[Chapter 10]{filipovicbook2009}.

We further present an additional result which concerns the existence
of the marginal transition density of integrated affine jump-diffusions,
which are not covered by Theorem \ref{thmtailnew}. In fact, if $X$ is a one-dimensional
affine process on $\mathbb R_+$, then the two-dimensional process $(dX, X\, dt)^\top$ is
affine again, with state space $\mathbb R_+^2$.
However, its diffusion matrix is degenerate and thus violates the conditions of Theorem~\ref{thmtailnew}. Nevertheless, a slight adaption of its proof yields the existence of the marginal transition density of the
integrated process $\int X\,dt$ under some more stringent conditions. The proof of the following theorem is given in Appendix \ref{proof: int cbi dens}.
\begin{thm}\label{th: int cbi dens}
Let $X$ be an $\mathbb R_+$-valued affine process with parameters $(a=0,\alpha,b,\beta,m,\mu)$.
Further, let $p$ be a nonnegative integer with
\begin{equation}\label{eq:jakobsebastian1}
p< \frac{b}{2\alpha}-1.
\end{equation}
Then $\int_0^t X_s ds\, | \, X_0=x$ admits a a density $g(\xi)$ of
class $C^p$ with support in $\mathbb R_+$ and the partial derivatives of $g(\xi)$
of orders $0,\dots,p$ tend to $0$ as $  \xi \to\infty$.
\end{thm}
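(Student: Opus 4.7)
The strategy is to mimic the Fourier-inversion argument that (presumably) underlies Theorem~\ref{thmtailnew}, but applied to the marginal of the degenerate two-dimensional affine system $(X_t,\int_0^t X_s\,ds)$. The key adjustment is that the Fourier variable enters the Riccati system as a \emph{forcing term} rather than as initial data, which shifts the effective exponent in the tail decay of the characteristic function from $b/\alpha$ to $b/(2\alpha)$, thereby explaining \eqref{eq:jakobsebastian1}.

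First, I would record the characteristic function of $Y_t:=\int_0^t X_s\,ds$ conditional on $X_0=x$. Viewing $Z_t=(X_t,Y_t)^\top$ as an affine process on $\mathbb R_+^2$ with $\alpha_1=\mathrm{diag}(\alpha,0)$, $\alpha_2=0$, $b=(b,0)^\top$, and drift matrix $\beta$ with $\beta_{21}=1$, $\beta_{22}=0$, the transform formula \eqref{eqatfXX} specialized to $u=(0,v)^\top$ yields
\[
 \mathbb E\bigl[\e^{\iim v Y_t}\mid X_0=x\bigr]=\e^{\phi(t,\iim v)+\psi(t,\iim v)\,x},
\]
where $\psi(t,\iim v)$ solves the scalar Riccati ODE (obtained by substituting $\psi_2\equiv \iim v$ in the system analogous to \eqref{ricceq})
\[
 \partial_t\psi=\alpha\psi^2+\mathcal B_{11}\psi+\iim v+\int_{\mathbb R_+}\bigl(\e^{\psi\xi}-1\bigr)\mu(d\xi),\qquad \psi(0,\iim v)=0,
\]
and $\phi(t,\iim v)=\int_0^t\bigl(b\,\psi(s,\iim v)+\int(\e^{\psi(s,\iim v)\xi}-1)m(d\xi)\bigr)ds$.

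Second, I would establish the asymptotic bound
\[
 \mathrm{Re}\bigl(\phi(t,\iim v)+\psi(t,\iim v)\,x\bigr)\le -\tfrac{b}{2\alpha}\log|v|+C(t,x)\qquad \text{as }|v|\to\infty,
\]
by adapting the ODE techniques referred to in Appendix~\ref{sec_proofthmtailnew}. The heuristic is that for large $|v|$ the forcing term $\iim v$ dominates, so $\psi$ is of order $\sqrt{|v|/\alpha}$ (in modulus) with a real part proportional to $-\sqrt{|v|}$, while simultaneously $b\int_0^t\mathrm{Re}\,\psi(s,\iim v)\,ds$ contributes a logarithmic term with coefficient $-b/(2\alpha)$; this is precisely the halved exponent one sees in the closed-form CIR Laplace transform of $\int X_s\,ds$. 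The jump terms give only lower-order contributions since $\mathrm{Re}(\e^{\psi\xi}-1)\le 0$ whenever $\mathrm{Re}\,\psi\le 0$. Making these heuristics rigorous is the main obstacle: one must control the complex-valued trajectory of $\psi$ uniformly in $v$, which I would do by comparison with the pure-diffusion Riccati solution (where the bound is explicit via hyperbolic functions) and by showing that the jump integrals do not spoil the bound.

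Finally, once the bound is in place, the characteristic function of $Y_t\mid X_0=x$ decays like $|v|^{-b/(2\alpha)}$, so for any $p$ satisfying \eqref{eq:jakobsebastian1} the function $v\mapsto |v|^p\,|\widehat{g}(v)|$ is integrable on $\mathbb R$. Standard Fourier inversion then gives a version of $g$ that is of class $C^p$ with partial derivatives
\[
 g^{(k)}(\xi)=\tfrac{1}{2\pi}\int_{\mathbb R}(-\iim v)^k\e^{-\iim v\xi}\,\widehat{g}(v)\,dv,\qquad 0\le k\le p,
\]
and Riemann--Lebesgue implies $g^{(k)}(\xi)\to 0$ as $\xi\to\infty$. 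Support in $\mathbb R_+$ follows from $X_t\ge 0$ $\mathbb P$-a.s. This completes the plan.
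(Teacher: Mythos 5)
Your plan coincides with the paper's proof in all structural respects: the embedding of $\int_0^t X_s\,ds$ into the degenerate affine system on $\mathbb R_+^2$, the scalar Riccati equation with the Fourier variable entering as a forcing term, the target tail bound $|v|^{-b/(2\alpha)}$ for the characteristic function, and the concluding appeal to Fourier inversion (the paper's Lemma~\ref{lemdensity}) are exactly what Appendix~\ref{proof: int cbi dens} does. The one step you explicitly defer --- controlling $\Re\psi(t,\iim v)$ uniformly for large $|v|$ --- is where all the actual work lies, and the paper resolves it by a two-stage argument rather than by direct comparison with the closed-form CIR transform. First, one rescales both space and time by $\sqrt{|v|}$, setting $F(t,\iim v)=|v|^{-1/2}\,\Re\psi(t/\sqrt{|v|},\iim v)$ and $G$ analogously for the imaginary part; as $|v|\to\infty$ the rescaled system converges locally uniformly to the autonomous limit $\partial_t F_\infty=\alpha(F_\infty^2-G_\infty^2)$, $\partial_t G_\infty=2\alpha F_\infty G_\infty\pm 1$ with zero initial data, and since the forcing $\pm 1$ pushes $G_\infty$ away from zero immediately, the term $-\alpha G_\infty^2$ forces $F_\infty(t_0)<0$ at some $t_0>0$; this yields $F(t_0,\iim v)\le-\rho<0$ uniformly for $|v|\ge R$ (Lemma~\ref{lemminrhoXX}). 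Second, from $t_0$ onward one discards the helpful terms and keeps only the crude inequality $\partial_t F\le\alpha F^2+\beta F/\sqrt{|v|}$, whose explicit solution (Lemma~\ref{ric sol}) combined with the comparison principle (Lemma~\ref{lemcomp}) integrates to $\int_0^t\Re\psi(s,\iim v)\,ds\le-\tfrac{1}{2\alpha}\log|v|+O(1)$; multiplying by $b$ via $\partial_t\phi=b\psi$ gives the claimed decay, and your verification that \eqref{eq:jakobsebastian1} is exactly the integrability threshold is correct. So your outline identifies the right route and the right quantitative targets, and your observation that the jump terms contribute nonpositively to $\partial_t\Re\psi$ once $\Re\psi\le 0$ is precisely how the paper disposes of them; but the ``main obstacle'' you name is genuinely the heart of the proof, and your proposed direct comparison with the hyperbolic-function solution of the forced diffusion Riccati is not carried out and is not obviously uniform in $v$ in the presence of jumps, whereas the rescale-then-compare maneuver above is.
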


From the application point of view, we can rephrase the statements of the preceding theorems as follows:
\begin{cor}\label{coraffineC}
Theorems~\ref{thmtailnew}--\ref{th: int cbi dens} provide conditions in terms of the parameters of the affine process $X$ such that the assumptions in Lemmas~\ref{lemass2simple}--\ref{lemass2}, and thus eventually the validity of Assumptions~\ref{ass1} and \ref{ass2}, can explicitly be verified for the density of the (marginal) transition distributions of $X$.
\end{cor}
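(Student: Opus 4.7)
The plan is to verify the hypotheses of each lemma of Section~\ref{secsuffass} in turn by invoking Theorems~\ref{thmtailnew}, \ref{thmmoments} and \ref{th: int cbi dens}. Since the density approximation lemmas impose (i) boundedness of $g$ (and of certain partial derivatives), (ii) existence of a finite exponential moment, (iii) sufficient smoothness of $g$, and (iv) an upper bound on the decay of the auxiliary weight $w$, the goal is to show that conditions (i)--(iii) are delivered by the affine theorems whenever their parametric hypotheses hold, whereas (iv) is a pure requirement on $w$ which is to be separately verified for the explicit weight functions in Section~\ref{sec:weightfunctions}.

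First, I would match Theorem~\ref{thmtailnew} against Lemmas~\ref{lemass2simple}--\ref{lemass2}. The theorem asserts that under the full-rank condition on $\mathcal K$ and the inequality \eqref{eq:jakobsebastian}, the conditional density $g(\xi)$ of $X_t\mid X_0=x$ is of class $C^p$ on $\domain=\mathbb R^m_+\times\mathbb R^n$ and that all its partial derivatives of orders $0,\dots,p$ vanish as $\|\xi\|\to\infty$. A continuous function that vanishes at infinity is bounded, so both $g$ and each $\partial_{x_i}^p g(x,y)$ are globally bounded on $\domain$, which in particular yields the boundedness of $g$ required in all three lemmas and the boundedness of $\partial_{x_i}^p g$ on $\mathcal I\times \mathbb R^n$ required in Lemma~\ref{lemass2}. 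Second, I would invoke Theorem~\ref{thmmoments}: whenever the jump measures admit exponential moments in a neighborhood of the origin and the Riccati system \eqref{ricceq} admits a solution on $[-\epsilon_1,\epsilon_1]^m\times[-\epsilon_2,\epsilon_2]^n$ over the time horizon $[0,T)$, inequality \eqref{eqexpqXt} supplies the finite exponential moment of $g$ of the form demanded in Lemmas~\ref{lemass2simple} (with $\epsilon_0=\min(\epsilon_1,\epsilon_2)$), \ref{lemass2spec} (with $m=1$, $n=0$), and \ref{lemass2} (with the product structure of the exponent matching directly).

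Third, for Corollary items concerning the integrated process, I would appeal to Theorem~\ref{th: int cbi dens}: it delivers the $C^p$-regularity and decay at infinity of the marginal density of $\int_0^t X_s\,ds$ on $\mathbb R_+$ under \eqref{eq:jakobsebastian1}, and the finite exponential moment of this density is again obtained from Theorem~\ref{thmmoments} applied to the bivariate affine process $(X, \int X\,ds)^\top$ (the existence of the relevant Riccati solution for the second coordinate follows from the one-dimensional analysis already invoked). With the hypotheses on $g$ in hand, the final step is to observe that Assumption~\ref{ass1} is automatic from the exponential moment of $w$ via Lemmas~\ref{lempolydense}/\ref{lemprod}, and Assumption~\ref{ass2} then follows by Lemmas~\ref{lemass2simple}--\ref{lemass2} once a weight $w$ satisfying the decay estimates in \eqref{lemass2simplecond}, \eqref{lemass2specass} or \eqref{lemass2ass} is chosen.

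The main obstacle is essentially bookkeeping rather than substantive: one must check that the quantifier structure in \eqref{eqexpqXt} (a joint exponential moment in the $Y$- and $Z$-blocks) exactly matches the product form of the exponential moments required in Lemmas~\ref{lemass2simple}--\ref{lemass2}, and that the smoothness exponent $p$ supplied by \eqref{eq:jakobsebastian} or \eqref{eq:jakobsebastian1} can be chosen consistent with the polynomial decay exponent appearing in \eqref{lemass2specass}--\eqref{lemass2ass}. Provided these matches are done carefully, no additional analysis is required and the corollary is established.
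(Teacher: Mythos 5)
Your proposal is correct and follows essentially the route the paper intends: the paper states this corollary without a formal proof, treating it as a rephrasing of Theorems~\ref{thmtailnew}--\ref{th: int cbi dens}, and the matching you describe (boundedness and $C^p$-smoothness of $g$ from Theorem~\ref{thmtailnew} or \ref{th: int cbi dens}, exponential moments from Theorem~\ref{thmmoments}, decay conditions checked separately for the explicit weights) is exactly the verification the paper carries out concretely in Step~3 of Example~\ref{example:laguerre}. Your closing remark about reconciling the smoothness exponent $p$ with the polynomial decay exponent of $w$ is the same bookkeeping the paper resolves there via the condition $D\le 2p$.
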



\section{Examples of Auxiliary Density Functions}\label{sec:weightfunctions}

For our applications of the polynomial density approximations to affine models we shall make the following specific choices for the auxiliary density function $w$. For positive
coordinates we use the Gamma density
\begin{equation}\label{eq:gammadensity}
 \gamma(\xi;D)=\frac{e^{-\xi}\xi^{D}}{\Gamma\brak{1+D}}
\end{equation}
of a $\Gamma(1+D, 1)$-distributed random variable. Here, $\Gamma [\cdot]$ denotes the Gamma function. It is easily seen that conditions \eqref{eps0mom}--\eqref{lemass2ass} are satisfied for the appropriate parameters $p$, $\epsilon_0$ and $\epsilon_1$, respectively.

 For real-valued coordinates we employ the bilateral Gamma density from \citet{kuechlertappe08a}.
The corresponding family of distributions nests, for example, the Variance Gamma distribution as a special case.
It has very flexible shapes \citep{kuechlertappe08b}. For the purposes of this paper we make use of a constrained,
standardized version with mean centered at zero, unit variance, zero skewness, and excess kurtosis $C>0$.
We denote this standardized bilateral Gamma distribution with $\Gamma _b(C)$. Its characteristic function is given by
\[ \Phi _{\Gamma _b}(u;C)=216^{\frac{1}{C}} \left(\frac{1}{6-C u^2}\right)^{3/C} , \quad u\in \reals, \quad C\in \mathbb{R}_+.\]
By \citet[][eq. 3.6]{kuechlertappe08b} the bilateral Gamma distribution has a density given by
\begin{equation}\label{eq:bilatgammadensity}
 \gamma_b(\xi; C)=\frac{2^{\frac{3 (C-2)}{4 C}} 3^{\frac{C+6}{4 C}} C^{-\frac{C+6}{4 C}} |\xi |^{\frac{3}{C}-\frac{1}{2}}
   K_{\frac{3}{C}-\frac{1}{2}}\left(\frac{\sqrt{6} |\xi |}{\sqrt{C}}\right)}{\sqrt{\pi }\, \Gamma \left(\frac{3}{C}\right)},
\end{equation}
where $K_n(\xi)$ denotes the modified Bessel function of the second kind. It follows from \citet[Section 6]{kuechlertappe08b} that conditions \eqref{eps0mom}--\eqref{lemass2ass} are satisfied for the appropriate parameters $\epsilon_0$ and $\epsilon_2$, respectively.
The special case with excess kurtosis $C=1/3$ leads to the following simple expression for the density $\gamma_b(\xi)=\gamma_b(\xi\,;C=1/3)$, since for half-integer indices the modified Bessel
functions evaluate to elementary functions
\begin{multline}\label{eq:bilatgammadensityconstrained}
 \gamma_b(\xi)=\frac{27 e^{-3 \sqrt{2} |\xi|}}{1146880 \sqrt{2}} \cdot \left(7776 \sqrt{2} |\xi|^7+83160 \sqrt{2} |\xi|^5+180180 \sqrt{2} |\xi|^3 \right.\\
 \left. +75075 \sqrt{2} |\xi|  +1296 \xi^8+45360 \xi^6+207900 \xi^4+210210 \xi^2+25025\right).
\end{multline}


Orthonormal polynomial bases can be constructed for any auxiliary density function $w$ which has finite exponential moment \eqref{eps0mom} by the following Gram-Schmidt process, which is also used in the proof of Lemma \ref{lempolydense}.

\begin{algo}[Gram-Schmidt Process]\label{algo:gramschmid}
 \begin{align*}
  H_0&=1 \\
  \widetilde H_\alpha&=\xi^\alpha-\sum _{0\leq|\beta|\leq|\alpha|,\,\beta\neq\alpha} \bracket{\xi ^\alpha,   H_\beta(\xi)}_{\mathcal L^2_w}  H_\beta(\xi)\\
  H_\alpha&=\frac{\widetilde H_\alpha}{\norm{\widetilde H_\alpha(\xi)}_{\mathcal L^2_w}}\quad \textrm{  (normalization) .}
 \end{align*}
\end{algo}
Notice that $\deg H_\alpha = \deg \xi^\alpha = |\alpha|$, which is due to the linear independence of the set of monomials $\{\xi^\alpha\mid\alpha\in\mathbb N_0^d\}$. Below are the first five orthonormal polynomials for the Gamma and the bilateral Gamma densities $\gamma$ and $\gamma_b$ introduced above.

\begin{example}\label{exGamma}
The non-normalized orthogonal polynomials for the Gamma density $\gamma$ are the generalized Laguerre polynomials, the first five of which are
\begin{equation}\label{eq:laguerrepoly}
\begin{split}
  H_0^\gamma(\xi)&=1, \\
\widetilde H_1^\gamma(\xi)&= -\xi+D +1\\
\widetilde H_2^\gamma(\xi)&= \frac{1}{2} \left(\xi^2-2 \xi (D +2)+D ^2+3 D +2\right) \\
\widetilde H_3^\gamma(\xi)&= \frac{1}{6} \left(-\xi^3+3 \xi^2 (D +3)-3 \xi \left(D ^2+5 D
   +6\right)+D ^3+6 D ^2+11 D +6\right) \\
\widetilde H_4^\gamma (\xi)&=\frac{1}{24} (\xi^4-4 (D +4) \xi^3+6 (D +3) (D +4) \xi^2\\
          &-4 (D +2) (D +3) (D +4) \xi+(D +1) (D +2) (D
   +3) (D +4))n.
\end{split}
\end{equation}
The normalization constants are given by
\begin{equation}\label{eq:laguerreortho}
 HO_n^\gamma=\norm{\widetilde H^\gamma_n(\xi)}_{\mathcal
  L^2_\gamma}=\sqrt{\frac{\prod_{i=1}^n(i+D)}{n!}}.
\end{equation}
\end{example}

\begin{example}\label{exbGamma}
For the standardized bilateral Gamma density $\gamma_b$ in \eqref{eq:bilatgammadensity}, the first five non-normalized orthogonal polynomials are
\begin{equation}\label{eq:bilatgammapoly}
\begin{split}
 H_0^{\gamma _b}(\xi)&=1 \\
\widetilde H_1^{\gamma _b}(\xi)&= \xi \\
\widetilde H_2^{\gamma _b}(\xi)&= \xi ^2-1 \\
\widetilde H_3^{\gamma _b}(\xi)&= (-C-3) \xi +\xi ^3 \\
\widetilde H_4^{\gamma _b}(\xi)&=-\frac{2 \left(5 C^2+21 C+18\right) \left(\xi ^2-1\right)}{3
   (C+2)}-C+\xi ^4-3,
\end{split}
\end{equation}
and the corresponding normalization constants $HO_n^{\gamma _b}=\norm{\widetilde H^{\gamma_b}_n(\xi)}_{\mathcal
  L^2_{\gamma_b}}$ are given by
\begin{equation}\label{eq:bilatgammaortho}
\begin{split}
 HO_0^{\gamma _b}&=1 \\
HO_1^{\gamma _b}&= 1 \\
HO_2^{\gamma _b}&= \sqrt{C+2} \\
HO_3^{\gamma _b}&= \sqrt{\frac{7 C^2}{3}+9 C+6} \\
HO_4^{\gamma _b}&=\sqrt{\frac{2 \left(55 C^4+363 C^3+822 C^2+756 C+216\right)}{9 (C+2)}}.
\end{split}
\end{equation}
\end{example}

\begin{example}[Product Measure]\label{ex: product measure}
Define the product density $w_{\gamma \gamma _b}$ with support on $\reals _+ \times \reals$  by
\begin{equation*}
 w_{\gamma\gamma_b}(\xi_1, \xi_2; C, D)=\gamma(\xi_1, D)\gamma_b(\xi_2, C)
\end{equation*}
with Gamma density $\gamma$ defined in \eqref{eq:gammadensity} and bilateral Gamma density $\gamma _b$ defined in \eqref{eq:bilatgammadensity}. Combining Lemma~\ref{lemprod} and Examples~\ref{exGamma} and \ref{exbGamma}, we obtain the corresponding orthonormal basis of polynomials
$\left\{ H_{n_1}^{\gamma} \cdot
H_{n_2}^{\gamma _b}\mid (n_1,
n_2)\in\mathbb N_0^2\right\}$.
\end{example}

\section{Relation to Existing Approximations}\label{sec:relationexistingapproximations}
In this section we recall facts about closed-form density
approximations from previous literature and relate them to the
density expansions of the present paper. A short summary of the
capabilities and limitations of the different methods is reported in
Table \ref{tab:comparison}.

The closest methodology to the one introduced in Section
\ref{sec:densityapproximations} \citet{aitsahalia02}. One of the key
steps is to transform the original process in such a way that a
Gaussian-weighted $\LS ^2$ expansion converges. This is motivated by
an analogy to the central limit theorem \citep[see
also][Introduction]{aitsahaliayu05}: the sampling interval (time
between observations) $\Delta$ plays the role of the sample size $n$
in the central theorem; conditional on a correct standardization a
$N(0,1)$ density turns out to be the correct limiting distribution
as $n\rightarrow \infty$ (in the central limit theorem) and as
$\Delta \rightarrow 0$ (for the stochastic process). The correcting
Hermite polynomials (the pseudo likelihood ratio) then account for
the fact that $\Delta$ is not 0. \citet{aitsahalia02} applies two
transformations. The first change of variables yields a unit
diffusion process through the Lamperti transform. For univariate
diffusions it can be shown that such a transformation always exists.
This step introduces nonlinearities in the drift. The resulting
process is then centered, and scaled in time. Consequently, a
Gaussian-weighted $\LS ^2$ expansion converges {\it uniformly} to
the true, unknown transition density.  This strong convergence
result--which of course exceeds the mere $\LS ^2 $ convergence-- is
proved by using a representation of the true, unknown, transition
density in terms of a Brownian bridge functional from
\citet{Rogers}. Due to the nonlinearities in the drift, the
coefficients of the \citet{aitsahalia02} Hermite expansion are
generally not known in closed form, however. In practice they are
approximated using a Taylor expansion in time in terms of the
infinitesimal generator of the process. This is a key difference to
the setting of the present paper, where expansions are constructed
precisely such that their coefficients are linear in polynomial
moments--and those are known for affine processes without
approximation error.

In the multivariate case, however,  a Lamperti transform is rarely possible, since most applications call for stochastic volatility models which are irreducible \citep[][Proposition 1]{aitsahalia08}. An entirely different strategy is therefore pursued in \citet{aitsahalia08} for the irreducible multivariate case, where the log likelihood is expanded in, both, time, and space, so that the coefficients of the expansion may be computed from the Kolmogorov forward and backward equations. This approach is adopted by \citet{yu07} with the difference that he also considers jump-diffusions and approximates the transition density itself, rather than the log transition density.

The saddlepoint approach in \citet{aitsahaliayu05} is fundamentally
different. It (approximately) solves the Fourier inversion problem
by expanding the cumulant generating function about the
saddlepoint\footnote{For a stochastic process $X$ denote by $K(t,
u\mid x_0)=\log \mathbb E\left[ e^{ u^\top X_t} \mid X_0=x_0\right]$
the cumulant generating function.  Suppose  $X_t|X_0=x_0$ has an
absolutely continuous law. For any state $x$ the saddlepoint is
defined as the solution $\hat{u}=\hat{u}(t, x, x_0)$ in $u$ to the
implicit equation $\partial _u K(t, u\mid x_0)=x$.}, rather
than making use of the Kolmogorov forward and backward equations.
The maintained assumption here is that the cumulant generating
function is available, even though for diffusions \citet[Section
4]{aitsahaliayu05} circumvent this problem by using a Taylor series
expansion for small times along the lines of \citet{aitsahalia02}
for nonlinear moments. Though the saddlepoint approach and this
paper both facilitate expansion techniques, the objects of the
expansion are different and the formulae are unrelated. Saddlepoint
approximations are extremely accurate even for low orders
\citep[][Fig. 2]{aitsahaliayu05}. The price to be paid for this
precision is the computational burden of having to solve numerically
for the saddlepoint for every pair of forward and backward
variables.
\begin{table}
 \begin{tabular}{lccccc}
\hline \hline
 & \multicolumn{4}{c}{Approximations} \\
    & AS02 & ASY05 & Y07 & AS08 & this paper \\
\hline
multivariate & No & Yes & Yes & Yes  & Yes \\
everywhere positive & No & No & No & Yes & No \\
integrates to one & No & No & No & No & Yes  \\
jumps & No & Yes & Yes & No & Yes \\
\hline \hline
\end{tabular}
\caption{{\bf Comparison of Closed-Form Transition Density Approximations. }AS02 refers to \citet{aitsahalia02}, ASY05 to \citet{aitsahaliayu05}, Y07 to \citet{yu07}, and AS08 to \citet{aitsahalia08}.}
\label{tab:comparison}
\end{table}

\section{Applications}\label{sec:applications}
In the following we present applications which highlight the
usefulness of the transition density approximations developed in
this paper. For the empirical investigations considered below we
find that there is a trade-off between numerical accuracy and the
order of the expansion. Higher-order expansions may perform worse
than low-order expansions due to numerical errors that are  induced
by the limited numeric precision of the computer environment in
representing very large or very small numbers. As a general
guideline we suggest matching as many moments (cumulants) as
possible when choosing $\LS ^2$ weights, and stopping the expansion
at a relatively low order such as $J=4$. For the present section  we adopt notation  used conventionally in finance and econometrics. In particular we  deviate from \citet{duffiefilipovicschachermayer03} notation. The time interval between  between observations is generally denoted by $\Delta$.

We strongly recommend checking the above theoretical foundations for the validation of Assumptions \ref{ass1} and \ref{ass2} in numerical applications, as outlined in Example
\ref{example:laguerre} below.



\subsection{Basic Affine Jump-Diffusion (BAJD)}\label{sec:bajd}
We first consider a square-root process with exponentially distributed jumps.  This process has
been recently used in papers that study portfolio credit risk
\citep{duffiegarleanu01, mortensen06,eckner09, feldhutter08} where
it is termed basic affine jump-diffusion (BAJD), and in a bivariate
form in single-name credit \citep{schneidersoegnerveza09}. It can be described in SDE form
\begin{equation}\label{eq:ajd}
 dY_t=(\kappa \theta - \kappa  Y_t)\,dt+\sigma \sqrt{Y_t}dW_t +dK_t.
\end{equation}
The intensity of the compound Poisson process $K$ is $l \geq 0$, and
the expected jump size of the exponentially distributed jumps is
$\nu \geq 0$. The set
of parameters we denote by  $\varrho_Y=\brat{\kappa \theta,
\kappa, \sigma, \nu, l}$ and the domain of the process $\domain =\reals _+$. By Theorem \ref{thmtailnew}, $2 \kappa \theta > \sigma ^2$ ensures existence of transition densities.

\begin{example}[Developing an $\LS _\gamma ^{2}$ Expansion for the BAJD]\label{example:laguerre}

To exemplify the necessary steps to develop a density expansion, we consider here an explicit example and compute an order $J=4$ density expansion for the BAJD process from eq. \eqref{eq:ajd}.
The $\mathcal{L}^2$ weight we use here is a Gamma distribution $\Gamma(1+D,1)$ with density function $\gamma$ from
eq. \eqref{eq:gammadensity}.

\paragraph{Step 1. Computing Conditional Moments: } The generator of the BAJD is
\begin{equation*}
\mathcal Af(x)=(\kappa \theta -\kappa x)\frac{\partial f(x)}{\partial x} + \frac{1}{2}\sigma ^2 x \frac{\partial ^{2}f(x)}{\partial x^{2}}+l\int _{\reals _+ }(f(x+\xi)-f(x))\frac{1}{\nu}e^{-\frac{\xi}{\nu}}d\xi.
\end{equation*}
Hence the matrix $Q=(q_{ij})$ from \eqref{eqQdef} relative to the canonical basis $\brat{1,x,x^2,x^3,x^4}$ equals
\[Q=
 \left(
\begin{array}{ccccc}
 0 &  \kappa \theta +l \nu &  2 l \nu ^2  & 6 l \nu ^3 & 24 l \nu ^4 \\
 0  & -\kappa  & \sigma ^2+2 \kappa \theta +2 l \nu &  6 l \nu ^2 & 24 l \nu ^3 \\
 0  & 0 &   -2 \kappa  & 3 \sigma ^2+3 \kappa \theta +3 l \nu & 12 l \nu ^2 \\
  0 & 0 & 0  & -3 \kappa  & 6 \sigma ^2+4 \kappa \theta +4 l \nu \\
 0 & 0  & 0 & 0  & -4 \kappa
\end{array}
\right).
\]
Note the upper-triangular form. A symbolic mathematics software package such as Mathematica or Maple will be able to compute the matrix exponential $e^{Q  \Delta}$ in closed form. The conditional moments $\moment_n(y_0, \Delta, \varrho _Y)=\ev{Y_\Delta^{n} \mid Y_0=y_0, \varrho _Y}$ may then be obtained by plugging into formula \eqref{eq:momentformula}. We obtain the conditional moments as polynomials of order $\leq 4$ in the backward variable $y_0$. Below we will suppress dependence of the moments on $y_0, \Delta, \varrho _Y$ to lighten notation.

\paragraph{Step 2. Scaling the Process and Computing the Coefficients: } Having computed the first four conditional moments, we now introduce a scaled process $\bar{Y}_{\Delta}=\frac{Y_\Delta \moment_1}{\moment_2 - \moment_1^2}$ and set $D=\moment_1^2/(\moment_2-\moment_1^2)-1$. Note that
\[
\mathbb E[\bar{Y}_{\Delta}\mid Y_0=y_0,\varrho_Y]=\mathbb V [\bar{Y}_{\Delta}\mid Y_0=y_0,\varrho_Y]=D+1.
\]
Hence, in view of Lemma \ref{thm:momentmatching} we match the first two moments of $\bar{Y}_{\Delta}$
with the ones of the standardized gamma density $w=\gamma(\xi;D)$ from eq.~\eqref{eq:gammadensity}, since expectation
and variance of $\gamma(\xi;D)$ equal $D+1$ as well.

By using the corresponding orthogonal polynomials $\widetilde H_n^{\gamma}$ from \eqref{eq:laguerrepoly} along with their normalization constant  $HO_n^{\gamma}$ from eq. \eqref{eq:laguerreortho} we obtain
the coefficients of the density approximation \eqref{eq:truncatedexpansion}: for each $n\geq 0$ we have
\begin{equation}
c_n(y_0, \Delta, \varrho _Y)=\frac{\ev{\widetilde H_n^{\gamma}(\bar{Y}_\Delta)\mid Y_0=y_0,\varrho_Y}}{HO_n^\gamma}.
\end{equation}
In particular, the first five coefficients are of the following explicit form:
\begin{align*}
 c_0(y_0, \Delta, \varrho _Y)&=1 \\
c_1(y_0, \Delta, \varrho _Y)&= 0 \\
c_2(y_0, \Delta, \varrho _Y) &= 0 \\
c_3(y_0, \Delta, \varrho _Y) &= \frac{(D+1) \left((D+2) (D+3)-\frac{(D+1)^2 \mu _3}{\mu _1^3}\right)}{\sqrt{6} \sqrt{(D+1) (D+2) (D+3)}} \\
c_4(y_0, \Delta, \varrho _Y) &= \frac{(D+1)^4 \mu _4+(D+4) (D+1) \mu _1 \left(3 (D+2) (D+3) \mu _1^3-4 (D+1)^2 \mu _3\right)}{2 \sqrt{6} \sqrt{(D+1) (D+2) (D+3) (D+4)} \mu _1^4}.
\end{align*}
Note that due to the chosen scaling, the deforming polynomial (the pseudo likelihood ratio) does not contribute to the density approximation for the first two orders as predicted by Lemma \ref{thm:momentmatching}.

\paragraph{Step 3. Verification of Assumption~\ref{ass2}: }
We denote by $g$ and $\bar g$ the density of $Y_\Delta$ and $\bar Y_\Delta$, respectively. The existence of $g$ and therefore of $\bar g$ is ensured by requiring
\[
\kappa \theta > \frac{\sigma ^2}{2}>0,
\]
by Theorem \ref{thmtailnew}. By the same result, $g$ and $\bar g$ are of class $C^p$ for the greatest nonnegative integer $p$ satisfying
\begin{equation}\label{eq:thebound}
 p<\frac{2\kappa\theta}{\sigma^2}-1.
\end{equation}
On the other hand, using Theorem~\ref{thmmoments} one can verify numerically, by solving the corresponding Riccati differential equations, that
\begin{equation}\label{eq latter}
\mathbb E[e^{\bar Y_\Delta}]=\mathbb E[e^{ \frac{(D+1)}{\mu_1}Y_\Delta}]<\infty.
\end{equation}
This implies finite polynomial moments of $\bar g$ and $g$, and therefore justifies the calculations in Steps 1 and 2, in particular. Note that the Gamma density $w(\xi)=\gamma(\xi;D)$ satisfies $\sup_{x\in[0,1]}x^{D}/w(x)<\infty$ and $\sup_{x\ge 1} \e^{-x}/w(x)<\infty$. In view of \eqref{eq latter}, Lemma~\ref{lemass2spec} implies validity of Assumption~\ref{ass2}, that is $\bar g/w \in\mathcal L^2_{w}$, once $D\le 2p$. By \eqref{eq:thebound}, the latter holds if and only if\footnote{$\lceil x\rceil$ denotes the smallest integer which is greater than or equal to $x$.}
\[ \lceil D/2\rceil < \frac{2\kappa\theta}{\sigma^2}-1,\]
which again can easily be checked numerically.

\paragraph{Step 4. Putting Everything Together: }
Accounting for the change of variable $\bar{y}(y)=\frac{y \moment_1}{\moment_2 - \moment_1^2}$, the density proxy equals
\begin{equation}
 g_Y^{(4)}(y\mid y_0, \Delta, \varrho _Y)=\gamma (\bar{y}(y)) \cdot \sum_{i=0}^{4}c_i(y_0, \Delta, \varrho _Y) H_i^\gamma(\bar{y}(y))\cdot \frac{\moment_1}{\moment_2-\moment_1^2}.
\end{equation}
Figure \ref{fig:densitypic} shows how the polynomials $c_i(y_0, \Delta, \varrho _Y) H_i^\gamma(\bar{y}(y))$ in the pseudo likelihood ratio deform the auxiliary density $w=\gamma$ into the right shape.
\end{example}

\subsection{Heston's Model}\label{sec:hestonmodel}
The \citet{Heston1993} stochastic variance model  has been particularly used for the pricing of equity (index) options. The model for the log stock price $X$ and its stochastic variance $V$ can be realized as solution of the following SDE
\begin{equation}\label{eq:heston}
\begin{split}
dV_t&= (\kappa \theta_V - \kappa_V  V_t)\,dt+\sigma \sqrt{V_t}dW_t^V \\
 dX_t&=(\kappa \theta _X - \frac{1}{2}V_t)dt + \sqrt{V_t}\brac{\rho dW^V_t+\sqrt{1-\rho ^2}dW^X_t},
\end{split}
\end{equation}
with $(W^V, W^X)$ being a two-dimensional standard Brownian motion. The domain $\domain$ of the process equals $\reals _+ \times \reals$. With $2 \kappa \theta _V > \sigma ^{2}$ and $\abs{\rho}<1$ Theorem \ref{thmtailnew}  guarantees existence of transition densities. Note
that it would be perfectly possible to enrich Heston's model above
with jumps in both factors (this has been done for example in
\citet{duffiepansingleton00}, \citet{erakerjohannespolson03}, and
\citet{eraker04}), to multiple variance factors, or even a
matrix-valued variance process as in
\citep{fonsecagrassellitebaldi08}.

The correlation parameter $\rho$
above is a device to model the leverage effect which is partly
responsible for the skew in option prices. Figure \ref{fig:leverage}
displays, using an order 4 expansion, how the skew of the density
may be altered by decreasing the $\rho$ parameter.
\begin{figure}[ht]
\begin{center}
\input{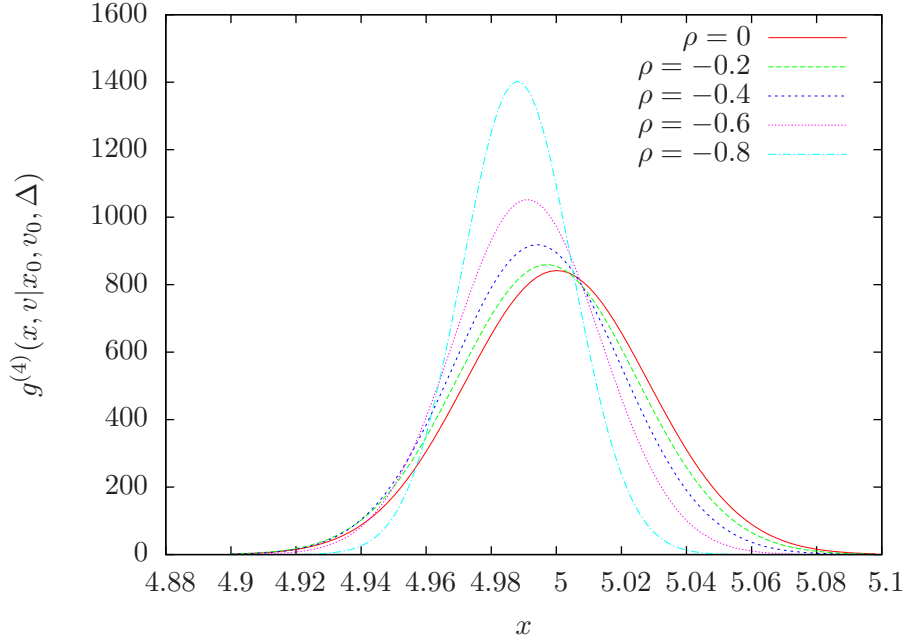}
\caption{\label{fig:leverage}\textbf{The Effect of Leverage: } The figure shows the effect on skewness of negative correlation between the log stock price and its stochastic variance. Depicted are approximated transition densities $g^{(4)}_{VX}$ of the Heston model for different values of the correlation parameter $\rho$ for fixed $v=0.043$. The parameters that generated the picture were  $\Delta=1/52, \kappa \theta_V=0.04, \kappa_V=1, \sigma=0.2, \kappa \theta _X=0.03, x_0=5, v_0=0.04$.}
\end{center}
\end{figure}

For the bivariate Heston model, to compute conditional moments up to order two using formula \eqref{eq:momentformula}, the canonical basis is given by $\brat{1, v, x, v^2, vx, x^2}$ and the corresponding $Q$ matrix, analogous to \eqref{eqQdef}, is
\[ Q =\left(
\begin{array}{cccccc}
 0 & \kappa \theta _V &  \kappa \theta _X & 0 & 0 & 0 \\
 0  & -\kappa _V  & -\frac{1}{2} & \sigma ^2+2 \kappa \theta _V & \kappa \theta _X +\rho  \sigma & 1 \\
 0 & 0 & 0 & 0 & \kappa \theta _V & 2 \kappa \theta _X \\
 0 & 0  & 0 & -2 \kappa _V  & -\frac{1}{2} & 0 \\
 0 &  0 & 0  & 0 & -\kappa _V  & -1 \\
 0 & 0 & 0  & 0 & 0 & 0
\end{array}
\right).
\]

\begin{example}[Standardizing and Scaling the Heston Model]\label{example:heston}

Our goal is to work within a $\mathcal{L}^{2}$ space weighted with a  product measure $w:\reals _+ \times \reals \mapsto \reals _{+}$
\begin{equation*}
w(\xi,\eta)d(\xi, \eta)= \gamma (\xi)d\xi  \cdot  \gamma _b(\eta)d\eta,
\end{equation*}
composed of $\Gamma(D+1,1)$ and $\Gamma_b(C)$ densities, from definitions \eqref{eq:gammadensity} and \eqref{eq:bilatgammadensity}, respectively. In this  space, we may use the polynomials from eqs. \eqref{eq:laguerrepoly} and \eqref{eq:bilatgammapoly}. Lemma \ref{lemprod} ensures that the product of the polynomials forms an ONB in the $\LS _w^2$ space. Acknowledging Lemma \ref{thm:momentmatching} we want to make sure that we match as many moments as possible to optimize the quality of the approximation. Below we show how this transformation $\varsigma$ is composed.

We introduce lighter notation by defining $U _t=( V_t,X_t)$ and for the first two moments of $( V_t, X_t)$
\begin{equation*}
\ev{U _t\mid U _0=u , \varrho _{VX}}=\begin{pmatrix}\mu_1\\ \mu_2 \end{pmatrix}
\end{equation*}
and
\begin{equation*}
\vr{U _t \mid U _0=u , \varrho _{VX}}=\begin{pmatrix}a_1&b\\b&a_2\end{pmatrix}.
\end{equation*}
For demeaning and block-diagonalizing define
\begin{equation*}
\varsigma_1(u)= \Upsilon_1 u+\upsilon_1
\end{equation*}
where
\begin{equation*}
\Upsilon_1=\left(\begin{array}{ll}1&0\\-b/a_1&1\end{array}\right), \quad\upsilon_1=\begin{pmatrix}
                                                                                    0 \\ b\mu_1/a_1-\mu_2
                                                                                   \end{pmatrix}.
\end{equation*}
Then $\varsigma_1(U_t)$ has first two moments of the form
\begin{align*}
\ev{\varsigma_1(U _t) \mid U _0=u , \varrho _{VX}}&=\left(\begin{array}{ll}\mu_1\\0\end{array}\right), \\
\vr{\varsigma_1(U _t) \mid U _0=u , \varrho _{VX}}&=\left(\begin{array}{ll}a_1&0\\0&-b^2/a_1+a_2\end{array}\right).
\end{align*}
The next transformation scales the process into the optimal form (according to Lemma \ref{thm:momentmatching})
\begin{equation*}
\varsigma_2(u)= \Upsilon_2 \cdot u,
\end{equation*}
where
\begin{equation*}
\Upsilon_2=\quad\left(\begin{array}{ll}\mu_1/a_1&0\\0&1/\sqrt{-b^2/a_1+a_2}\end{array}\right).
\end{equation*}
Then $\varsigma_2\circ\varsigma_1(U _t)$ has first two moments of the
form
\begin{align*}
\ev{\varsigma_2\circ\varsigma_1(U _t) \mid U _0=u , \varrho _{VX}}&=\left(\begin{array}{ll}\mu_1^2/a_1\\0\end{array}\right), \\
\vr{\varsigma_2\circ\varsigma _1(U _t) \mid U _0=u , \varrho _{VX}}&=\left(\begin{array}{ll}\mu_1^2/a_1&0\\0&1\end{array}\right),
\end{align*}
and choosing $D=\mu_1^2/a_1-1$ the bivariate orthogonal
expansion of the density of $\varsigma_2\circ\varsigma_2(U _t)$ may be performed in
terms of the polynomials introduced in \eqref{eq:laguerrepoly} and \eqref{eq:bilatgammapoly}. By the transformation the polynomial moments up to second order induced by $w$ agree with the moments of $\varsigma_2\circ\varsigma_1(U _t)$ and the moment-matching Lemma \ref{thm:momentmatching} applies up to order 2.
We have used
\begin{equation*}
\varsigma: u\mapsto\Upsilon_2\circ(\Upsilon_1 u-\upsilon_1),
\end{equation*}
and its inverse is
\begin{equation*}
\varsigma^{-1}: u\mapsto\Upsilon_1^{-1}\Upsilon_2^{-1}u+\Upsilon_1^{-1}\upsilon_1.
\end{equation*}
The parameter $C$ in \eqref{eq:bilatgammadensity} is set to the exact excess kurtosis of the transformed log stock process and the expansion may be performed analogously to Example \ref{example:laguerre}.
\end{example}


\subsection{CDO Pricing}\label{sec:cdopricing}
In the reduced-form credit risk framework \citep{lando1998}, we model the stochastic default intensity $\lambda$ of a corporation with a positive process such as \eqref{eq:ajd}. Under the pricing measure $\Qmeas$ the default time $\tau$ of a corporation is then taken to be the first jump of an inhomogeneous Poisson process with intensity $\lambda$. More formally we write the survival probability of a corporation (using the short-hand notation $\evt{t}{\cdot}=\mathbb{E}\brak{\cdot \mid \mathcal{F}_t}$)
\begin{equation*}
 \Qmeas \brak{\tau >T\mid \mathcal{F}_t}=1\!\!1_{\brat{\tau>t}}\evt{t}{e^{-\int_t^T\lambda_udu}}.
\end{equation*}
All expectations are with respect to the risk-neutral pricing measure $\Qmeas$.  For the pricing of portfolio credit derivatives, to introduce dependence between different obligors, \citet{duffiegarleanu01} (and subsequently \citet{mortensen06}, \citet{eckner09}, and \citet{feldhutter08}) introduce a factor intensity model
\begin{equation}\label{eq:factorintensity}
 \lambda _{it}=X_{it}+a_i Y_t,
\end{equation}
where $X_{it}$ is a firm-specific (idiosyncratic) intensity factor, and $Y_t$ is a (systemic) factor common to all obligors $i=1, \ldots, n$. We model both $X$ and $Y$ with independent jump-diffusion processes from eq. \eqref{eq:ajd}. For $n$ obligors we must impose $\sum_{i=1}^{n}a_i=1$ to ensure identifiability (see \citet{eckner09}).

The survival probability of obligor $i$ according to model \eqref{eq:factorintensity} is then due to independence of the factors
\begin{equation}\label{eq:survivalprob}
 \Qmeas \brak{\tau_i >T\mid \mathcal{F}_t}=1\!\!1_{\brat{\tau_i>t}}\evt{t}{e^{-\int_t^TX_{iu} \, du}}\evt{t}{e^{-a_i \int_t^TY_u \, du}}.
\end{equation}

Defining $Z_{t,T}=\int_t^{T}Y_s\, ds$ we may write the default probability of the $i$th obligor \emph{conditional} on $Z_{t,T}$ as
\begin{equation}\label{eq:conditionaldefaultprob}
 q_i(Z_{t,T})=\Qmeas _t\brak{t<\tau_i \leq T\mid Z_{t,T}}=1\!\!1_{\brat{\tau _i>t}}\brac{1-\evt{t}{e^{-\int_t^TX_{iu} \, du}}e^{-a_i Z_{t,T}}},
\end{equation}
and denoting with $P^{n}_{t,T}(k\mid Z_{t,T})$ the conditional probability that $k$ of the first $n$ credits in the portfolio default between $t$ and $T$ the recursive algorithm of \citet{andersensideniusbasu03} then develops the number $k$ of defaults conditional on $Z_{t,T}$ as
\begin{equation}\label{eq:recursion}
\begin{split}
 P^{(0)}_{t,T}(k\mid Z_{t,T})&=1\!\!1_{\brat{k=0}} \\
 P^{(m+1)}_{t,T}(k\mid Z_{t,T})&=q_{m+1}(Z_{t,T})P^{(m)}_{t,T}(k-1\mid Z_{t,T})+(1-q_{m+1}(Z_{t,T}))P^{(m)}_{t,T}(k\mid Z_{t,T}),
\end{split}
\end{equation}
for $0\leq k \leq n$ and $0\leq m < n$. The expressions $\evt{t}{e^{-\int _t^TX_{iu} \, du}}$, $i=1,\ldots, n$ are unproblematic, but computing the \emph{unconditional} default probability
\begin{equation*}
 P^{(n)}_{t,T}(k)=\int P^{(n)}_{t,T}(k\mid Z_{t,T})d\Qmeas(Z_{t, T})
\end{equation*}
involves an integration against the density of $Z_{t, T}$. We can get hold of the distribution of $Z_{t,T}$ by investigating the joint evolution of $Y$ from eq. \eqref{eq:ajd} and the integral over $Y$. We therefore embed $Y$ into the two-dimensional affine process $(Y,Z)$ described by
\begin{equation}\label{eq:embeddedsystem}
\begin{split}
 dY&=(\kappa \theta - \kappa  Y_t)\,dt+\sigma \sqrt{Y_t}dW_t +dK_t\\
 dZ_t&=Y_t dt.
\end{split}
\end{equation}
Note that even though the instantaneous covariance matrix of the process \eqref{eq:embeddedsystem} above is only of rank one, this process is a well-defined affine process in the sense of
\citet{duffiefilipovicschachermayer03} as pointed out also in Section \ref{sec:truedensities}. Existence of the marginal transition density of $Z_{t,T} \mid Y_t$ is shown in Theorem \ref{th: int cbi dens} for $\kappa \theta > \sigma ^2$.


In principle the conditional default probabilities from eq. \eqref{eq:recursion}
may be computed using the moment generating function of $Z_{t,T}$. In real world applications $n$ is typically larger than 100, however, and the expressions become intractably large, even for small $k$. In practice, recursion \eqref{eq:recursion} is therefore computed through numerical integration. A test of our density expansion in this setting may therefore be reduced to the question of  how well we can approximate the true moment generating function. Below we outline how this approximation can be done in closed form.

Denote by $\evtj{t}{f(Z_{t,T})}$ the expectation of $f(Z_{t,T})$ with respect to a $J$-order expansion instead of the true density. Considering the functional form of the expansion \eqref{eq:truncatedexpansion}, to approximate the expressions $\evt{t}{e^{-Z_{t,T}a_i}}, \, i=1,\ldots, n$ we note that we need to perform the computation
\begin{align}
 \evj _t\brak{e^{a Z_{t,T}}}&=\int_{\Rplus} e^{a\xi} w(\xi)\sum_{j=0}^{J}c_j H_j(\xi)\,d\xi \\
&=\sum_{j=0}^{J}c_H(j)\int_{\Rplus} e^{a\xi} \xi^jw(\xi)\,d\xi \label{eq:approximgf},
\end{align}
where $c_H(j)$ is implicitly defined as\footnote{In practice the coefficients may be collected using a symbolic mathematics package such as Mathematica or Maple.}
\begin{equation*}
 \sum_{j=0}^{J}c_j H_j(\xi)=\sum _{j=0}^{J} c_H(j)\xi^j.
\end{equation*}
The chosen $\mathcal L^2$ weight $w$ for the approximating transition density is a Gamma distribution. To compute eq. \eqref{eq:approximgf} for a random variable $Z$ that is Gamma distributed $Z\sim\Gamma(\alpha, \theta)$ we note that
\begin{equation*}
\mathbb{E}\brak{e^{a Z}Z^n}=  \frac{\theta^{-n } \Gamma (\alpha-n ) (1-a \theta)^{n -\alpha}}{\Gamma (\alpha)}, \, n\in \mathbb{N}, a\in \Real,
\end{equation*}
where $\Gamma$ denotes the Gamma function.

Figure \ref{fig:mgfpic} shows that for the order 10 expansion the approximation error is numerically zero. The order 2 expansion also works well, with negligible numeric error.
\begin{figure}
\begin{center}
 \scalebox{0.8}{\input{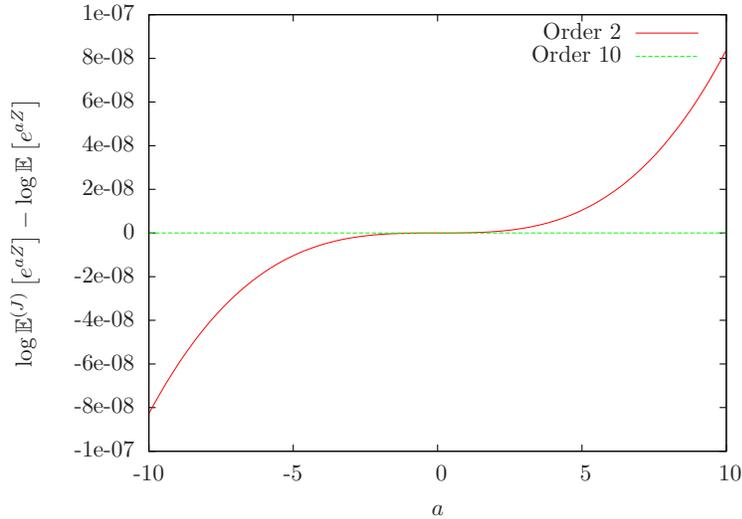}}
\caption{\label{fig:mgfpic}\textbf{True vs. Approximated Moment Generating Function: } The figure shows the log difference between the true moment generating function $\evt{t}{e^{a Z_{t,T}}} $ and the approximated moment generating function $\evtj{t}{e^{aZ_{t,T}}}$  computed for an order 2 and an order 10 expansion of the integrated BAJD from \eqref{eq:embeddedsystem}. The parameters that generated the picture were  $a=1,T-t=5, \kappa \theta=0.00150602, \kappa=0.4648, \sigma=0.01, l=1, \nu=0.0002, y_0=(\kappa \theta + l \nu)/\kappa$. Results are computed using Mathematica and the picture is generated with a numeric precision of 20 digits.}
\end{center}
\end{figure}

\begin{figure}[ht]
\begin{center}
 \subfloat[Order 10 Expansion]{\scalebox{0.8}{\input{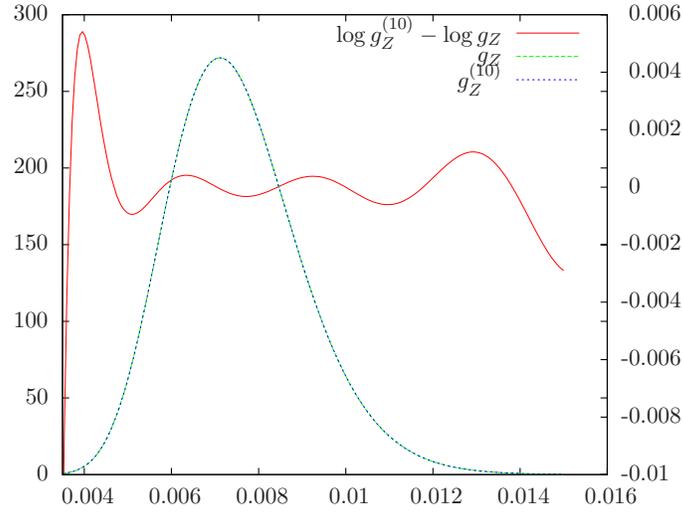}}} \\
 \subfloat[Order 2 Expansion]{\scalebox{0.8}{\input{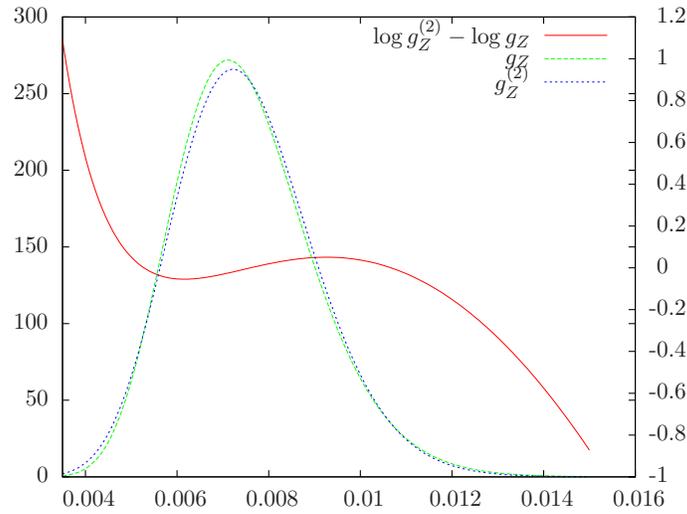}}}
\caption{\label{fig:densitypic}\textbf{Density Plots of the integrated BAJD: } The figure shows the density of $Z_{0,\Delta}\mid y_0, \varrho _Y$ from  specification \eqref{eq:embeddedsystem}. The parameters generating this density are: $\Delta=5, \kappa \theta=0.00150602, \kappa=0.4648, \sigma=0.01, l=1, \nu=0.0002, y_0=(\kappa \theta + l \nu)/\kappa$. The right $y$ axis shows the deviation error to the true density (obtained by Fourier inversion) in percentage terms.}
\end{center}
\end{figure}

\subsection{Likelihood-based Inference}\label{sec:likelihoodinference}
In this section we investigate the performance of the polynomial
density expansions in likelihood-based inference. For discrete,
equally spaced (with time interval $\Delta$) observations $(X_0,
X_1, \ldots, X _N)=X$ of a Markov process $(X_t)_{t \geq 0, X _0=x
_0}$ with domain $\domain$ and parameters $\varrho _X$  we may write
the likelihood function $l_{X}:\domain^{N}\times \varrho_X
\rightarrow \reals _+$ as
\begin{equation}\label{eq:likelihood}
l_{{X}}(X \mid \varrho _X, X_0)=\prod _{i=1}^{N}g_{X}(X_i\mid X _{i-1}, \varrho _{X}, \Delta).
\end{equation}
Denote by
\begin{equation}\label{eq:approxlikelihood}
l_{{X}}^{(J)}(X \mid \varrho _X, X_0)=\prod _{i=1}^{N}g^{(J)}_{X}(X_i\mid X _{i-1}, \varrho _{X}, \Delta)
\end{equation}
the approximate likelihood function using a $J$ order expansion developed in this paper.
The maximum likelihood estimator $\widehat{\varrho} _X$ is obtained as the global maximizer of the likelihood \eqref{eq:likelihood}
\begin{equation}\label{eq:mlestimator}
 \widehat{\varrho} _X=\arg\max _{\varrho _X}\prod _{i=1}^{N}g_{X}(X_i\mid X _{i-1}, \varrho _{X}, \Delta).
\end{equation}
Similarly, for a maximizer obtained from the approximate likelihood $l_{X}^{(J)}$ we write
\begin{equation}\label{eq:qmlestimator}
 \widehat{\varrho} _X^{(J)}=\arg\max _{\varrho _X}\prod _{i=1}^{N}g_{X}^{(J)}(X_i\mid X _{i-1}, \varrho _{X}, \Delta).
\end{equation}
The Bayesian framework \citep[cf.][for reference and comparison to other methodologies]{Robert1994} views the parameters themselves as random variables and is aimed at the posterior density
\begin{equation}\label{eq:posteriordensity}
 \begin{split}
  p(\varrho _X \mid X)&=\frac{l_X(X \mid \varrho _X)}{\int l(X\mid \varrho)\pi(\varrho)d\varrho}\pi(\varrho _X) \\
&\propto l_X(X \mid \varrho _X)\, \pi(\varrho _X) .
 \end{split}
\end{equation}
The prior density $\pi:\varrho _X \rightarrow \reals _+$ expresses the econometrician's personal beliefs and knowledge. Its specification may be fueled by economic intuition, for example that nominal interest rates should be positive, and also  parameter constraints. Note that the expression \eqref{eq:posteriordensity} invokes Bayes theorem and therefore demands that $l_X$ and $\pi$ actually are densities in that they are non-negative functions on the domain of the random variable and integrate to one. This requirement has been challenged to a great extent. The most common violation stems from  expressing uninformedness by setting the prior for a parameter $\varrho _+$ with positive domain proportional to a constant
\begin{equation}\label{eq:approxposterior}
 \pi(\varrho _+)\propto \frac{1}{\sigma _{\varrho _+}}.
\end{equation}
Prior specifications such as the one mentioned above are called  \emph{improper} priors, because their integral does not exist. A less common violation arises from the use of closed-form likelihood expansions within Bayesian inference for Markov processes.\footnote{See \citet{dipietro01} for an introduction to the problem and \citet{stramerbognarschneider09} for MCMC algorithms to overcome it in a very general context.} For the univariate likelihood expansions from \citet{aitsahalia02}  the normalization constant may be evaluated through numeric integration, putting a heavy computational burden on the econometrician. For the multivariate expansions for irreducible models from \citet{aitsahalia08} the normalization constant  does not even exist, because the expansions are purely polynomial. In contrast, the expansions developed in the present paper integrate to one by construction. They share with the expansions from \citet{aitsahalia02} the unpleasant feature that they may become negative, however, even though experience shows that this happens very rarely. For the empirical studies in this paper, for instance,  it has not happened even once.

Subsequently we will denote posteriors where the likelihood is approximated using the approximate likelihood $l_X ^{(J)}$ from \eqref{eq:approxlikelihood} by

\begin{equation*}
p^{(J)}(\varrho _X \mid X)= l_X^{(J)}(X \mid \varrho _X)\, \pi(\varrho _X).
\end{equation*}

To test both methodologies we generate realizations from models \eqref{eq:ajd} and \eqref{eq:heston} through exact simulation methods. We then perform both frequentist and Bayesian inference using our density approximations and the true density (obtained through Fourier inversion of the characteristic function). Frequentist inference is performed on 1,000 data sets generated by model \eqref{eq:heston}, to acquire information about the sampling distribution of the (approximate) maximum likelihood estimators. Bayesian inference is performed on one data set, for the BAJD (eq. \eqref{eq:ajd}) and Heston's model (eq. \eqref{eq:heston}), respectively. We then compare the posterior distribution originating from true density to the posterior distribution generated by the density approximations from this paper.

The simulation for each data set is started from the unconditional mean and then propagated forward 600 data points. We discard the first 100 observations to eliminate impact of the initial condition. To investigate the behavior of our density expansions for different time horizons we choose a monthly observation frequency for the square-root jump-diffusion \eqref{eq:ajd} and weekly observation frequency for the Heston model \eqref{eq:heston}.

To obtain exact draws from the BAJD  we generate exact draws from  $Y_{i} \mid Y_{i-1}$ using \citet[Lemma 2.4]{robertcasella04}. For a uniform random variable $U \sim \mathcal{U}\brac{0,1}$ we exploit that $G_Y^{-1}(U \mid Y_{i-1}, \varrho _Y)\sim G_Y$ for any distribution function $G_Y$. We simulate from \eqref{eq:ajd} using the parameters $\kappa\theta = 0.04, \kappa = 1, \sigma = 0.2, l=3, \nu = 0.01$.

\begin{algo}[Exact draws from BAJD process \eqref{eq:ajd}]\label{algo:exactbajd}
We perform the following procedure starting from $Y_0=\ev{Y_t}$, the unconditional mean, for a realization $Y_i\mid Y_{i-1}$
 \begin{enumerate}
  \item Draw $U\sim \mathcal{U}\brac{0,1}$. Call the realization $u_i$
  \item Use the Newton-Raphson algorithm to compute $y:G_Y (Y\leq y\mid Y_{i-1}, \varrho _Y)=u_i$. In this step we substitute $y=\frac{e^{w}}{1+e^{w}}+c$ to keep $y$ on the positive domain. The floor parameter $c$ we set to $10^{-6}$ to avoid numerical difficulties. The iteration is then
    \begin{equation*}
     w_{j+1}=w_j - e^{-{w_j}} \left(e^{w_j}+1\right)^2 \frac{G_Y(Y\leq c+\frac{e^{w_j}}{e^{w_j}+1}\mid Y_{i-1}, \varrho _Y)-u_i}{g_Y(c+\frac{e^{w_j}}{e^{w_j}+1}\mid Y_{i-1}, \varrho _Y)}
    \end{equation*}
    starting from $w_0= \log \left(\frac{y_{i-1}-c}{c-y_{i-1}+1}\right)$. Stop the iteration at
\begin{equation*}
w^{\star}:\abs{G_Y(Y\leq c+\frac{e^{w^{\star}}}{e^{w^{\star}}+1}\mid Y_{i-1}, \varrho _Y)-u_i}< \varepsilon.
\end{equation*}
Both, $g_Y$, and, $G_Y$ are obtained through Fourier inversion. In our implementation the algorithm terminates after 5 to 6 iterations for $\varepsilon=10^{-6}$.
\item Set $Y_i=c+\frac{e^{w^{\star}}}{e^{w^{\star}}+1}$ increment i and go back to step (1)
 \end{enumerate}
\end{algo}

For Bayesian inference we specify an uninformative prior
\begin{equation}\label{eq:ajdprior}
 \pi(\varrho _{Y})=1\!\!1_{\brat{2 \kappa \theta  > \sigma^2, \sigma>0 , l>0,  \nu >0}}\frac{1}{\sigma  \cdot \kappa \theta \cdot l  \cdot \nu}.
\end{equation}

The Heston parameters are $\kappa_V = 1, \kappa \theta _V = 0.04, \sigma = 0.2, \kappa \theta _X=0.03, \rho=-0.8$.
To obtain exact draws from this model we refer the reader to the algorithm in \citet{broadiekaya06}. For Bayesian inference we specify the prior distribution as
\begin{equation}\label{eq:hestonprior}
 \pi(\varrho _{VX})=1\!\!1_{\brat{2 \kappa \theta _V > \sigma^2,-1<\rho <1, \sigma >0}}\frac{1}{\sigma \cdot \kappa \theta _V}.
\end{equation}
To evaluate the transition density we employ the formulation from \citet{LamoureuxPaseka2005}, which may be evaluated using a single numerical integral, instead of the two-dimensional Fourier integral. This reduction of dimensionality comes at the price of having to evaluate complex-valued special functions, however.


With 1,000 datasets of weekly realizations from the Heston model, for each dataset we obtain parameters $\varrho _{VX}^{\star}$ by maximizing the log likelihood \eqref{eq:likelihood}, respectively the approximate log likelihood \eqref{eq:approxlikelihood}. We use the optimizer \href{http://www.mathematik.tu-darmstadt.de:8080/ags/ag8/Mitglieder/spellucci_de.html}{\texttt{donlp2}} to achieve this task. To relate the density expansions of this paper to existing approximations we perform the estimation experiment with
\begin{itemize}
  \item the true density (obtained through Fourier inversion) denoted by $MLE$
 \item order 4 likelihood expansions developed in this paper using a product measure with a Gamma weight for the variance process and for the log stock variable a
\begin{itemize}
 \item bilateral Gamma weight. Specifically we employ formulation \eqref{eq:bilatgammadensityconstrained}. Estimates are denoted by $BG(4)$
\item Gaussian weight. Estimates are denoted by $G(4)$
\end{itemize}
 \item order 2 closed-form likelihood expansions from \citet{aitsahalia08} denoted by $CF(2)$
\item Gaussian approximation using true conditional moments up to order 2 denoted by $QML$
\end{itemize}
Table \ref{tab:estimationsuccess} reveals that the true likelihood function exhibits problematic behavior for some parameterizations. Only 688 out of 1,000 estimates turned out to be successful. This is due to numerical integration problems that occur in particular for low values of $\sigma$ that arise in the likelihood search. Density expansions developed in this paper are also not entirely unproblematic. Numerical errors from evaluating the pseudo likelihood ratio accumulate and induce spikes that irritate the optimizer's numerical differentiation routines. The Hermite polynomials used for $G(4)$ appear better behaved than the polynomials associated with the bilateral Gamma density used in  $BG(4)$.

Table \ref{tab:hestonbiasmore} reports bias and RMSE of the estimators. The large bias of 0.2255 for the $\kappa$ parameter is a well-established phenomenon that has also been reported in \citet{aitsahaliakimmel05b}. As an overall impression the results suggest that the density approximations developed in this paper exhibit parameter estimates with properties similar to the true ML estimates, while \citet{aitsahalia08} expansions interestingly exhibit lower bias, with the exception of the $\sigma$ parameter, but higher RMSE. In Table \ref{tab:hestonbias} the first column (Mean) in $\widehat{\varrho}_{VX}^{\star MLE}-\varrho _{VX}^{TRUE}$ indicates mean deviation from the true ML estimator and the second column (SD) captures statistical noise in the estimation.  Estimation bias around the MLE for all estimators appears very small. Except for the $CF(2)$ estimator, the noise induced through the density approximations is smaller than the estimation noise of the true $MLE$. Surprisingly,  the QML estimator, a special case of the approximations developed in this paper since it is an order two expansion around a Gaussian, performs remarkably well. All around the BG(4) expansions appear to be the preferable choice. In particular  $\widehat{\sigma}^{\star BG(4)}-\widehat{\sigma}^{\star MLE}$ and  $\widehat{\rho}^{\star BG(4)}-\widehat{\rho}^{\star MLE}$ point to the right direction, the estimators are closer to the true parameters than $MLE$.

The results of the Bayesian inference study also appear promising. Inspecting Figures \ref{fig:hestparms} we see that an order 2 expansion already delivers reasonable results, while the order 4 expansion seems to be even closer to the posterior density obtained from the true density function. To assess how close the posteriors $p_{VX}^{(2)}$ and $p_{VX}^{(4)}$ densities are to the posterior obtained through the true transition density $p_{VX}$ we compute Kolmogorov-Smirnov tests. The results can be seen in Table \ref{tab:ksstats}. They suggest that while $p_{VX}^{(2)}$ appears to be quite different from $p_{VX}$, $p_{VX}^{(4)}$ is statistically almost indistinguishable from the true posterior $p_{VX}$ for the majority of the parameters.

\subsection{Option Pricing}\label{sec:optionpricing}
Heston's model \eqref{eq:heston} is used for option pricing because it may be consistent with the implied volatility skew that can be inferred from market prices. As such it is much more compatible with real data than say, the Black-Scholes model. In stock (index) option pricing the quantity of interest are marginal transition probabilities of the log stock price $X$.
We therefore engineer an approximation directly  around the marginal density of $X_\Delta \mid X_0, V_0$  by expanding $g_{X}$ in $\LS ^2_{\gamma_b}$. We set the constant $C$ from \eqref{eq:bilatgammadensity} to the excess kurtosis of $X$.

Recall that the price of a European call option with maturity $\Delta$ and strike price $K$ is given by
\begin{equation}\label{eq:approxopt}
\begin{split}
 C(\Delta, K)&=e^{-r\Delta}\ev{\brac{e^{X_\Delta}-K}^{+}|X_0=x,V_0=v, \varrho_{VX}} \\
    &=e^{-r \Delta}\brac{\underbrace{\int _{\log K}^\infty e^{\xi} g_{X}(\xi|x, v, \varrho _{VX}, \Delta) d\xi}_{HA(\Delta,K)} - K \underbrace{\int _{\log K}^\infty g_{X}(\xi|x, v, \varrho_{VX}, \Delta) d\xi}_{HB(\Delta,K)}}.
\end{split}
\end{equation}
In accordance with the previous sections we will denote $C^{(J)}(\Delta, K)$, and similarly $HA^{(J)}(\Delta,K)$ and $HB^{(J)}(\Delta, K)$,  the option price computed with $g_{X}^{(J)}$ instead of $g_{X}$. Denoting $\mathbb{Q}(X\leq x)$ the transition probability (and accordingly $\mathbb{Q}^{(J)}(X\leq x)$ the $J$ order approximation of the transition probability) we have that $HB^{(J)}(\Delta,K)=1-\mathbb{Q}^{(J)}(X\leq \log K)$, and using the standardization from Example \ref{example:heston} and the change of variables formula
\begin{equation}\label{eq:hestonb}
\begin{split}
 HB^{(J)}(\Delta,K)&= 1-\int _{-\infty}^{\log K} g^{(J)}_X(\xi|x,v, \varrho_{VX}) d\xi\\
&=1-\frac{1}{\sqrt{a_2}}\int _{-\infty}^{\log K} \gamma_b\brac{\frac{\xi-\mu _2}{\sqrt{a_2}}}\brac{1+\sum _{i=1}^Jc_i(x,v, \varrho _{VX})H^{\gamma _b}_i\brac{\frac{\xi-\mu _2}{\sqrt{a_2}}}}d\xi \\
&=1-\frac{1}{\sqrt{a_2}}\sum _{i=0}^J\Gamma_b\brac{\frac{\log K-\mu _{2}}{\sqrt{a _{2}}},i}\vartheta_i(x,v, \varrho_{VX}).
\end{split}
\end{equation}
Here, $H^{\gamma _b}_i$ are from eqs. \eqref{eq:bilatgammapoly} and \eqref{eq:bilatgammaortho}  and $\vartheta_i(x,v, \varrho_{VX})$ are implicitly defined as
\begin{equation*}
 \brac{1+\sum _{i=1}^Jc_i(x,v, \varrho _{VX})H^{\gamma _b}_i\brac{\frac{\xi-\mu _2}{\sqrt{a_2}}}}=\sum _{i=0}^{J}\vartheta_i(x,v, \varrho_{VX})\xi ^i.
\end{equation*}
The function $\Gamma _b(K,n)=\int _{-\infty}^{K} \xi^n \gamma _b(\xi)d\xi$ is explicit in terms of the Gamma function and regularized generalized hypergeometric functions. The  constituent $HA^{(J)}$ of the approximate call price from eq. \eqref{eq:approxopt} can be computed by numeric integration. Figure \ref{fig:hestdiff} shows that option pricing performance is very good.

\begin{figure}
\begin{center}
 \subfloat[Option Pricing Error\label{fig:optdiff}]{\scalebox{0.5}{\input{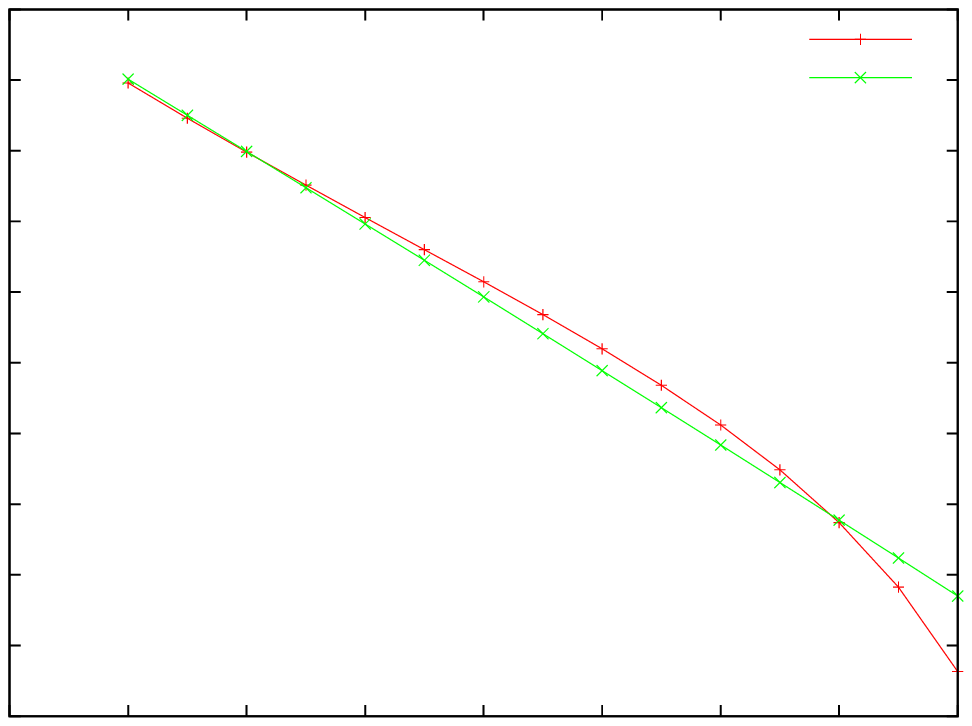}}}
 \subfloat[Density\label{fig:optdens}]{\scalebox{0.5}{\input{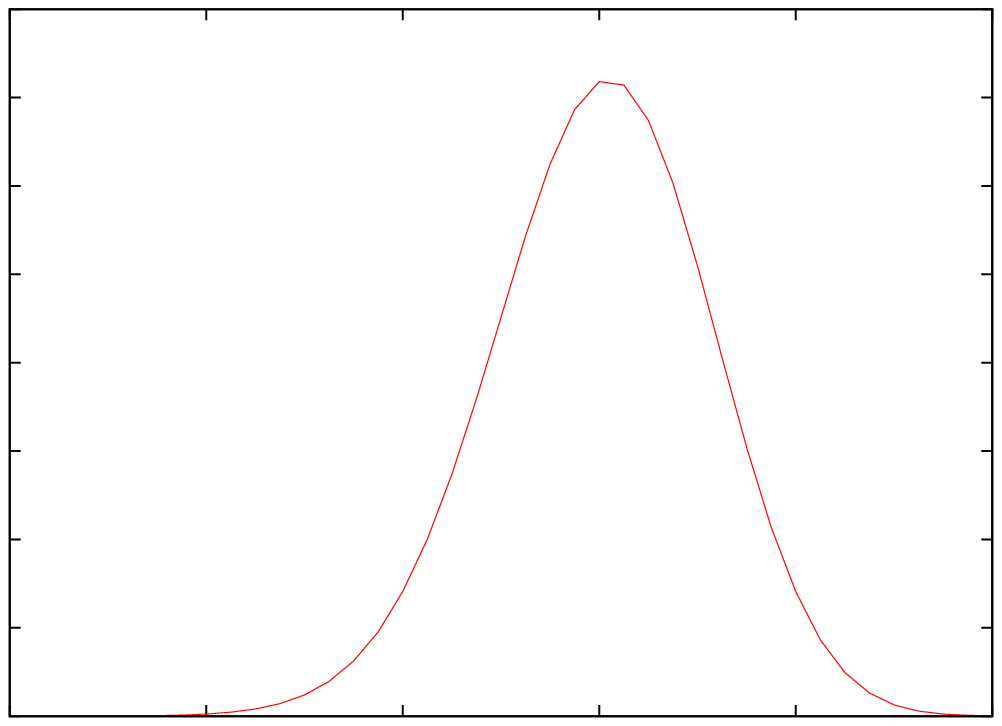}}}
\caption{\label{fig:hestdiff}\textbf{Closed-form option pricing in Heston's model } Panel \ref{fig:optdiff} shows implied Black-Scholes volatility of the true option price, here computed using the \citet{carrmadan99} dampened Fourier inversion approach and the $C^{(4)}(\Delta, K)$ option pricing formula from the approximation of eq. \eqref{eq:approxopt} as a function of strike $K$. The second panel \ref{fig:optdens} shows the density function of $X_\Delta \mid X_0, V_0$ to indicate the likelihood-moneyness trade off. The parameters for the model \eqref{eq:heston} behind the pictures are  $\Delta=1/52, \kappa \theta_V=0.04, \kappa_V=1, \sigma=0.2, \kappa \theta _X=0.03, \rho=-0.8, X_0=5.1, V_0=0.04$.}
\end{center}
\end{figure}
\begin{remark}
 Collecting coefficients  to compute $\vartheta_i$ from Section \ref{sec:optionpricing} eq. \eqref{eq:hestonb}  by hand is very error-prone. Instead we recommend using a symbolic mathematics software package such as Mathematica or Maple.
\end{remark}

\section{Discussion}\label{sec:discussion}

This paper develops a general framework for density approximations for affine processes using orthonormal polynomial expansions in well-chosen weighted $\mathcal L^2$ spaces. We provide novel existence and smoothness results for their transition densities in particular.

The approximations are designed to exploit the explicit polynomial moments of affine processes to compute the coefficients of the expansion without approximation error and in closed form;  the computational burden is concentrated only in the initial calculation of the coefficients of the expansions. Once they are implemented, evaluation is rapid, avoiding the heavy computational cost of Fourier methods to obtain transition densities.  Empirical applications in credit risk, likelihood-based parameter inference, and option pricing suggest that the density expansions are very accurate.

The paper leaves a number of open points for future research. The first question concerns approximations in higher-order weighted Sobolev spaces. One might  suspect that approximation of (sufficiently smooth)
densities in weighted higher-order Sobolev spaces are superior to
$\LS ^{2}$ expansions. In particular, it could be expected that
(i) the quality of approximation might be better (ii) Sobolev
embedding theorems could be applied to infer global uniform
convergence. However, quite
contrary to the $\LS^2$ case, it is unknown
whether the space of polynomials is dense in weighted Sobolev
spaces. Higher-order Sobolev spaces also impose heavy restrictions on the functional form of the approximation weights, which in turn lead to very slow convergence rates. Indeed, preliminary numerical experiments suggest that the price for global, uniform convergence which potentially comes with higher-order Sobolev spaces is a very slow convergence rate.

Another route worth pursuing is a compact truncation of the state space, such that approximations could be performed in non-weighted Sobolev spaces, for which there is more theory available in the literature.

 Suitable approximation weights (such as the bilateral gamma weight of this paper) are a research topic of its own,
and they lead to non-trivial problems in the theory of special functions. Also, density expansions for processes on state spaces different from the canonical ones would be highly desirable. As an example we mention the class of matrix-valued processes
used in co-volatility modeling \citep{leippoldtrojani08,fonsecagrassellitebaldi08,buraschicieslaktrojani08}.

\begin{appendix}

\section{Proofs for Section~\ref{secsuffass}}\label{approofslemma}

This appendix gathers the proofs of the lemmas in Section~\ref{secsuffass}.
\begin{proof}[Proof of Lemma~\ref{lempolydense}]
That the set of polynomials is dense in $\LS^2_w$ is shown in \citet[Lemma 1]{ber_95}. The assumption made in \citet{ber_95} that $w$ is strictly positive can easily be omitted by replacing point-wise equality ``$=0$'' by ``$=0$ $w(\xi)\,d\xi$-a.s.'' at the end of the proof of \citet[Lemma 1]{ber_95}. An orthonormal basis of polynomials $\{H_\alpha\mid\alpha\in\mathbb N_0^d\}$ of $\LS^2_w$ with $\deg H_\alpha = |\alpha|$ is obtained by applying the Gram--Schmidt process to the linearly independent set of monomials $\{\xi^\alpha\mid\alpha\in \mathbb N_0^d\}$, see Algorithm~\ref{algo:gramschmid}.
\end{proof}

\begin{proof}[Proof of Lemma~\ref{lemprod}]
That the product density $w$ on $\mathbb R^d$ has finite exponential moment \eqref{eps0mom} follows from the elementary inequality $\|\xi\|\le \sum_{i=1}^d \left| \xi_i\right|$ for all $\xi\in\mathbb R^d$. The orthonormality of $H_\alpha$ follows from the easily verifiable relationship
\[ \bracket{H_\alpha, H_\beta}_{\LS _w^{2}} = \prod_{i=1}^d \bracket{H^i_{\alpha_i}, H^i_{\beta_i}}_{\LS _{w_i}^{2}(\mathbb R)},\quad \alpha,\,\beta\in \mathbb N_0^d.\]
Moreover, every monomial $\xi^\alpha=\xi_1^{\alpha_1}\cdots\xi_d^{\alpha_d}$ can be written as a product of linear combinations of the respective orthonormal polynomials
\[ \xi_i^{\alpha_i} = \sum_{j=0}^{\alpha_i} \gamma_{ij} H^i_{j}(\xi_i). \]
It follows that the set $\{H_\alpha\mid\alpha\in\mathbb N_0^d\}$ is dense in $\LS^2_w$, and hence forms an orthonormal basis of $\LS^2_w$. This proves the lemma.
\end{proof}

\begin{proof}[Proof of Lemma~\ref{lemass2simple}]
The lemma follows from the estimate
\begin{align*}
 \int_{\mathbb R^d}\frac{g(\xi)^2}{w(\xi)}\,d\xi &= \int_{\mathbb R^d}g(\xi)\,\frac{\e^{-\epsilon_0 \|\xi\|}}{w(\xi)}\,\e^{\epsilon_0 \|\xi\|}\,g(\xi)\,d\xi \\
 &\le  \sup_{x\in\mathbb R^d} \left(g(x)\,\frac{\e^{-\epsilon_0 \|x\|}}{w(x)}\right)\int_{\mathbb R^d}\e^{\epsilon_0 \|\xi\|}\,g(\xi)\,d\xi<\infty.
\end{align*}
\end{proof}

\begin{proof}[Proof of Lemma~\ref{lemass2spec}]
A Taylor expansion of $g(x)$ around $0$ gives $g(x)= \frac{\partial_x^p g(\xi)}{p!} x^p$ for some $\xi=\xi(x)\in [0,x]$. Since $\partial_x^p g$ is continuous we conclude that there exists some finite constant $K$ such that $g(x)\le K\,x^p$ for all $x\in [0,1]$. We then obtain
\begin{align*}
 \int_0^\infty \frac{g(\xi)^2}{w(\xi)}\,d\xi &= \int_0^1 \frac{g(\xi)^2}{w(\xi)}\,d\xi+\int_1^\infty g(\xi)\,\frac{\e^{-\epsilon_0  \xi }}{w(\xi)}\,\e^{\epsilon_0  \xi }\,g(\xi)\,d\xi\\
 &\le K^2\int_0^1 \frac{\xi^{2p}}{w(\xi)}\,d\xi + \sup_{x\ge 1} \left(g(x)\,\frac{\e^{-\epsilon_0 x }}{w(x)}\right)\int_1^\infty \e^{\epsilon_0  \xi }\,g(\xi)\,d\xi<\infty.
\end{align*}
This proves the lemma.
\end{proof}

\begin{proof}[Proof of Lemma~\ref{lemass2}]
Let $x\in\mathcal I$, and let $i^\ast$ be such that $x_{i^\ast}=\min_i x_i$. A Taylor expansion of $g(x,y)$ in $x_{i^\ast}$ around $x_{i^\ast}=0$ gives $g(x,y)=\frac{\partial^p_{x_{i^\ast}} g(\xi,y)}{p!} {x_{i^\ast}}^p$ for some $\xi=\xi(x,y)\in \mathcal I$. Since $\partial^p_{x_{i}}  g(x,y)$ is bounded on $\mathcal I\times\mathbb R^n$ we conclude that there exists some finite constant $K$ such that $g(x,y)\le K\,\min_i x_i^p$ for all $(x,y)\in \mathcal I\times \mathbb R^n$. We then decompose
\[ \int_{\mathbb R^m_+} \int_{\mathbb R^n} \frac{g(\xi,\eta)^2}{w(\xi,\eta)}\,d\xi\,d\eta=I_1+I_2\]
with
\begin{align*}
  I_1&=\int_{\mathcal I} \int_{\mathbb R^n} \frac{g(\xi,\eta)^2}{w(\xi,\eta)}\,d\xi\,d\eta \\
  &\le K\int_{\mathcal I} \int_{\mathbb R^n} \frac{ \min_i\xi_i^p \,\e^{- \epsilon_2 \| \eta\|}}{w(\xi,\eta)}\e^{  \epsilon_2 \| \eta\|}\,g(\xi,\eta)\,d\xi\,d\eta\\
  &\le K\sup_{(x,y)\in {\mathcal I}\times\mathbb R^n} \left(\frac{  \min_i x_i^{p}\,\e^{  - \epsilon_2\| y\|}  }{w(x,y)}\right)\int_{\mathcal I} \int_{\mathbb R^n}  \e^{  \epsilon_2 \| \eta\|}\,g(\xi,\eta)\,d\xi\,d\eta<\infty
\end{align*}
and
\begin{align*}
  I_2&=\int_{(1,\infty)^m} \int_{\mathbb R^n} \frac{g(\xi,\eta)^2}{w(\xi,\eta)}\,d\xi\,d\eta \\
  &= \int_{(1,\infty)^m} \int_{\mathbb R^n} g(\xi,\eta)\,\frac{\e^{-\epsilon_1 \|\xi\| - \epsilon_2 \| \eta\|}}{w(\xi,\eta)}\,\e^{\epsilon_1 \|\xi\| + \epsilon_2 \| \eta\|}\,g(\xi,\eta)\,d\xi\,d\eta\\
  &\le \sup_{(x,y)\in {(1,\infty)^m}\times \mathbb R^n} \left(g(x,y)\,\frac{ \e^{-\epsilon_1 \|x\| - \epsilon_2\| y\|} }{w(x,y)}\right)\int_{(1,\infty)^m} \int_{\mathbb R^n}  \e^{\epsilon_1 \|\xi\| + \epsilon_2 \| \eta\|}\,g(\xi,\eta)\,d\xi\,d\eta\\
  &<\infty.
\end{align*}
This proves the lemma.
\end{proof}

\section{Proof of Theorem \ref{thmtailnew}}\label{sec_proofthmtailnew}

First we note that the existence and smoothness properties of a density on
$\mathbb R^d$ for $X_t|X_0=x$ is invariant with respect
to non-singular linear transformations of the state vector $X_t$. In view of \citet[Theorem 10.7]{filipovicbook2009} there exists a non-singular linear transformation of the state vector $X_t$ mapping $\domain=\mathbb R^m_+\times\mathbb R^n$ onto itself, and which renders block diagonal matrices $\alpha_i$ in the form
\begin{equation}\label{eqblockdiag}
  \alpha_i=\left(\begin{array}{cc}
\diag(0,\dots,0,\alpha_{i,ii},0,\dots,0) & 0 \\ 0 & \alpha_{i,JJ}
\end{array}\right),
\end{equation}
so that $x^\top \alpha_i\, x = \alpha_{i,ii}x_i^2 + x_J^\top \alpha_{i,JJ}\,x_J$ for all $x\in\mathbb R^d$. Moreover, this transformation does not affect the upper diagonal element $\alpha_{i,ii}$ (see the proof of \citet[Lemma 10.5]{filipovicbook2009}), which is important in view of the criterion \eqref{eq:jakobsebastian}. Hence without loss of generality we shall from now on assume that the matrices $\alpha_i$ are of the block diagonal form~\eqref{eqblockdiag}.

We now recall a classical result on characteristic functions $\widehat{\nu}$ of
probability measures $\nu$, see \citet[Proposition 28.1]{Sato}:
\begin{lem}\label{lemdensity}
Let $\nu$ be a probability measure on $\mathbb R^d$. Assume its characteristic function $\widehat{\nu}(\iim u)=\int_{\mathbb R^d} \e^{\iim u\top \xi}\,\nu(d\xi)$ satisfies
  \[ \int_{\mathbb R^d} |\widehat{\nu}(\iim u)|\,\|u\|^k\,du<\infty \]
  for some nonnegative integer $k$. Then $\nu$ has a density $h(x)$ of class $C^k$ and the partial derivatives of $h(x)$ of orders $0,\dots,k$ tend to $0$ as $\|x\|\to\infty$.
\end{lem}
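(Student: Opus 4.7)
The plan is to apply the Fourier inversion theorem and then differentiate under the integral sign, using the Riemann--Lebesgue lemma to obtain decay at infinity. Since $|\widehat\nu(\iim u)|\le 1$ and $(1+\|u\|^k)\,|\widehat\nu(\iim u)|$ is integrable by hypothesis (for $k\ge 0$), in particular $\widehat\nu\in L^1(\mathbb R^d)$. By the standard Fourier inversion theorem for integrable characteristic functions, $\nu$ is absolutely continuous with respect to Lebesgue measure, with density
\[ h(x) = (2\pi)^{-d}\int_{\mathbb R^d} \e^{-\iim u^\top x}\,\widehat\nu(\iim u)\,du.\]

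First I would establish the $C^k$ regularity. For every multi-index $\alpha$ with $|\alpha|\le k$ the integrand is dominated by the integrable function $\|u\|^{|\alpha|}\,|\widehat\nu(\iim u)|\le (1+\|u\|^k)\,|\widehat\nu(\iim u)|$, so dominated convergence justifies passing the partial derivative $\partial^\alpha_x$ inside the integral, yielding
\[ \partial^\alpha h(x) = (2\pi)^{-d}\int_{\mathbb R^d} (-\iim u)^\alpha \e^{-\iim u^\top x}\,\widehat\nu(\iim u)\,du.\]
Continuity of $\partial^\alpha h$ is then immediate from a second application of dominated convergence in $x$. Hence $h\in C^k$.

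Next I would establish the decay of all partial derivatives of order $\le k$ at infinity. The representation above expresses $\partial^\alpha h(x)$, up to the constant $(2\pi)^{-d}$, as the Fourier transform (evaluated at $x$) of the function $f_\alpha(u):= (-\iim u)^\alpha\,\widehat\nu(\iim u)$. By hypothesis $f_\alpha\in L^1(\mathbb R^d)$ for every $|\alpha|\le k$, so the Riemann--Lebesgue lemma yields $\partial^\alpha h(x)\to 0$ as $\|x\|\to\infty$, which concludes the proof.

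The only potential subtlety is the routine verification that dominated convergence applies uniformly in a neighborhood of each $x$ (so that the derivative and the integral can be exchanged and continuity is obtained); this is trivial here because the dominating function $\|u\|^{|\alpha|}\,|\widehat\nu(\iim u)|$ is independent of $x$. No real obstacle arises, and the result is essentially a restatement of \citet[Proposition 28.1]{Sato}.
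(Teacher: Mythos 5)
Your proof is correct and is exactly the standard argument: Fourier inversion for an integrable characteristic function, differentiation under the integral dominated by $(1+\|u\|^k)\,|\widehat\nu(\iim u)|$, and the Riemann--Lebesgue lemma for the decay. The paper itself does not reprove this lemma but simply cites it as the classical result \citet[Proposition 28.1]{Sato}, whose textbook proof is the one you have written out, so there is nothing to compare beyond noting the agreement.
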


It thus remains to prove the appropriate integrability of the affine characteristic function \eqref{eqatfXX}, that is, the appropriate tail behavior in $u\in\mathbb
R^d$ of the functions $\phi(t,\iim u)$ and $\psi(t,\iim u)$. The following lemma is our core result, which together with Lemma~\ref{lemdensity} completes the proof of Theorem \ref{thmtailnew}.

\begin{lem}\label{lemXXX}
The following properties are equivalent:
\begin{enumerate}
\item \label{lemXXX0} The $d\times (n+1)d$-matrix $\mathcal K$ given in \eqref{Kcaldef} has full rank.
  \item \label{lemXXX1} For any $t>0$, the $d\times d$-matrix
\[ A(t)=\int_0^t \diag(0, e^{\mathcal B_{JJ}^\top s}\, a\,
 e^{\mathcal B_{JJ} s})ds+t\sum_{i\in L}\alpha_i\]
is nonsingular.
  \item \label{lemXXX2} For any $t>0$ there exists an $\epsilon>0$ such that the cones
\begin{align*}
   C_0 &=   \left\{ u\in\mathbb R^d\mid u_J^\top \left(\int_0^t e^{\mathcal B_{JJ}^\top s}\,a\,e^{\mathcal B_{JJ} s}\,ds\right)u_J\ge \epsilon\|u\|^2\right\} \\
C_i&=\left\{ u\in\mathbb R^d\mid u^\top\alpha_i\,u\ge \epsilon\|u\|^2\right\},\quad i=1,\dots,m  \end{align*}
cover $\mathbb R^d$. That is, $\bigcup_{i=0}^m C_i=\mathbb R^d$.
\end{enumerate}
Moreover, any of the above properties, {\ref{lemXXX0}}, {\ref{lemXXX1}}, or {\ref{lemXXX2}}, implies that
 \begin{equation}\label{eqdecaychar}
 \int_{\mathbb R^d} \left|e^{\phi(t,\iim u)+\psi(t,\iim u)^\top x}\right| \|u\|^p\,du<\infty
 \end{equation}
for all nonnegative numbers $p< \min_{i\in\{1,\dots,m\} } \frac{b_i}{\alpha_{i,ii}}-1$.
\end{lem}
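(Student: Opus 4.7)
The plan is to dispatch the three equivalences via linear algebra, and then establish the decay estimate by splitting the integral over the cones from \ref{lemXXX2}.

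For \ref{lemXXX0} $\Leftrightarrow$ \ref{lemXXX1}, since $a$ and the $\alpha_i$ are positive semidefinite, so is $A(t)$, with
\begin{equation*}
u^\top A(t)\,u = \int_0^t \| a^{1/2} e^{\mathcal B_{JJ} s}\, u_J\|^2\,ds + t\,u^\top\Big(\sum_{i=1}^m\alpha_i\Big)u.
\end{equation*}
Thus $A(t)$ is nonsingular iff this quadratic form has trivial kernel. The first summand vanishes iff $a\,e^{\mathcal B_{JJ} s}u_J = 0$ for every $s\in[0,t]$, which by Taylor expansion and Cayley--Hamilton is equivalent to $a\mathcal B_{JJ}^k u_J=0$ for $k=0,\dots,n-1$. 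The second vanishes iff $(\sum_i\alpha_i)u=0$. Together, these conditions describe exactly $\ker \mathcal K^\top$, so $\ker A(t)=\ker\mathcal K^\top$ and the equivalence follows. For \ref{lemXXX1} $\Leftrightarrow$ \ref{lemXXX2}, nonsingularity of the psd matrix $A(t)$ is equivalent to $u^\top A(t) u \ge c(t)\|u\|^2$ for some $c(t)>0$; a pigeonhole argument applied to the $m+1$ nonnegative summands yields the cone covering for $\epsilon = c(t)/((m+1)\max(1,t))$, and the converse is immediate from summing.

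The substance is the decay estimate. Using the covering $\mathbb R^d=\bigcup_{i=0}^m C_i$, on $C_0$ I would exploit that $\psi_J(t,\iim u)=\iim e^{\mathcal B_{JJ} t} u_J$ is purely imaginary, so the term $\int_0^t \re(\psi_J^\top a\psi_J)\,ds = -\int_0^t u_J(s)^\top a\, u_J(s)\,ds \le -\epsilon\|u\|^2$ produces Gaussian-type decay of $|e^\phi|$ once the remaining terms in the Riccati equation for $\phi$ are controlled uniformly in $u$. On each cone $C_i$ with $i\ge 1$, I would analyze the scalar Riccati equation for $\psi_i$, which, after the block-diagonal reduction of $\alpha_i$ recalled at the start of the proof, has leading quadratic coefficient $\alpha_{i,ii}$. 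Modelled on the scalar CIR calculation
\begin{equation*}
\psi(t,\iim u)=\frac{\iim u}{1-\iim \alpha t u},\qquad \phi(t,\iim u)= -\frac{b}{\alpha}\log(1-\iim \alpha t u),
\end{equation*}
I would derive a uniform-in-$u$ bound $|\psi_i(t,\iim u)|\le C(t)$ and the sharper logarithmic estimate $\re\,\phi(t,\iim u) \le -\frac{b_i}{\alpha_{i,ii}}\log(1+|u_i|) + K$ on the portion of $C_i$ where $|u_i| \ge c\|u\|$; on the complementary portion, where $u_J^\top\alpha_{i,JJ}u_J \ge c\|u\|^2$, the $\psi_J^\top\alpha_{i,JJ}\psi_J$ term forces $\re\psi_i$ to be very negative, again yielding Gaussian decay of $|e^\phi|$. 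Combining these bounds with uniform control of $(\re\psi)^\top x$ gives $|e^{\phi+\psi^\top x}|\,\|u\|^p \le K'\,\|u\|^{p-b_i/\alpha_{i,ii}}$ on $C_i$, integrable exactly when $p<b_i/\alpha_{i,ii}-1$; taking the minimum over $i$ proves the claim.

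The main obstacle is clearly the ODE analysis on each $C_i$: extracting the precise logarithmic rate of $\re\phi$ requires a delicate a priori bound on $\psi_i(\cdot,\iim u)$ uniform in $u$, together with careful handling of the jump integrals (whose real parts are non-positive since $\re\psi\le 0$ on $\mathbb C_-^m\times \iim\mathbb R^n$) and of the coupling with $\psi_J$ through $\mathcal B_i\psi$. This is precisely the novel ODE analysis alluded to in the introduction; the equivalences themselves are routine linear algebra.
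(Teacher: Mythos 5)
Your treatment of the three equivalences coincides with the paper's and is complete: positive semidefiniteness reduces nonsingularity of $A(t)$ to the vanishing of the quadratic form, the power series of $e^{\mathcal B_{JJ}s}$ together with Cayley--Hamilton converts that condition into $u^\top\mathcal K=0$, and the cone covering follows from the summand decomposition with a pigeonhole constant. No issues there.

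The gap is in the decay estimate, and it sits exactly where you say the difficulty is. Your strategy --- cover $\mathbb R^d$ by the cones, get Gaussian decay on $C_0$ from $\Re\phi(t,\iim u)\le -u_J^\top\bigl(\int_0^t e^{\mathcal B_{JJ}^\top s}\,a\,e^{\mathcal B_{JJ}s}\,ds\bigr)u_J$, and extract a logarithmic bound on $\Re\phi$ of slope $b_i/\alpha_{i,ii}$ on each $C_i$ from the Riccati equation for $\psi_i$ --- is the paper's strategy. But the single step carrying all the content is asserted rather than derived: one must show that $\Re\psi_i(\cdot,\iim u)$ reaches a level $\le -\rho\|u\|$ within a time of order $1/\|u\|$, \emph{uniformly} over $u\in C_i$ with $\|u\|$ large; only with that seed does comparison against the explicit solution of $\partial_t h=\alpha_{i,ii}h^2+\mathcal B_{ii}h/\|u\|$ give the $-1/(\alpha_{i,ii}t)$ decay whose time integral is the logarithm in \eqref{eqpreeq}. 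The scalar CIR formula you invoke does not supply this, because the equation for $\psi_i$ is coupled to the whole vector $\psi$ through the forcing $-g^\top\alpha_i\,g$ (where $g=\Im\psi$ can rotate and shrink away from its initial direction), through $\mathcal B_i\,\psi$, and through the jump integral. The paper's missing idea is the rescaling $F(t,u)=\|u\|^{-1}f(t/\|u\|,\iim u)$, $G(t,u)=\|u\|^{-1}g(t/\|u\|,\iim u)$ of \eqref{FGdefeq}, under which the system becomes asymptotically autonomous with uniformly negative initial slope $\partial_tF_i|_{t=0}\le-u^\top\alpha_iu/\|u\|^2\le-\epsilon$ on $C_i$; a Gronwall bound on $\|G\|^2$, the lower bound $G^\top\alpha_iG\ge e^{4\alpha_{i,ii}\int_0^tF_i\,ds}\,u^\top\alpha_iu/\|u\|^2$ minus a controlled error, and the comparison result of Lemma~\ref{lemcomp} then deliver $F_i(t_0)\le-\rho$ (Lemma~\ref{lemminrho}). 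Two smaller remarks: on the portion of $C_i$ where $u_J^\top\alpha_{i,JJ}u_J$ dominates, the decay is exponential of order $e^{-c\|u\|}$ rather than Gaussian once the term $\alpha_{i,ii}(\Re\psi_i)^2$ saturates $\Re\psi_i$ at $-O(\|u\|)$ (still ample for integrability); and your further case split of $C_i$ is unnecessary, since the uniform forcing $-u^\top\alpha_iu/\|u\|^2\le-\epsilon$ handles both sub-cases in one argument.
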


The remainder of this section is devoted to the proof of Lemma~\ref{lemXXX}. Let $t>0$ and $u\in\mathbb R^d\setminus\{0\}$. We first claim that $u^\top \mathcal K=0$ if and only if $u^\top A(t)=0$, which proves equivalence of \ref{lemXXX0} and \ref{lemXXX1}. To prove the claim, note that since $A(t)$ is positive semidefinite, $u^\top A(t)=0$ is equivalent to $u^\top A(t)u=0$. Since each of the summands in $A(t)$ is positive semidefinite, this again is equivalent to
\[ u^\top \sum_{i=1}^m  \alpha_i=0 \quad \text{and}\quad u_J^\top e^{\mathcal B_{JJ}^\top s}\,a=0\quad \text{for all $s\in[0,t]$.}\]
A power series expansion of $e^{\mathcal B_{JJ}^\top s}$ shows that this is equivalent to
\[ u^\top \sum_{i=1}^m  \alpha_i=0 \quad \text{and}\quad u_J^\top  (\mathcal B_{JJ}^{k})^\top a=0\quad \text{for all $k\in\mathbb N_0$.}\]
The Cayley--Hamilton theorem (see \cite[Theorem 2.4.2]{hor_joh_85}) implies that, for all $k\ge n$, $\mathcal B_{JJ}^{k}$ is a linear combination of ${\rm Id},\,\mathcal B_{JJ}^{},\dots,\,\mathcal B_{JJ}^{n-1}$. Whence the above property is equivalent to $u^\top \mathcal K=0$, which proves the claim.

The equivalence of \ref{lemXXX1} and \ref{lemXXX2} follows from the identity
\[ u^\top A(t)\,u=u_J^\top \left(\int_0^t e^{\mathcal B_{JJ}^\top s}\,a\,e^{\mathcal B_{JJ} s}\,ds\right)u_J+ t \sum_{i\in L}  u^\top\alpha_i\,u\]
and the fact that each of the summands is nonnegative. This establishes the first part of Lemma~\ref{lemXXX}.

As for the second part of Lemma~\ref{lemXXX}, we note that as a consequence of \eqref{eqblockdiag} the real and imaginary parts
\[f(t,\iim u)=\Re \psi(t,\iim u)\quad\text{and}\quad g(t,\iim u)=\Im \psi(t,\iim u)\]
of $\psi$ satisfy the following system of Riccati equations, for $i=1,\dots,m$:
\begin{equation*}
  \begin{aligned}
    \partial_t f_i&=\alpha_{i,ii}  f_i^2 -  g^\top\alpha_i\, g + \mathcal B_i\,f  +  \int_{\domain}\left(e^{f^\top\xi }\cos\left(g^\top\xi \right)-1\right)\mu_i(d\xi)\\
    f_i(0)&=0\\
    f_J&\equiv 0\\
    \partial_t g_i&= 2  f_i \,\alpha_{i,ii}\, g_i +\mathcal B_{i} \,g  + \int_{\domain} e^{f^\top\xi}\sin\left(g^\top\xi\right)\, \mu_i(d\xi)\\
    g_i(0)&=u_i\\
    g_J&= e^{\mathcal B_{JJ} t} u_J
    \end{aligned}
\end{equation*}
In the sequel we will make use, without further notice, of the fact that $f_i$ is $\mathbb
R_-$-valued for all $i=1,\dots,m$, and that $f_j=0$ for all $j\in J$. In particular, it follows from
above that $f_i$ satisfies the following system of differential
inequalities
\[  \partial_t f_i\le \alpha_{i,ii}  \,f_i^2 - g^\top  \alpha_{i}\,g  + \mathcal B_{ii}\,f_i ,\quad i=1,\dots,m.\]

For any $u\neq 0$, we now define the scaled functions
\begin{equation}\label{FGdefeq}
\begin{aligned}
  F(t,u)&=\frac{1}{\|u\|} f\left(\frac{t}{\|u\|},\iim u\right)\\
  G(t,u)&=\frac{1}{\|u\|} g\left(\frac{t}{\|u\|},\iim u\right).
\end{aligned}
\end{equation}
Then $F$ and $G$ satisfy, for $i=1,\dots,m$:
\begin{equation}\label{odefg}
  \begin{aligned}
    \partial_t F_i&\le\alpha_{i,ii} \,F_i^2 - G^\top  \alpha_{i}\,G + \frac{1}{\|u\|}\mathcal B_{ii}\,F_i \\
    F_i(0)&=0\\
    F_J&\equiv 0\\
    \partial_t G_i&= 2 F_i \,\alpha_{i,ii}\,G_i   +\frac{1}{\|u\|}(\mathcal B_i+\mathcal K_i)\,G \\
    G_i(0)&=\frac{u_i}{\|u\|}\\
    G_J&=e^{\mathcal B_{JJ} t} \frac{u_J}{\|u\|}
  \end{aligned}
\end{equation}
where we define the $d\times d$-matrix $\mathcal K=\mathcal K(t,u)$
by its $1\times d$-row vectors
\[ \mathcal K_i=\begin{cases}
\int_{\domain} \left(\int_0^1   \cos\left(s \|u\| G^\top\xi\right) ds\right)e^{\|u\|  F^\top\xi}\,\xi^\top\mu_i(d\xi),& i=1,\dots,m\\
0,& i=m+1,\dots,d,\end{cases}\] and we have used the simple fact
that
\begin{align*}
  e^{\|u\|  F^\top\xi}\sin\left(\|u\|  G^\top\xi\right)
  &=e^{\|u\| F^\top\xi}\int_0^1 \frac{d}{ds}    \sin\left(s \|u\|  G^\top\xi\right) ds\\
  &=\|u\|  G^\top\xi\,e^{\|u\|  F^\top\xi}\int_0^1      \cos\left(s \|u\| G^\top\xi\right) ds.
\end{align*}
It follows by the assumptions on $\mu_i$ that the matrix $\mathcal
K$ is uniformly bounded
\[ \sup_{t,u}\|\mathcal K(t,u)\|=K<\infty\]
where the constant $K$ only depends on the measures $\mu_i$. The
squared norm of $G$ thus satisfies
\begin{align*}
  \partial_t \|G\|^2 &=2 G^\top\partial_t G =4 G^\top{\rm diag}(F)\,G +\frac{2 }{\|u\|}\, G^\top(\mathcal B+\mathcal K)\,G   \\
  &\le \frac{2 }{\|u\|}\left(\|\mathcal B\|+  K\right)\|G\|^2 \\
   \|G(0)\|^2&=1.
\end{align*}
We shall now and in the sequel make use of the following comparison
result, which is a special case of a more general theorem proved by
\citet{volkmann}:
\begin{lem}\label{lemcomp}
  Let $R(t,v)$ be a continuous real map on $\mathbb R_+\times\mathbb R$ and locally Lipschitz continuous in $v$. Let $p(t)$ and $q(t)$ be differentiable functions satisfying
  \begin{align*}
    \frac{d}{dt} p(t) &\le R(t,p(t))\\
    \frac{d}{dt} q(t) &= R(t,q(t))\\
    p(0)&\le q(0).
  \end{align*}
  Then we have $p(t)\le q(t)$ for all $t\ge 0$.
\end{lem}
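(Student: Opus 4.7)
The plan is to deduce Lemma~\ref{lemcomp} by a standard contradiction argument combined with a Gr\"onwall-type estimate. Introduce the difference $\varepsilon(t)=q(t)-p(t)$, which is differentiable and satisfies $\varepsilon(0)\ge 0$. The claim is that $\varepsilon(t)\ge 0$ for all $t\ge 0$.

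Suppose, for contradiction, that $\varepsilon(t_1)<0$ for some $t_1>0$. By continuity of $\varepsilon$, the set $\{s\in[0,t_1]\mid \varepsilon(s)\ge 0\}$ contains $0$ and is closed, so it admits a largest element $t_0<t_1$; at this point $\varepsilon(t_0)=0$, while $\varepsilon(s)<0$ on some nonempty interval $(t_0,t_0+\delta]\subseteq [0,t_1]$. On the compact interval $[t_0,t_0+\delta]$ the continuous functions $p$ and $q$ take values in a compact set, so the local Lipschitz hypothesis on $R$ supplies a single constant $L\ge 0$ with $|R(s,v_1)-R(s,v_2)|\le L\,|v_1-v_2|$ for all relevant $(s,v_1,v_2)$. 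Since $p(s)\ge q(s)$ throughout this interval, subtracting the two differential relations gives
\[ \dot p(s)-\dot q(s)\le R(s,p(s))-R(s,q(s))\le L\bigl(p(s)-q(s)\bigr)=-L\,\varepsilon(s),\]
which rearranges to the linear differential inequality $\dot\varepsilon(s)\ge L\,\varepsilon(s)$ on $[t_0,t_0+\delta]$.

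The integrating factor $e^{-Ls}$ then shows that $s\mapsto e^{-Ls}\varepsilon(s)$ is nondecreasing on $[t_0,t_0+\delta]$, whence $e^{-Ls}\varepsilon(s)\ge e^{-Lt_0}\varepsilon(t_0)=0$; equivalently, $\varepsilon(s)\ge 0$ on the whole interval. This contradicts the strict negativity of $\varepsilon$ just past $t_0$, and the assumption $\varepsilon(t_1)<0$ must therefore fail for every $t_1>0$, proving the lemma. The only delicate step, modest as it is, is the passage from the merely local Lipschitz hypothesis on $R$ to a uniform Lipschitz constant on the comparison interval, which is handled by using continuity of the trajectories $p,q$ to confine the argument to a compact subset of the domain of $R$.
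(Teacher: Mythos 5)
Your argument is correct. The paper itself does not prove Lemma~\ref{lemcomp} at all: it simply invokes it as a special case of a more general comparison theorem of Volkmann (which treats quasimonotone systems of differential inequalities in ordered vector spaces), so any self-contained proof is by definition a different route. What you give is the standard scalar comparison argument: locate the last time $t_0$ at which $\varepsilon=q-p$ is nonnegative, note $\varepsilon(t_0)=0$, use the sign of $\varepsilon$ just past $t_0$ to remove the absolute value in the Lipschitz bound and obtain $\dot\varepsilon\ge L\varepsilon$, and then conclude via the integrating factor $e^{-Ls}$ that $\varepsilon$ cannot become negative --- a contradiction. The directions of all inequalities check out ($\dot p-\dot q\le R(s,p)-R(s,q)\le L(p-q)=-L\varepsilon$ on the interval where $p\ge q$), and your handling of the passage from the local Lipschitz hypothesis to a uniform constant on the compact comparison interval is exactly the right point to flag; it uses the (standard) reading that the Lipschitz bound is uniform over compact sets of $(t,v)$, which is also what is implicitly needed for the lemma to be applied to solutions of the Riccati-type equations in the paper. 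Your proof buys self-containedness at the scalar level; the paper's citation buys the generality of Volkmann's result, which is not needed here since the lemma is stated and used only for real-valued $p$ and $q$.
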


Applying Lemma~\ref{lemcomp} to the above differential inequality for $\|G\|^2$ we obtain
\begin{equation}\label{Gnorm}
   \|G\|^2\le  e^{\frac{2 }{\|u\|}\left(\|\mathcal B\|+  K\right)t}.
\end{equation}
For any $i\in\{1,\dots,m\}$ we then obtain the differential
inequality
\begin{align*}
  \partial_t\left( G^\top\alpha_i\,G\right)&=2 G^\top\alpha_i\,\partial_t G\\
  &= 4 G^\top\alpha_i\,{\rm diag}(F)\,\alpha_i\,G +\frac{2 }{\|u\|}\, G^\top\alpha_i(\mathcal B+\mathcal K)\,G\\
  &\ge 4 \alpha_{i,ii} \, F_i\,\alpha_{i,ii}\,G_i^2 -  \frac{2 \|\alpha_i\|}{\|u\|}\left(\| \mathcal B\|+  K\right) \|G\|^2 \\
  &\ge 4 \alpha_{i,ii} \, F_i\,G^\top\alpha_i\,G -  \frac{2 \|\alpha_i\|}{\|u\|}\left(\| \mathcal B\|+  K\right) e^{\frac{2 }{\|u\|}\left(\|\mathcal B\|+  K\right)t}\\
  G(0)^\top\alpha_i\,G(0)&=\frac{u^\top\alpha_i u}{\|u\|^2}
\end{align*}
where we have used the fact that $\alpha_{i,ii}\,G_i^2\le
G^\top\alpha_i\,G$ and \eqref{Gnorm} for the last inequality. Lemma~\ref{lemcomp} again yields the lower bound
\begin{equation*}
\begin{aligned}
  G^\top\alpha_i\,G &\ge e^{4\alpha_{i,ii}\int_0^t F_i(s)\,ds} \frac{u^\top\alpha_i u}{\|u\|^2} \\
  &\quad -\frac{2 \|\alpha_i\|}{\|u\|}\left(\| \mathcal B\|+  K\right)\int_0^t \underbrace{e^{4\alpha_{i,ii}\int_s^t F_i(r)\,dr}}_{\le 1}e^{\frac{2 }{\|u\|}\left(\|\mathcal B\|+  K\right)(t-s)}\,ds\\
  &\ge e^{4\alpha_{i,ii}\int_0^t F_i(s)\,ds} \frac{u^\top\alpha_i u}{\|u\|^2}-\|\alpha_i\| \left(e^{\frac{2 }{\|u\|}\left(\|\mathcal B\|+  K\right)t}-1\right)
\end{aligned}
  \end{equation*}
Combining this with the differential inequality \eqref{odefg} for
$F$ we obtain
\begin{equation}\label{odefnew}
  \begin{aligned}
    \partial_t F_i&\le\alpha_{i,ii} \,F_i^2 + \frac{1}{\|u\|}\mathcal B_{ii}\,F_i- e^{4\alpha_{i,ii}\int_0^t F_i(s)\,ds} \frac{u^\top\alpha_i u}{\|u\|^2} \\
  &\quad +\|\alpha_i\| \left(e^{\frac{2 }{\|u\|}\left(\|\mathcal B\|+  K\right)t}-1\right)  \\
    F(0)&=0.
  \end{aligned}
\end{equation}
We arrive at the following intermediate result.
\begin{lem}\label{lemminrho}
For every $\epsilon>0$ and $t_0>0$ there exists some $\rho>0$ and
$R>0$ such that
\begin{equation*}
  F_i(t_0,\iim u)\le -\rho
\end{equation*}
for all $u\in\mathbb R^d$ with $\|u\|\ge R$ and
$u^\top\alpha_i\,u\ge \epsilon\|u\|^2$, for all $i\in\{1,\dots,m\}$.
\end{lem}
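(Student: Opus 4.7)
The plan is to compare $F_i$ to the solution of the limiting nonlocal ODE obtained by dropping the $O(1/\|u\|)$ perturbations in \eqref{odefnew} and exploiting the hypothesis $u^\top\alpha_iu/\|u\|^2\ge\epsilon$. Writing $a:=\alpha_{i,ii}$, the limiting problem is
\begin{equation*}
G'(t)=aG(t)^2 - \epsilon\exp\!\Bigl(4a\!\int_0^t G(s)\,ds\Bigr),\qquad G(0)=0.
\end{equation*}
The substitution $v(t)=\exp\bigl(-2a\!\int_0^t G(s)\,ds\bigr)$ turns this into the local second-order problem
\begin{equation*}
v\,v''=\tfrac{1}{2}(v')^2+2a\epsilon,\qquad v(0)=1,\;v'(0)=0,
\end{equation*}
which is solved explicitly by $v(t)=1+a\epsilon t^2$, yielding
\begin{equation*}
G(t)=-\frac{\epsilon t}{1+a\epsilon t^2},
\end{equation*}
with the obvious degenerate form $G(t)=-\epsilon t$ when $a=0$. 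Setting $\rho_\star:=-G(t_0)=\epsilon t_0/(1+a\epsilon t_0^2)>0$, the lemma reduces to proving $F_i(t_0,\iim u)\le G(t_0)+o(1)$ uniformly in $u$ satisfying $\|u\|\ge R$ and $u^\top\alpha_iu\ge\epsilon\|u\|^2$, since then $\rho:=\rho_\star/2$ and sufficiently large $R$ do the job.

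I would carry this out in two steps. First, establish a uniform-in-$u$ a priori bound $\sup_{t\in[0,t_0]}|F_i(t,\iim u)|\le M$ for all sufficiently large $\|u\|$, using the Riccati structure of \eqref{odefg} together with the exponential growth bound \eqref{Gnorm} on $G(t,u)$ and the sign constraint $F_i\le 0$ inherited from $f_i\le 0$. With this bound in hand the two perturbation terms $\frac{\mathcal B_{ii}}{\|u\|}F_i$ and $\|\alpha_i\|(e^{2(\|\mathcal B\|+K)t/\|u\|}-1)$ in \eqref{odefnew} are of uniform order $\eta(\|u\|)=O(1/\|u\|)$, so
\begin{equation*}
F_i'(t)\le aF_i(t)^2 - \epsilon\,e^{4a\int_0^t F_i(s)\,ds} + \eta(\|u\|).
\end{equation*}
Second, run a comparison argument against the inflated barrier $G_\delta:=G+\delta$ with $\delta>\eta(\|u\|)\,t_0$. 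At any putative first crossing time $s\in(0,t_0]$ where $F_i(s)=G_\delta(s)$, one has $F_i\le G_\delta$ on $[0,s]$, hence $\int_0^s F_i\le\int_0^s G+\delta s$; combining this with $F_i+G\le 0$ in the Riccati quadratic term yields a Gronwall-type inequality on $F_i-G_\delta$ whose integrated form is $\le\eta(\|u\|)t_0<\delta$, contradicting the crossing. Hence $F_i(t_0,\iim u)\le G(t_0)+\delta=-\rho_\star+\delta$, and taking $\delta=\rho_\star/2$ and $R$ so large that $\eta(\|u\|)t_0\le\rho_\star/2$ for $\|u\|\ge R$ concludes the proof.

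The main obstacle is the nonlocality of the exponential $e^{4a\int_0^t F_i(s)\,ds}$, which prevents a direct application of the Volkmann comparison result (Lemma~\ref{lemcomp}): at any crossing time the sign of the exponential difference $e^{4a\int F_i}-e^{4a\int G}$ depends on the full history of $F_i$ on $[0,s]$, not just on its value at $s$. Introducing the cushion $\delta>0$ in the barrier is precisely what absorbs this sign ambiguity once $\|u\|$ is large enough. The uniform a priori bound $|F_i|\le M$ is the second delicate ingredient: $F_i$ is only trivially controlled above by $0$, and the lower bound requires exploiting the Riccati structure against the explicit $O(\|u\|^{-1}\mathrm{polynomial})$ growth of $G(t,u)$ provided by \eqref{Gnorm}.
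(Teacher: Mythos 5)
Your overall strategy --- compare $F_i$ with the limiting problem obtained by sending $\|u\|\to\infty$ in \eqref{odefnew} --- is in the same spirit as the paper's, and your explicit solution $G(t)=-\epsilon t/(1+\alpha_{i,ii}\epsilon t^2)$ of the limiting nonlocal equation (via the substitution $v=\exp(-2\alpha_{i,ii}\int_0^t G)$) is correct and goes beyond what the paper writes down. However, the comparison step has a genuine gap, and it sits exactly at the point you yourself flag as the main obstacle. The first-crossing hypothesis $F_i\le G_\delta$ on $[0,s]$ controls $\int_0^\tau F_i-\int_0^\tau G$ only from \emph{above} (by $\delta\tau$), whereas to bound $\partial_t(F_i-G_\delta)$ from above you must control the difference of the nonlocal terms, $-\epsilon\bigl(e^{4a\int_0^\tau F_i}-e^{4a\int_0^\tau G}\bigr)$ with $a=\alpha_{i,ii}$, from above --- and that requires a bound on $\int_0^\tau F_i-\int_0^\tau G$ from \emph{below}. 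Nothing in the setup prevents $F_i$ from dipping far below $G$ (the differential inequality only caps $\partial_t F_i$ from above, so such behaviour is consistent with it); on any stretch where $\int_0^\tau F_i<\int_0^\tau G$ the term above is positive and can be as large as $\epsilon$, so its integral over $[0,t_0]$ can be of order $\epsilon t_0$, which is \emph{not} smaller than your cushion $\delta\le\epsilon t_0/(2(1+a\epsilon t_0^2))$ and does not shrink as $\|u\|\to\infty$. The quadratic term suffers the same problem: $a(F_i^2-G^2)=a(F_i-G)(F_i+G)$ is nonpositive only where $F_i\ge G$. Hence the claimed integrated bound ``$\le\eta(\|u\|)t_0<\delta$'' does not follow; the cushion absorbs only the $O(1/\|u\|)$ perturbations, not the sign ambiguity of the nonlocal term.

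The paper sidesteps this entirely by never attempting a barrier around the limiting solution: it uses only that the initial slope $\partial_t F_i|_{t=0}\le -u^\top\alpha_i u/\|u\|^2\le-\epsilon$ is uniformly negative together with the uniform smallness of the $O(1/\|u\|)$ terms, and then invokes the comparison result of Lemma~\ref{lemcomp} against a local autonomous majorant; once $F_i$ has been pushed below some $-\rho_1$ on a short initial interval (where $e^{4a\int_0^t F_i}$ is still close to $1$), the crude inequality $\partial_t F_i\le aF_i^2+O(1/\|u\|)$ keeps it bounded away from zero up to time $t_0$. To repair your argument you should either adopt such a two-stage local comparison, or establish a matching \emph{lower} barrier for $\int_0^t F_i$ before running the Gronwall step; as written, the crossing is not excluded.
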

\begin{proof}
The differential inequality \eqref{odefnew} is autonomous and smooth
in $F_i$. Moreover, the initial slope satisfies
\[\partial_t F_i(t,\iim u)|_{ t=0 }\le -\frac{u^\top\alpha_i u}{\|u\|^2}\le -\epsilon\]
uniformly in $i$ and $u$ in the designated set. Also notice the
estimate
\[ \frac{1}{\|u\|}\mathcal B_{ii}\,F_i\le -\frac{1}{R}|\mathcal B_{ii}|\,F_i \]
and the uniform bound on the last summand on the right hand side of
\eqref{odefnew} for $t\le t_0$ and $\|u\|\ge R$. The claim now
follows from Lemma~\ref{lemcomp}.
\end{proof}

Below we shall make use of the following is easy to check auxiliary result on Riccati equations:
\begin{lem}\label{ric sol}
Let $A>0, B\in\mathbb R\setminus\{0\}$, and $t_0\geq 0$, and
$G_0<0$. For $t\geq t_0$, the solution of
\[\partial_t G(t)=AG(t)^2+B G(t),\quad G(t_0)=G_0
\]
is of the form
\[
G(t)=\frac{BG_0 e^{B(t-t_0)}}{\left(AG_0+B\right)-A
G_0e^{B(t-t_0)}}.
\]
If $B=0$, then
\[
G(t)=\frac{G_0}{1-AG_0(t-t_0)}.
\]
\end{lem}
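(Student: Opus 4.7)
My plan is to reduce this scalar Riccati equation to a linear first-order ODE by the classical reciprocal substitution $H=1/G$. Since $G_0<0$, the solution $G$ stays nonzero in some maximal interval $[t_0,t_0+\delta)$, on which $H$ is well-defined and differentiable with
\[ \partial_t H = -\frac{\partial_t G}{G^2} = -\frac{AG^2+BG}{G^2}=-A-BH,\qquad H(t_0)=1/G_0. \]
This is a linear inhomogeneous ODE whose solution can be written down in closed form. The stated formulas for $G$ will then follow by inverting $H$ and rearranging. Since the Riccati equation has a locally Lipschitz right-hand side, uniqueness guarantees that the explicit formula obtained this way agrees with the solution on its maximal interval of existence, so there is no issue with the scope of validity of the substitution.

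For the case $B\neq 0$, multiplying by the integrating factor $e^{Bt}$ gives $(e^{Bt}H(t))'=-Ae^{Bt}$, hence
\[ H(t)=e^{-B(t-t_0)}H(t_0)-\frac{A}{B}\left(1-e^{-B(t-t_0)}\right)=\frac{(B+AG_0)e^{-B(t-t_0)}-AG_0}{BG_0}. \]
Taking reciprocals and multiplying numerator and denominator by $e^{B(t-t_0)}$ yields
\[ G(t)=\frac{BG_0\,e^{B(t-t_0)}}{(AG_0+B)-AG_0\,e^{B(t-t_0)}}, \]
which is the claimed expression.

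For the case $B=0$, the ODE for $H$ collapses to $\partial_t H=-A$, giving $H(t)=1/G_0 - A(t-t_0)=(1-AG_0(t-t_0))/G_0$, and inversion yields $G(t)=G_0/(1-AG_0(t-t_0))$, as claimed.

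The only point that requires care is the denominator: one should check that the explicit formulas are well-defined at $t=t_0$ (both reduce to $G_0$, as a brief substitution confirms) and that they satisfy the Riccati ODE (a one-line differentiation). There is no real obstacle here; the computation is entirely routine once the linearization step is in place. In particular, nothing in the proof requires the maximal existence interval to be all of $[t_0,\infty)$, which is consistent with the fact that Riccati equations can blow up in finite time when the denominator above vanishes.
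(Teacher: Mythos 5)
Your proof is correct: the reciprocal substitution $H=1/G$ linearizes the equation, and the resulting computations for both $B\neq 0$ and $B=0$ check out against the stated formulas. The paper itself offers no proof (it labels the lemma as \emph{easy to check}), so there is nothing to compare against; the only small refinement worth noting is that under the stated hypotheses $A>0$ and $G_0<0$ the denominator $(AG_0+B)-AG_0e^{B(t-t_0)}=AG_0\bigl(1-e^{B(t-t_0)}\bigr)+B$ never vanishes for $t\ge t_0$, so the formula is in fact valid on all of $[t_0,\infty)$ and your caveat about finite-time blow-up is not needed here.
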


From \eqref{odefg} we deduce the trivial differential inequality
\begin{equation*}
  \begin{aligned}
    \partial_t F_i&\le\alpha_{i,ii} \,F_i^2 + \frac{\mathcal B_{ii}}{\|u\|}\,F_i.
  \end{aligned}
\end{equation*}
By Lemma \ref{ric sol}, the solution of
\[ \partial_t h = \alpha_{i,ii} \,h^2 + \frac{\mathcal B_{ii}}{\|u\|}\,h\]
with $h(t_0)<0$ is explicitly given by
\[ h(t) = -\frac{e^{\frac{\mathcal B_{ii}}{\|u\|}(t-t_0)}}{ \|u\|\frac{\alpha_{i,ii}}{\mathcal B_{ii}}\left(e^{\frac{\mathcal B_{ii}}{\|u\|}(t-t_0)}-1\right)-\frac{1}{h(t_0)}},\quad t\ge t_0.\]
Together with Lemmas~\ref{lemcomp} and \ref{lemminrho} and we thus obtain that
\begin{equation}\label{Fituineq}
 F_i(t,\iim u)\le  -\frac{e^{\frac{\mathcal B_{ii}}{\|u\|}(t-t_0)}}{\|u\|\frac{\alpha_{i,ii}}{\mathcal B_{ii}}\left(e^{\frac{\mathcal B_{ii}}{\|u\|}(t-t_0)}-1\right)+\frac{1}{\rho}},\quad t\ge t_0
\end{equation}
for all $\|u\|\ge R$ with $u^\top\alpha_i\,u\ge \epsilon\|u\|^2$. By
rescaling we infer
\begin{align*}
  f_i(t,\iim u)&=\|u\| F_i(t\|u\|,\iim u)\\
  &\le - \frac{\|u\|e^{\mathcal B_{ii}\left(t- \frac{t_0}{\|u\|}\right)}}{\|u\|\frac{\alpha_{i,ii}}{\mathcal B_{ii}}\left(e^{\mathcal B_{ii}\left(t- \frac{t_0}{\|u\|}\right)}-1\right)+\frac{1}{\rho} }\\
  &=-\frac{1}{\alpha_{i,ii}}\frac{d}{dt} \log\left(\|u\|\frac{\alpha_{i,ii}}{\mathcal B_{ii}}\left(e^{\mathcal B_{ii}\left(t- \frac{t_0}{\|u\|}\right)}-1\right)+\frac{1}{\rho} \right) ,\quad t\ge \frac{t_0}{\|u\|}
\end{align*}
for all $\|u\|\ge R$ with $u^\top\alpha_i\,u\ge \epsilon\|u\|^2$.
Integrating this inequality yields
\begin{equation}\label{eqpreeq}
 \begin{aligned}
  \int_0^t f_i(s,\iim u)\,ds &\le \int_{\frac{t_0}{\|u\|}}^t f_i(s,\iim u)\,ds\\
  &= -\frac{1}{\alpha_{i,ii}}\left[ \log\left(\|u\|\frac{\alpha_{i,ii}}{\mathcal B_{ii}}\left(e^{\mathcal B_{ii}\left(t- \frac{t_0}{\|u\|}\right)}-1\right)+\frac{1}{\rho} \right)-\log\left(\frac{1}{\rho}\right)\right]\\
  &=-\frac{1}{\alpha_{i,ii}}\log\left(\rho\|u\|\frac{\alpha_{i,ii}}{\mathcal B_{ii}}\left(e^{\mathcal B_{ii}\left(t- \frac{t_0}{\|u\|}\right)}-1\right)+1 \right),\quad t\ge \frac{t_0}{\|u\|}
\end{aligned}
\end{equation}
for all $\|u\|\ge R$ with $u^\top\alpha_i\,u\ge \epsilon\|u\|^2$. We arrive at the following key result, which completes the proof of Lemma~\ref{lemXXX}.
\begin{lem}\label{lemkey1}
Let $i\in\{1,\dots,m\}$. For every $\epsilon>0$ and $t>0$ there exists some $R>0$ and $C>0$
such that
\[ \left|e^{\phi(t,\iim u)+\psi(t,\iim u)^\top x}\right|\le C \left(1+\|u\|\right)^{-\frac{b_i}{\alpha_{i,ii}}}\]
for all $u\in\mathbb R^d$ with $\|u\|\ge R$ and
$u^\top\alpha_i\,u\ge\epsilon\|u\|^2$. Moreover,
\begin{equation*}
  \Re\phi(t,\iim u)\le -u_J^\top \left(\int_0^t e^{\mathcal B_{JJ}^\top s}\,a\,e^{\mathcal B_{JJ} s}\,ds\right)u_J
\end{equation*}
for all $u\in\mathbb R^d$.
\end{lem}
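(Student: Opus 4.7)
The overall strategy is to reduce the bound on $|e^{\phi(t,\iim u)+\psi(t,\iim u)^\top x}| = e^{\Re\phi(t,\iim u) + f(t,\iim u)^\top x}$ to a bound on $\Re\phi$ alone, and then read the desired decay off of the estimate \eqref{eqpreeq} that the preceding analysis has already produced for $\int_0^t f_i(s,\iim u)\,ds$. The first reduction is immediate: since $x\in\domain=\mathbb R_+^m\times\mathbb R^n$ together with $f_i\le 0$ for $i=1,\dots,m$ and $f_J\equiv 0$ gives $f(t,\iim u)^\top x\le 0$, we get $|e^{\phi+\psi^\top x}|\le e^{\Re\phi(t,\iim u)}$ uniformly in $x\in\domain$. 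All subsequent work is therefore on $\Re\phi$.

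To compute $\Re\phi$ I would integrate the Riccati equation \eqref{ricceq} for $\phi$. Writing $\psi_J=\iim g_J$ with $g_J(s)=e^{\mathcal B_{JJ}s}u_J$ and splitting $\psi=f+\iim g$ for the $\mathbb R^m_+$ coordinates, three observations are central: (i) $\psi_J^\top a\,\psi_J=-g_J^\top a\,g_J$ is real and non-positive; (ii) $b^\top\psi$ has real part $\sum_{j=1}^m b_j f_j\le 0$, because $b_j\ge 0$ by admissibility and $f_j\le 0$, while $f_J\equiv 0$ kills the $J$-coordinates; (iii) for the jump integral, the $-\psi_J^\top\chi_J(\xi)$ compensator contributes zero to the real part precisely because $f_J\equiv 0$, while $\Re(e^{\psi^\top\xi}-1)=e^{f^\top\xi}\cos(g^\top\xi)-1\le 0$ using $e^{f^\top\xi}\le 1$ (from $f^\top\xi\le 0$ on $\domain$). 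Integrating therefore yields
\[
\Re\phi(t,\iim u) = -u_J^\top\!\left(\int_0^t e^{\mathcal B_{JJ}^\top s} a\, e^{\mathcal B_{JJ} s}\,ds\right)\!u_J + \sum_{j=1}^m b_j \int_0^t f_j(s,\iim u)\,ds + J(t,u),
\]
where each of the three summands is $\le 0$. Dropping the second and third summands immediately gives the ``Moreover'' statement.

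For the first (quantitative) assertion, I would simply discard the Gaussian and jump summands and also all drift terms $b_j\int f_j\,ds$ for $j\ne i$ (each is non-positive), keeping only $b_i\int_0^t f_i(s,\iim u)\,ds$. Plugging in the previously established inequality \eqref{eqpreeq}, valid for $\|u\|\ge R$ and $u^\top\alpha_i u\ge\epsilon\|u\|^2$, yields
\[
\Re\phi(t,\iim u)\le -\frac{b_i}{\alpha_{i,ii}}\log\!\left(1+\rho\|u\|\tfrac{\alpha_{i,ii}}{\mathcal B_{ii}}\bigl(e^{\mathcal B_{ii}(t-t_0/\|u\|)}-1\bigr)\right).
\]
Since the argument of the logarithm grows linearly in $\|u\|$ for large $\|u\|$ (the factor $\frac{\alpha_{i,ii}}{\mathcal B_{ii}}(e^{\mathcal B_{ii}t}-1)$ is strictly positive, regardless of the sign of $\mathcal B_{ii}$), exponentiating produces the bound $C(1+\|u\|)^{-b_i/\alpha_{i,ii}}$ for a constant $C$ depending on $t,\epsilon$ but not on $x$ or $u$. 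The degenerate case $\mathcal B_{ii}=0$ is handled by the second branch of Lemma~\ref{ric sol}, which gives the analogous estimate $\int_0^t f_i\,ds\le -\frac{1}{\alpha_{i,ii}}\log(1+\alpha_{i,ii}\rho\|u\|(t-t_0/\|u\|))$, with the same linear-in-$\|u\|$ growth inside the log.

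The main technical hazard I anticipate is bookkeeping signs in the jump integral: it is essential that the truncation compensator $-\psi_J^\top\chi_J(\xi)$ contributes nothing to $\Re\phi$, and this relies squarely on $f_J\equiv 0$, which in turn follows from the block-diagonal structure \eqref{eqblockdiag} of $\alpha_i$ imposed at the start of the appendix. With that in hand, every non-trivial term has a definite sign, and the decay rate comes entirely from the sharp Riccati estimate \eqref{eqpreeq} that has already been painstakingly set up.
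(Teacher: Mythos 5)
Your proposal is correct and follows essentially the same route as the paper's own (much terser) proof: bound $|e^{\phi+\psi^\top x}|$ by $e^{\Re\phi}$ using $f\le 0$ on $\domain$, integrate \eqref{ricceq} keeping only the non-positive Gaussian term and $b_i\int_0^t f_i\,ds$, and conclude via \eqref{eqpreeq}. The additional sign bookkeeping for the jump integral and the $\mathcal B_{ii}=0$ case are details the paper leaves implicit, and you handle them correctly.
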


\begin{proof}
Integration of \eqref{ricceq} implies
\begin{align*}
  \Re\left(\phi(t,\iim u)+\psi(t,\iim u)^\top x\right)&\le \Re\phi(t,\iim u)\\
&\le -u_J^\top \left(\int_0^t e^{\mathcal B_{JJ}^\top s}\,a\,e^{\mathcal B_{JJ} s}\,ds\right)u_J + b_i \int_0^t f_i(s,\iim u)\,ds.
\end{align*}
Together with \eqref{eqpreeq}, this proves the lemma.
\end{proof}

\section{Proof of Theorem \ref{th: int cbi dens}}\label{proof: int cbi dens}
Fix some $\gamma>0$. We consider the two-dimensional process $Z=(X,Y)$,
where $Y_t=y+\gamma\int_0^t X^x_s ds$, with $y\in\mathbb R_+$. It is easy
to see that $(X,Y)$ is an affine process with state space $\mathbb
R^2_+$. In particular, if $y=0$ we have that $Y_t=\gamma \int_0^t
X^x_s$ has an exponentially affine characteristic function of the
form
\[
\mathbb E\left[e^{\iim v Y_t}\mid X_0=x\right]=e^{\phi(t,\iim v)+\psi(t,\iim v)x}, \quad
v\in \mathbb R,
\]
where the characteristic exponents $\phi$ and $\psi$ satisfy the generalized Riccati
differential equations
\begin{align*}
\partial_t\phi&=b\psi\\
\phi(0)&=0\\
\partial_t\psi&=\alpha\psi^2+\beta\psi+\iim\gamma v+\int_0^\infty \left(e^{\psi\xi}-1\right)\mu(d\xi)\\
\psi(0)&=0.
\end{align*}
For any $v\neq 0$, we define the scaled functions\footnote{Note that here we have to scale by $\sqrt{|v|}$, which is in contrast to the proof of Theorem \ref{thmtailnew}, see \eqref{FGdefeq}.}
\begin{align*}
  F(t,\iim v)&=\frac{1}{\sqrt{|v|}} \Re\psi\left(\frac{t}{\sqrt{|v|}},\iim v\right)\\
  G(t,\iim v)&=\frac{1}{\sqrt{|v|}} \Im\psi\left(\frac{t}{\sqrt{|v|}},\iim v\right).
\end{align*}
Then $F$ and $G$ satisfy
\begin{equation}\label{odefgXX}
  \begin{aligned}
    \partial_t F&=\alpha \left(F^2 - G^2\right) + \frac{\beta}{\sqrt{|v|}}F +\frac{1}{|v|}\int_0^\infty \e^{\sqrt{\vert
v\vert}F\,\xi}\left(\cos\left(\sqrt{\vert
v\vert}F\,\xi\right)-1\right)\mu(d\xi) \\
    F(0)&=0\\
    \partial_t G&= 2 \alpha F G    +\frac{\beta}{\sqrt{|v|}}G + \gamma\frac{v}{|v|} +\frac{1}{|v|}\int_0^\infty \e^{\sqrt{\vert
v\vert}F\,\xi} \sin\left(\sqrt{\vert
v\vert}F\,\xi\right)\,\mu(d\xi) \\
    G(0)&=0
  \end{aligned}
\end{equation}
We now prove a first intermediary result, which is analogous to Lemma~\ref{lemminrho}:
\begin{lem}\label{lemminrhoXX}
There exists some $t_0>0$, $\rho>0$ and
$R>0$ such that
\begin{equation*}
  F(t_0,\iim v)\le -\rho
\end{equation*}
for all $v$ with $|v|\ge R$.
\end{lem}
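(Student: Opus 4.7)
The plan is to rescale once more and pass to the limit $|v|\to\infty$ in \eqref{odefgXX}, then exploit standard continuous dependence of ODE solutions on a parameter. By replacing $\psi$ by its complex conjugate I may assume $v>0$ so that $v/|v|=1$.

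First I would observe that, since $(X_t,Y_t)$ is an affine process on $\mathbb R_+^2$, the general theory forces $\psi(t,\iim v)\in\mathbb C_-$, so $F(t,\iim v)\le 0$ throughout. This sign information is essential: combined with the elementary inequality $|e^z-1|\le|z|$ valid for $\Re z\le 0$, it gives the bound
\[ \left|\tfrac{1}{|v|}\int_0^\infty\bigl(e^{\sqrt{|v|}(F+\iim G)\xi}-1\bigr)\,\mu(d\xi)\right|\le \frac{|F+\iim G|}{\sqrt{|v|}}\int_0^\infty \xi\,\mu(d\xi),\]
which is finite by assumption on $\mu$. A Gronwall argument applied to $|F+\iim G|$ then yields a uniform a priori bound $|F(t,\iim v)|+|G(t,\iim v)|\le C(T)$ on any fixed $[0,T]$, uniformly for $|v|\ge 1$.

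The key step is to show that the right-hand side of \eqref{odefgXX} converges, uniformly on bounded sets in $(F,G)$, to that of the limit complex Riccati
\[ \partial_t \psi_\infty=\alpha\psi_\infty^2+\iim\gamma,\qquad \psi_\infty(0)=0,\]
as $|v|\to\infty$. Indeed, the $\beta/\sqrt{|v|}$-terms vanish trivially, and the jump integral is $O(1/\sqrt{|v|})$ by the bound above. Standard continuous dependence of ODE solutions on a parameter then yields $F(t,\iim v)+\iim G(t,\iim v)\to\psi_\infty(t)$ uniformly on every compact $[0,T]$ inside the existence interval of $\psi_\infty$.

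Finally, a Taylor expansion at $t=0$ of the limit Riccati gives $\psi_\infty(t)=\iim\gamma t-\tfrac{\alpha\gamma^2}{3}t^3+O(t^5)$, so $\Re\psi_\infty(t_0)<0$ for every sufficiently small $t_0>0$. Fixing such a $t_0$ and setting $2\rho:=-\Re\psi_\infty(t_0)>0$, the uniform convergence above furnishes $R>0$ with $|F(t_0,\iim v)-\Re\psi_\infty(t_0)|\le\rho$ whenever $|v|\ge R$, whence $F(t_0,\iim v)\le-\rho$, as required. The main subtlety is the a priori bound on $(F,G)$: the inner exponential in the jump integral carries a potentially dangerous $\sqrt{|v|}$ amplification, and only the $\mathbb C_-$-valuedness of $\psi$ (via $|e^z-1|\le|z|$ for $\Re z\le 0$) tames it, extracting the crucial $1/\sqrt{|v|}$ factor that makes the limit Riccati appear.
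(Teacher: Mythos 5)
Your argument is correct and follows essentially the same route as the paper: pass to the limit $|v|\to\infty$ in the scaled system \eqref{odefgXX}, observe that the limiting Riccati solution has strictly negative real part at some small $t_0>0$, and transfer this back by locally uniform convergence. You in fact supply more detail than the paper does --- the a priori bound via $\Re\psi\le 0$ and $|e^z-1|\le|z|$, which justifies the locally uniform convergence the paper merely asserts --- while your Taylor expansion of $\psi_\infty$ replaces the paper's qualitative observation that $G_\infty\neq 0$ on $(0,t_1)$ forces $F_\infty(t_0)<0$.
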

\begin{proof}
For $v\to \pm\infty$, the solutions $F(t,\iim v)$ and $G(t,\iim v)$ of \eqref{odefgXX} converge locally uniformly in $t$ to the solutions $F_\infty(t)$ and $G_\infty(t)$  of the system
  \begin{align*}
    \partial_t F_\infty&=\alpha \left(F_\infty^2 - G_\infty^2\right)  \\
    F_\infty(0)&=0\\
    \partial_t G_\infty&= 2 \alpha F_\infty G_\infty    \pm 1\\
    G_\infty(0)&=0.
  \end{align*}
Since $\partial_t G_\infty(t)|_{t=0}=\pm 1$ it follows that there exists some $t_1>0$ such that $G_\infty(t)\neq 0$ for all $t\in(0,t_1)$. This again implies that $F_\infty(t_0)<0$ for some $t_0\in (0,t_1)$, and the lemma follows.
\end{proof}

From \eqref{odefgXX} we deduce the trivial differential inequality
\[ \partial F\le \alpha F^2+\frac{\beta}{\sqrt{|v|}}F .\]
Arguing as for the derivation of \eqref{Fituineq}, we then obtain together with Lemmas~\ref{lemcomp} and \ref{lemminrhoXX} that
\[ F(t,\iim v)\le  -\frac{e^{\frac{\beta}{\sqrt{|v|}}(t-t_0)}}{\sqrt{|v|}\frac{\alpha }{\beta}\left(e^{\frac{\beta}{\sqrt{|v|}}(t-t_0)}-1\right)+\frac{1}{\rho}},\quad t\ge t_0\]
for all $v\in\mathbb R$ with $|v|\ge R$. By
rescaling and integrating we infer, arguing as for the derivation of \eqref{eqpreeq}, that
 \begin{align*}
  \int_0^t \Re\psi(s,\iim v)\,ds &\le \int_{\frac{t_0}{\sqrt{|v|}}}^t \Re\psi(s,\iim v)\,ds\\
  &= -\frac{1}{\alpha}\left[ \log\left(\sqrt{|v|}\frac{\alpha}{\beta}\left(e^{\beta\left(t- \frac{t_0}{\sqrt{|v|}}\right)}-1\right)+\frac{1}{\rho} \right)-\log\left(\frac{1}{\rho}\right)\right]\\
  &=-\frac{1}{\alpha}\log\left(\rho\sqrt{|v|}\frac{\alpha }{\beta}\left(e^{\beta\left(t- \frac{t_0}{\sqrt{|v|}}\right)}-1\right)+1 \right),\quad t\ge \frac{t_0}{\sqrt{|v|}}
\end{align*}
for all $v\in\mathbb R$ with $|v|\ge R$. Similarly as in
Lemma~\ref{lemkey1} we now infer that
\[ \left|e^{\phi(t,\iim v)+\psi(t,\iim v)  x}\right|\le C \left(1+|v|\right)^{-\frac{b}{2\alpha }}. \]
Combining this with Lemma~\ref{lemdensity} completes the proof of Theorem~\ref{th: int cbi dens}.

\section{Figures and Tables}\label{app:figuresandtables}

\begin{table}[ht]
\begin{small}
\begin{center}
\begin{tabular}{r c c c r c c}
\hline \hline
\multicolumn{3}{c}{Heston Model} & & \multicolumn{3}{c}{BAJD} \\
 KS & Order 2 &  Order 4  & & KS & Order 2 &  Order 4 \\
\hline
$\kappa \theta _V$ & 0.0199& 0.6654 & &$\kappa \theta _V$ & 0.0038& 0.2360\\
$\kappa  _V$ & 0.0000 & 0.0693 & &$\kappa  _V$ & 0.0396 & 0.7658\\
$\sigma  _V$ & 0.0001 & 0.2574 & &$\sigma  _V$ & 0.0000 & 0.6754 \\
$\kappa \theta   _X$ & 0.0018 & 0.0348 & & $l   $ & 0.0000 & 0.0571\\
$\rho $ & 0.0003 & 0.3291 & &$\nu $ & 0.0000 & 0.0049\\
\hline \hline
\end{tabular}
\caption{\label{tab:ksstats}\textbf{Kolmogorov-Smirnov test statistics: } The table displays p-values for a two-sided Kolmogorov-Smirnov test applied to posterior density $p_{VX}$ to $p_{VX}^{(2)}$ and $p_{VX}^{(4)}$ using prior \eqref{eq:hestonprior} for the Heston model in the left panel. The right panel displays p-values for the test applied to $p_{Y}$ to $p_{Y}^{(2)}$ and $p_{Y}^{(4)}$, the BAJD model. The prior distribution for this model is defined in eq. \eqref{eq:ajdprior}. The true posterior is defined in eq. \eqref{eq:posteriordensity} and the approximate posterior densities are defined in eq. \eqref{eq:approxposterior}.}
\end{center}
\end{small}
\end{table}


\begin{table}[ht]
\begin{small}
\begin{center}
\begin{tabular}{c c c c c c}
\hline \hline
 & MLE & BG(4) & QML & G(4) & CF(2) \\
\hline
\#Success & 688 & 841 & 982 & 949 & 981 \\
\hline \hline
\end{tabular}
\caption{\label{tab:estimationsuccess}\textbf{Heston Estimation Success: } The table reports the number of estimation successes on 1,000 datasets generated as exact draws from the Heston model using the technology from \citet{broadiekaya06}. Estimation success is defined by the optimizer meeting the termination criterion, which is a function of the norm of the gradient of the log likelihood function. The density approximations used are BG(4), a fourth order expansion using a Bilateral Gamma weight for the log stock variable and a Gamma weight for the variance variable, G(4), a fourth order expansion using a Gaussian weight for the log stock variable  and a Gamma weight for the variance variable, QML denotes a Gaussian approximation using the true conditional moments up to order 2, and CF(2) denotes the second-order likelihood expansions from \citet{aitsahalia08}. The optimizer used in the likelihood search is \href{http://www.mathematik.tu-darmstadt.de:8080/ags/ag8/Mitglieder/spellucci_de.html}{donlp2}.}
\end{center}
\end{small}
\end{table}

\begin{landscape}
\begin{table}[ht]
\begin{small}
\begin{center}
\subfloat[Bias\label{tab:hestonbiasmore}]{\begin{tabular}{c  d | R[.][.]{1}{4} R[.][.]{1}{4} | R[.][.]{1}{4} R[.][.]{1}{4} | R[.][.]{1}{4} R[.][.]{1}{4} | R[.][.]{1}{4} R[.][.]{1}{4} | R[.][.]{1}{4} R[.][.]{1}{4}}
\hline \hline
  & \multicolumn{1}{l}{$\varrho _{VX}^{TRUE}$} &\multicolumn{2}{l}{$MLE$} &  \multicolumn{2}{l}{$BG(4)$} &  \multicolumn{2}{l}{$QML$} & \multicolumn{2}{l}{$G(4)$} & \multicolumn{2}{l}{$CF(2)$}\tabularnewline
 & & \multicolumn{1}{c}{Bias} & \multicolumn{1}{c}{RMSE} & \multicolumn{1}{c}{Bias} & \multicolumn{1}{c}{RMSE} & \multicolumn{1}{c}{Bias} & \multicolumn{1}{c}{RMSE} & \multicolumn{1}{c}{Bias} & \multicolumn{1}{c}{RMSE} & \multicolumn{1}{c}{Bias} & \multicolumn{1}{c}{RMSE}  \tabularnewline
\hline
$\kappa$ & 1 & 0.2255346834
  & 0.4253164942
 &0.2312654377
 &  0.4168232685
& 0.2324136176
 &  0.4213507663
& 0.232714891
& 0.4186912919
& 0.1904904107
& 0.5552837343
 \tabularnewline

$\kappa \theta _V$ & 0.04 & 0.0093498208
 & 0.0165699736
& 0.0094309984
 & 0.0164289394
& 0.0092745832
 & 0.0166689832
& 0.0095096777
& 0.0164442244
& 0.0063333792
& 0.0178803785
\tabularnewline

$\sigma$ & 0.2 & 0.0009221419
 &0.0049012974
 & 0.0006725087
 & 0.0046705991
& 0.0008380277
& 0.0047349522
& 0.0009062093
& 0.004724057
& 0.0044054689
& 0.0117324295
\tabularnewline

$\kappa \theta _X$ & 0.03 & -0.0151026889
 & 0.0472526603
 & -0.0145396078
 & 0.0468729585
& -0.013796895
 & 0.0482735556
& -0.0149725637
& 0.047200842
& -0.0050352531
& 0.0623627603
\tabularnewline

$\rho$ & -0.8 & -0.0015771471
  & 0.0146879927
 & -0.0000860346020760525
 & 0.0138269018
& -0.0015087734
 & 0.014047429
& -0.0016289221
& 0.0138297495
& 0.0002704118
& 0.0270464961
\tabularnewline
\hline \hline
\end{tabular}}\\
\subfloat[Estimation Noise\label{tab:hestonbias}]{\begin{tabular}{c  d | R[.][.]{1}{4} R[.][.]{1}{4} | R[.][.]{1}{4} R[.][.]{1}{4} | R[.][.]{1}{4} R[.][.]{1}{4} | R[.][.]{1}{4} R[.][.]{1}{4} | R[.][.]{1}{4} R[.][.]{1}{4}}
\hline \hline
  & \multicolumn{1}{l}{$\varrho _{VX}^{TRUE}$} &\multicolumn{2}{l}{$\widehat{\varrho}_{VX}^{\star MLE}-\varrho _{VX}^{TRUE}$} &  \multicolumn{2}{l}{$\widehat{\varrho}_{VX}^{\star BG(4)}-\widehat{\varrho}_{VX}^{\star MLE}$} &  \multicolumn{2}{l}{$\widehat{\varrho}_{VX}^{\star QML}-\widehat{\varrho}_{VX}^{\star MLE}$} & \multicolumn{2}{l}{$\widehat{\varrho}_{VX}^{\star G (4)}-\widehat{\varrho}_{VX}^{\star MLE}$} & \multicolumn{2}{l}{$\widehat{\varrho}_{VX}^{\star CF(2)}-\widehat{\varrho}_{VX}^{\star MLE}$}\tabularnewline
 & & \multicolumn{1}{c}{Mean} & \multicolumn{1}{c}{SD} & \multicolumn{1}{c}{Mean} & \multicolumn{1}{c}{SD} & \multicolumn{1}{c}{Mean} & \multicolumn{1}{c}{SD} & \multicolumn{1}{c}{Mean} & \multicolumn{1}{c}{SD} & \multicolumn{1}{c}{Mean} & \multicolumn{1}{c}{SD}  \tabularnewline
\hline
$\kappa$ & 1 & 0.225534683391003
  & 0.360906607078388
 & 0.00573075432525952
 &  0.1234558966467
& 0.00687893425605536
 &  0.131295140589824
& 0.00718020761245674
& 0.117946015583042
& -0.0350442726643599
& 0.433586908228025
 \tabularnewline

$\kappa \theta _V$ & 0.04 & 0.00934982076124568
 & 0.0136919398983686
& 0.0000811776816608995
 & 0.00244132704919124
& -0.0000752375432525953
 & 0.00427299652924237
& 0.000159856920415224
& 0.0025351734655192
& -0.00301644152249135
& 0.0092182171458277
\tabularnewline

$\sigma$ & 0.2 & 0.000922141868512093
 &0.00481793846447103
 & -0.00024963321799308
 & 0.00139823074156555
& -0.0000841141868512104
& 0.00154207181218661
& -0.0000159325259515564
& 0.00139003771775847
& 0.00348332698961938
& 0.0100159996393291
\tabularnewline

$\kappa \theta _X$ & 0.03 & -0.0151026889370242
 & 0.0448129119713036
 & 0.000563081137197231
 & 0.0120824769235282
& 0.00130579395178201
 & 0.0170556062064675
& 0.000130125219031142
& 0.0128697451701373
& 0.0100674358349481
& 0.0390403445202174
\tabularnewline

$\rho$ & -0.8 & -0.00157714705882346
  & 0.0146157216802012
 & 0.00149111245674741
 & 0.00490283446432353
& 0.0000683737024221428
 & 0.00501619421620587
& -0.0000517750865051894
& 0.00493646738507949
& 0.00184755882352941
& 0.0255258412877958
\tabularnewline
\hline \hline
\end{tabular}}
\caption{\label{tab:heston}\textbf{Heston Asymptotic Assessment: } Panel \subref{tab:hestonbiasmore} displays bias and RMSE of the MLE estimator and the approximated MLE estimators. Panel \subref{tab:hestonbias} displays mean and standard deviation of ML estimation bias as well as the first two moments of the difference between the MLE estimator and the approximated MLE estimators.  Computed over a sample of 1,000 datasets, all of which generated as exact draws from the Heston model using the technology from \citet{broadiekaya06}. For a given dataset only parameter estimates were taken into consideration where all five estimators converged. Out of 1,000 this left 578 samples. The number of estimation successes is reported in Table \ref{tab:estimationsuccess} above. Approximate estimators are obtained through BG(4), a fourth order expansion using a Bilateral Gamma weight for the log stock variable and a Gamma weight for the variance variable, G(4), a fourth order expansion using a Gaussian weight for the log stock variable  and a Gamma weight for the variance variable, QML denotes a Gaussian approximation using the true conditional moments up to order 2, and CF(2) denotes the second-order likelihood expansions from \citet{aitsahalia08}.   }
\end{center}
\end{small}
\end{table}
\end{landscape}

\begin{figure}
\begin{center}
 \subfloat[$\kappa_V  $\label{fig:hestk}]{\scalebox{0.55}{\input{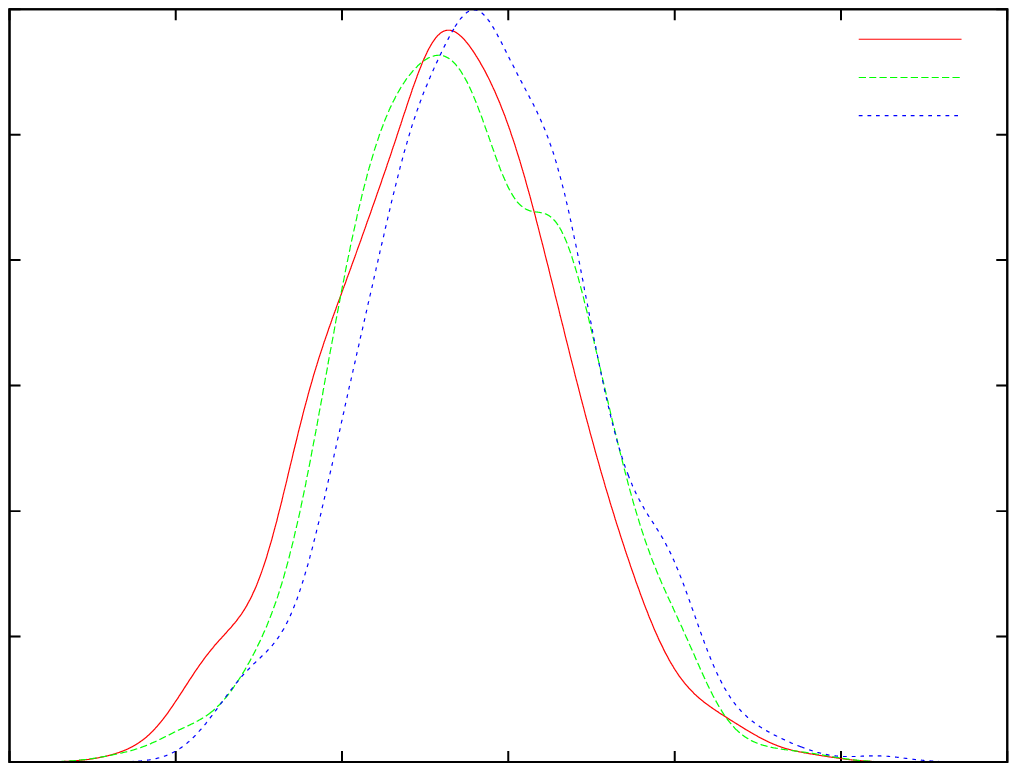}}}
 \subfloat[$\kappa \theta_V $\label{fig:hestkth}]{\scalebox{0.55}{\input{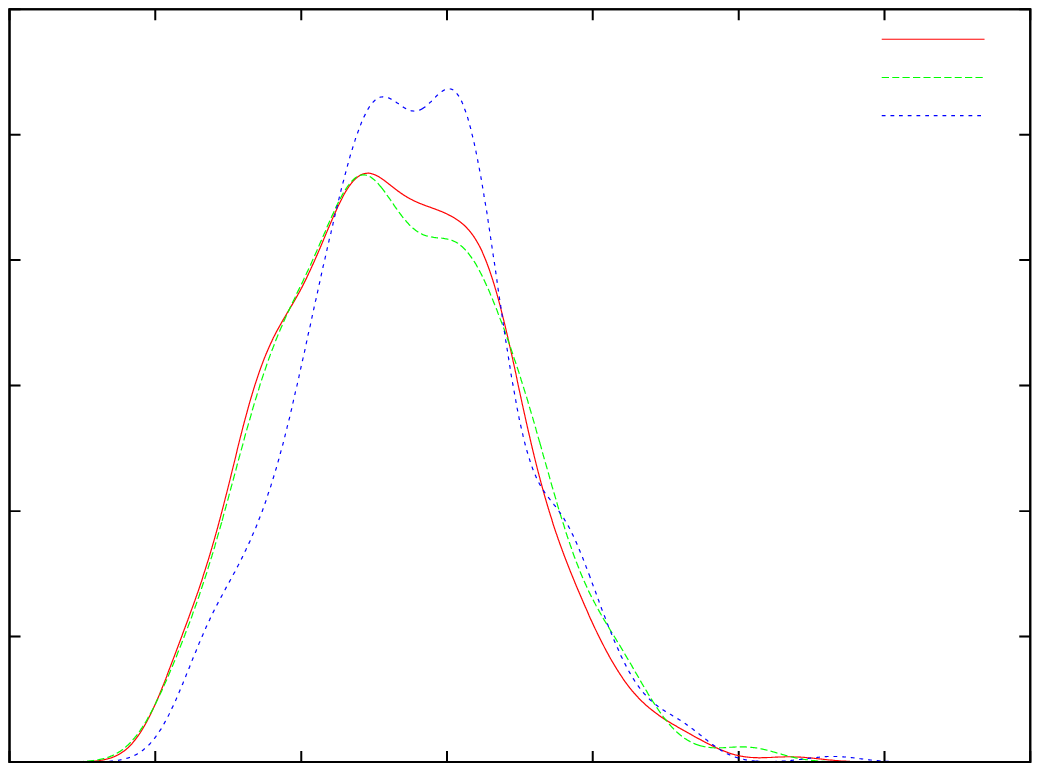}}} \\
 \subfloat[$\sigma $\label{fig:hestsig}]{\scalebox{0.55}{\input{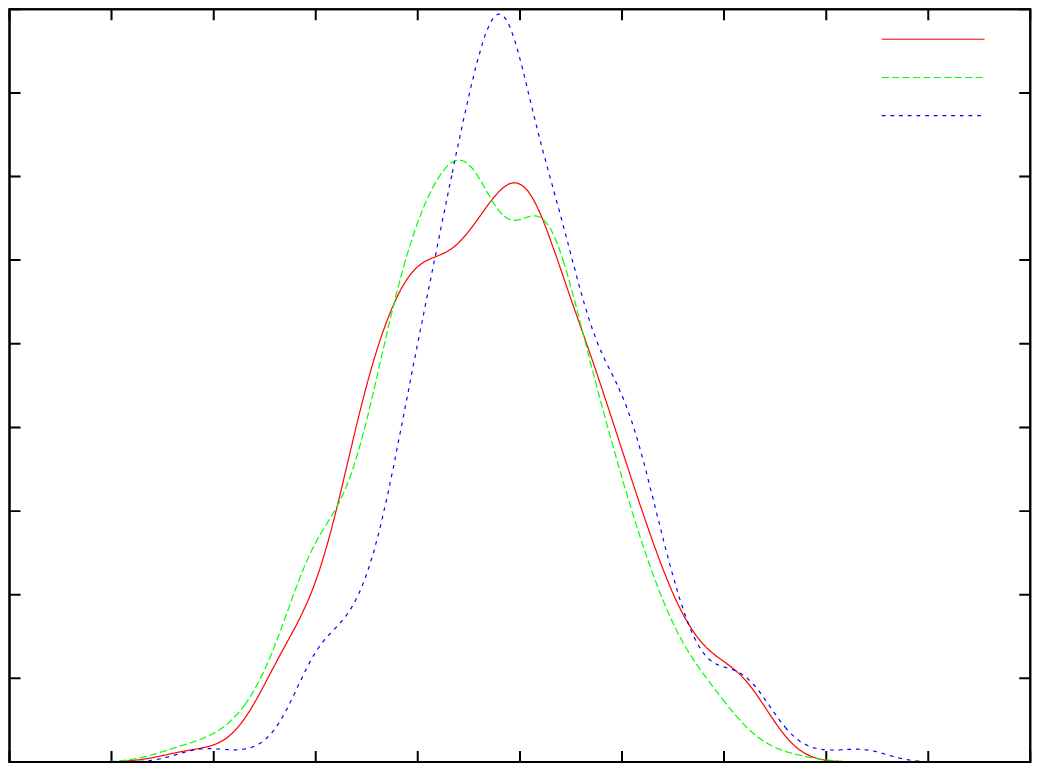}}} \subfloat[$\kappa \theta _X $\label{fig:hestkth2}]{\scalebox{0.55}{\input{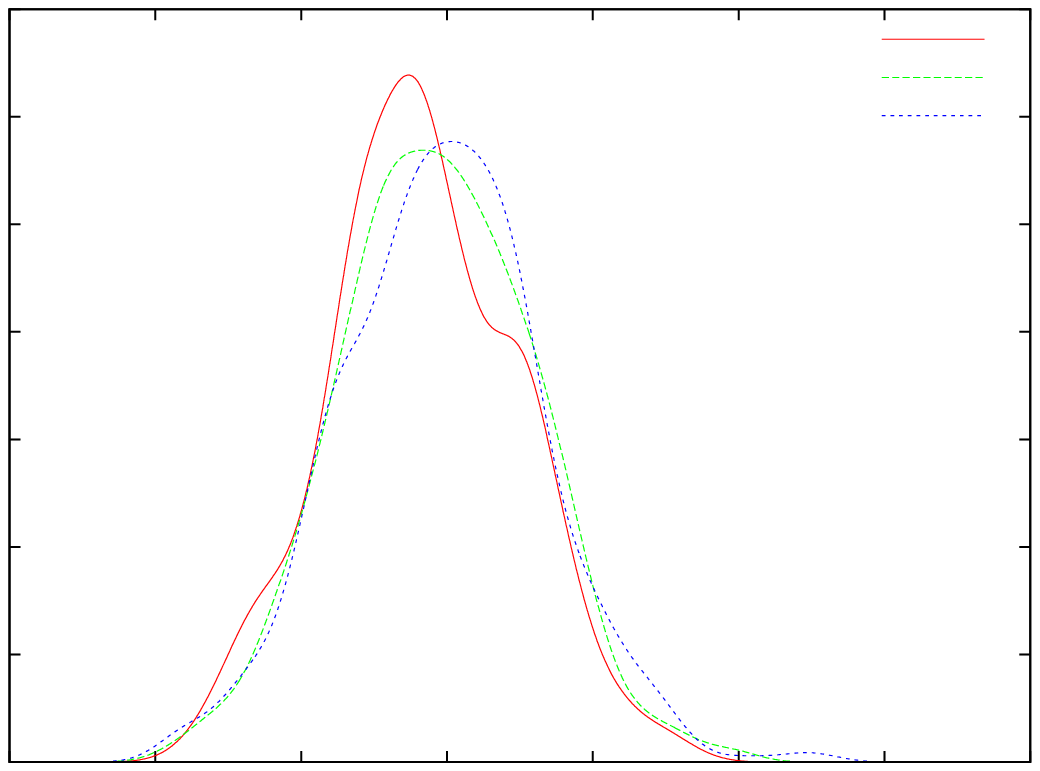}}}\\ \subfloat[$\rho \mid $\label{fig:hestrho}]{\scalebox{0.55}{\input{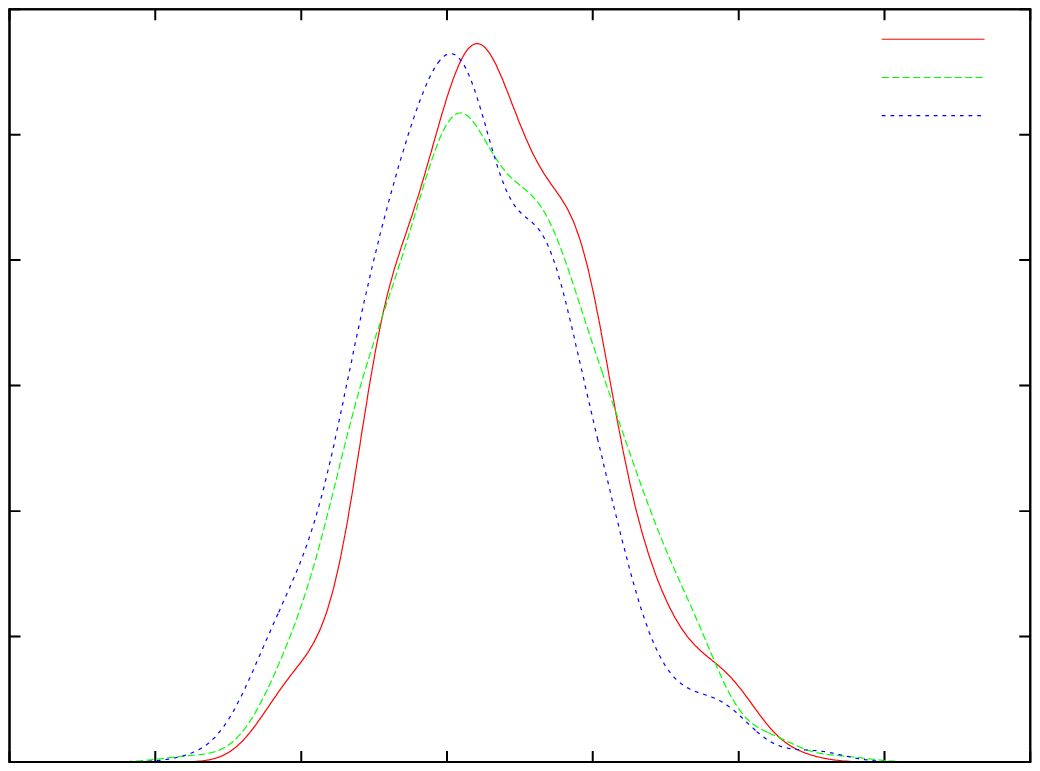}}} \caption{\label{fig:hestparms}\textbf{Posterior Densities for Heston's model: } The figure displays the marginal posterior distributions of the parameters of Heston's model conditional on the data. Bayesian estimation is performed using prior specification \eqref{eq:hestonprior} with the true transition density $g_{VX}$ obtained through Fourier inversion, closed-form density up to second ($g_{VX}^{(2)}$), and fourth  order ($g_{VX}^{(4)}$).}
\end{center}
\end{figure}

\clearpage
\end{appendix}

\nocite{AitSahaliaHansen}
\bibliographystyle{ecta}
\bibliography{../../masterbib/trunk/master}

\end{document}